\documentclass[12pt]{amsart}


\usepackage{amsfonts,amssymb,enumerate,bm,hhline,makecell,array,mathtools}
\usepackage{mathrsfs}
\usepackage{comment}
\usepackage[normalem]{ulem}
\usepackage[colorlinks=true,citecolor=blue, urlcolor=blue, linkcolor=blue, 
]{hyperref}
\usepackage{amscd}
\usepackage[shortlabels]{enumitem}
\setlist[itemize]{noitemsep,nolistsep,topsep=-3pt}
\setlist[enumerate]{noitemsep,nolistsep,topsep=-3pt}
\usepackage{tensor}
\usepackage{tikz}
\usetikzlibrary{matrix,arrows}
\usepackage{extarrows}
\usepackage[toc,page]{appendix}
\usepackage[all,ps,cmtip,rotate]{xy}
\usepackage{color}


\allowdisplaybreaks


\setlength{\parindent}{1 cm}
\setlength{\textwidth}{16.9 cm}
\setlength{\topmargin} {-1 cm}
\setlength{\evensidemargin}{0 cm}
\setlength{\oddsidemargin}{0 cm}
\setlength{\footskip}{7 mm}
\setlength{\headheight}{2 mm}
\setlength{\textheight}{23.6 cm}
\setlength{\parskip}{2 mm}



\def\cI{\mathscr{I}}

\def\cL{\mathscr{L}}
\def\cO{\mathscr{O}}

\def\cP{\mathscr{P}}

\def\cE{\mathscr{E}}

\def\cS{\mathscr{S}}
\def\cC{\mathscr{C}}
\def\cM{\mathscr{M}}

\def\cK{\mathscr{K}}

\def\cQ{\mathscr{Q}}

\def\cX{\mathscr{X}}



\def\Z{{\bf Z}}

\def\C{{\bf C}}

\def\R{{\bf R}}
\def\Q{{\bf Q}}
\def\P{{\bf P}}
\def\PP{{\bf P}}



\def\phi{\varphi}



\DeclareMathOperator{\Alb}{Alb}

\DeclareMathOperator{\Eff}{Eff}

\DeclareMathOperator{\Gr}{Gr}
\DeclareMathOperator{\Hdg}{Hdg}

\def\Im{\mathop{\rm Im}\nolimits}

\DeclareMathOperator{\Ker}{Ker}

\DeclareMathOperator{\Mov}{\overline{Mov}}

\DeclareMathOperator{\Nef}{Nef}

\DeclareMathOperator{\NS}{NS}
\DeclareMathOperator{\num}{\underset{\mathrm num}{\equiv}}

\def\otp{}

\DeclareMathOperator{\Pos}{\overline{Pos}}
\DeclareMathOperator{\Psef}{Psef}

\DeclareMathOperator{\rk}{rk}

\DeclareMathOperator{\Sym}{Sym}

\DeclareMathOperator{\td}{td}


\def\ot{\otimes}
\def\op{\oplus}

\def\isom{\simeq}
\def\cong{\isom}


\def\lra{\longrightarrow}
\def\llra{\hbox to 10mm{\rightarrowfill}}
\def\lllra{\hbox to 15mm{\rightarrowfill}}

\def\llla{\hbox to 10mm{\leftarrowfill}}
\def\lllla{\hbox to 15mm{\leftarrowfill}}

\def\dra{\dashrightarrow}

\def\lhra{\ensuremath{\lhook\joinrel\relbar\joinrel\to}}

\DeclareMathOperator{\isomto}{\stackrel{{}_{\scriptstyle\sim}}{\to}}

\DeclareMathOperator{\isomdra}{\stackrel{{}_{\scriptstyle\sim}}{\dra}}
\DeclareMathOperator{\isomlra}{\stackrel{{}_{\scriptstyle\sim}}{\lra}}


\def\HK{Hyper-K\"ahler}

\def\hk{hyper-K\"ahler}


\newtheorem{lemm}{Lemma}[section]
\newtheorem{theo}[lemm]{Theorem}
\newtheorem{coro}[lemm]{Corollary}
\newtheorem{prop}[lemm]{Proposition}
\newtheorem{claim}[lemm]{Claim}

\theoremstyle{remark}

\newtheorem{rema}[lemm]{Remark}
\newtheorem{conj}[lemm]{Conjecture}

\newtheorem{exam}[lemm]{Example}




\makeatletter
\@namedef{subjclassname@2020}{%
  \textup{2020} Mathematics Subject Classification}
\makeatother


\makeatletter
\def\@tocline#1#2#3#4#5#6#7{\relax
  \ifnum #1>\c@tocdepth 
  \else
    \par \addpenalty\@secpenalty\addvspace{#2}%
    \begingroup \hyphenpenalty\@M
    \@ifempty{#4}{%
      \@tempdima\csname r@tocindent\number#1\endcsname\relax
    }{%
      \@tempdima#4\relax
    }%
    \parindent\z@ \leftskip#3\relax \advance\leftskip\@tempdima\relax
    \rightskip\@pnumwidth plus4em \parfillskip-\@pnumwidth
    #5\leavevmode\hskip-\@tempdima
      \ifcase #1
       \or\or \hskip 1em \or \hskip 2em \else \hskip 3em \fi%
      #6\nobreak\relax
    \dotfill\hbox to\@pnumwidth{\@tocpagenum{#7}}\par
    \nobreak
    \endgroup
  \fi}
\makeatother

\setcounter{tocdepth}{1}


\def\hh{\ensuremath{\mathsf h}}
\def\lll{\ensuremath{\mathsf l}}
\def\eee{\ensuremath{\mathsf e}}
\def\fff{\ensuremath{\mathsf f}}
\def\mm{\ensuremath{\mathsf m}}
\def\nn{\ensuremath{\mathsf n}}

\def\M{\ensuremath{\mathsf M}}
\def\K{\ensuremath{\mathsf K}}


\title[Hyper-K\"ahler fourfolds]{Computing Riemann--Roch polynomials and classifying  hyper-K\"ahler fourfolds}

\begin{document}

\author[O.~Debarre]{Olivier Debarre}
\address{Universit\'{e}  Paris Cit\'e and  Sorbonne Universit\'{e}, CNRS, IMJ-PRG, F-75013 Paris, France\vspace{-1mm}}
\email{{olivier.debarre@imj-prg.fr}}

\author[D.~Huybrechts]{Daniel Huybrechts}
\address{Universit\"at Bonn, Mathematisches Institut and Hausdorff Center for Mathematics, Endenicher Allee 60, 53115 Bonn, Germany\vspace{-1mm}}
\email{{huybrech@math.uni-bonn.de}}

\author[E.~Macr\`i]{Emanuele Macr\`i}
\address{Universit\'e Paris-Saclay, CNRS, Laboratoire de Math\'ematiques d'Orsay, Rue Michel Magat, B\^at. 307, 91405 Orsay, France\vspace{-1mm}}
\email{{emanuele.macri@universite-paris-saclay.fr}}

\author[C.~Voisin]{Claire Voisin}
\address{Sorbonne Universit\'{e} and Universit\'{e}  Paris Cit\'e, CNRS, IMJ-PRG, F-75005 Paris, France\vspace{-1mm}}
\email{{claire.voisin@imj-prg.fr}}

\subjclass[2020]{14C20, 14J35, 14J42, 14J60}
\keywords{Hyper-K\"ahler manifolds, Lagrangian fibrations, moduli spaces, fourfolds}
\thanks{This project has received funding from the European
Research Council (ERC) under the European
Union's Horizon 2020 research and innovation
programme (ERC-2020-SyG-854361-HyperK)}

\begin{abstract}
We prove that a hyper-K\"ahler fourfold satisfying a mild topological assumption is of K3$^{[2]}$ deformation type.\
This proves in particular a conjecture of O'Grady stating that hyper-K\"ahler fourfolds of K3$^{[2]}$ numerical type are of K3$^{[2]}$ deformation type.\
Our topological assumption concerns the existence of two integral degree-2 cohomology classes satisfying certain numerical intersection conditions.

There are two main ingredients in the proof.\
We first prove a topological version of the statement, by showing that our topological assumption forces the Betti numbers, the Fujiki constant, and the Huybrechts--Riemann--Roch polynomial of the hyper-K\"ahler fourfold to be the same as those of K3$^{[2]}$ hyper-K\"ahler fourfolds.\
The key part of the article is then to prove the hyper-K\"ahler SYZ conjecture for hyper-K\"ahler fourfolds for divisor classes satisfying the numerical condition mentioned above.
\end{abstract}

\maketitle

\tableofcontents
\setcounter{tocdepth}{1}

\section{Introduction}\label{sec:intro}

We work over the complex numbers.\
A \hk\ manifold is a simply connected smooth compact K\"ahler manifold which carries a nowhere degenerate holomorphic 2-form (called a symplectic form) unique up to multiplication by a nonzero  constant.\
This important class of manifolds is a generalization in all even dimensions of K3 surfaces.

There are two key conjectures about \hk\ manifolds.

\begin{conj}\label{conj:defoSYZ}
Any \hk\ manifold can be deformed into a \hk\ manifold with a Lagrangian fibration.
\end{conj}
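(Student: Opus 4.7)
\medskip

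\noindent\textbf{Proof proposal for Conjecture \ref{conj:defoSYZ}.}
The plan is to reduce the deformation-theoretic statement to the (non-deformation) \emph{hyper-K\"ahler SYZ conjecture} and then to attack the latter via positivity techniques. Let $X$ be a \hKm\ and $q$ the Beauville--Bogomolov--Fujiki form on $H^2(X,\Z)$. The first step is Hodge-theoretic and is by now standard: using the surjectivity and local Torelli theorems for the period map, one shows that inside the moduli space of marked deformations of $X$ one can find (in fact densely) a deformation $X'$ whose Picard group contains a class $L$ with $q(L)=0$. By further varying $L$ in the real positive cone and applying the projectivity/K\"ahler criterion of Huybrechts, one arranges moreover that $L$ is nef on $X'$, with $q(L)=0$. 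The content of the conjecture is therefore concentrated in the assertion: \emph{a nef line bundle $L$ on a \hKm\ $X'$ with $q(L)=0$ is semi-ample, and $|mL|$ for $m\gg 0$ defines a Lagrangian fibration $X'\to\mathbb{P}^n$}.

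Next I would record the two ``free'' consequences that the SYZ statement already gives once semi-ampleness is known. The Fujiki relation forces $c_1(L)^n=0$ in degree $2n$ and $c_1(L)^{n+1}=0$, while $c_1(L)^n\neq 0$ as a $(n,n)$-class (because $q(L,\alpha)\neq 0$ for some K\"ahler $\alpha$). Thus any morphism $\phi:X'\to B$ defined by some power of $L$ has image of dimension $n=\tfrac12 \dim X'$, and by Matsushita's theorem $\phi$ is automatically a Lagrangian fibration with $B\cong\mathbb{P}^n$ (assuming smoothness/normality of $B$, which follows from the known structure results). So the entire problem is pushed onto proving semi-ampleness of $L$.

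The main obstacle, and the step I expect to be genuinely hard, is this semi-ampleness. I would approach it through the Abundance/base-point-free philosophy adapted to the \hk\ setting:
\begin{enumerate}
\item[(i)] Show $h^0(X',mL)\to\infty$, of order $m^n$, using Riemann--Roch together with a vanishing statement for $H^i(X',mL)$, $i>0$. The Riemann--Roch polynomial for \hk\ manifolds, computed by Huybrechts from the Fujiki constants, predicts exactly the expected leading term $\chi(X',mL)\sim c\cdot m^n$.
\item[(ii)] Upgrade the existence of sections to base-point freeness. Here one would use Kawamata-type base-point-free theorems on \hk\ manifolds: since $L$ is nef isotropic and $K_{X'}$ is trivial, any effective base locus of $|mL|$ is constrained by the symplectic form; the isotropy $q(L)=0$ should prevent a divisorial base locus, and the contraction theory of extremal rays (Hassett--Tschinkel, Bayer--Macr\`i, Mongardi, etc.) for \hk\ manifolds should exclude small base loci for $m$ sufficiently divisible.
\item[(iii)] Identify the image of $\phi_{|mL|}$ with $\mathbb{P}^n$ and conclude via Matsushita.
\end{enumerate}

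The genuine bottleneck is (ii): passing from ``$L$ is nef isotropic'' to ``$L$ is semi-ample'' is the hyper-K\"ahler SYZ conjecture itself, open in general. A realistic strategy is therefore \emph{to use the extra deformation freedom we have}: rather than proving semi-ampleness for a fixed $X'$, one may further deform $(X',L)$ within the Noether--Lefschetz locus $\{q(L)=0\}$, choosing the deformation so that $\rho(X')=1$ (or $L$ is the unique integral isotropic class, up to scale) and applying the cone/contraction theorem of Bayer--Hassett--Tschinkel--Mongardi, which in low rank classifies exceptional contractions and rules out obstructions to semi-ampleness for such minimal Picard rank. In dimension four this is precisely the route by which partial cases are known, and it is the method that the present paper extends via the SYZ statement announced in the abstract.
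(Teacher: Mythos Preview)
The statement is a \emph{conjecture}, and the paper does not prove it in general. The paragraph immediately following Conjecture~\ref{conj:defoSYZ} gives exactly the reduction you sketch in your first paragraph: assuming a nonzero class $\lll\in H^2(X,\Z)$ with $\int_X\lll^{2n}=0$ exists, deform $X$ so that $\lll$ is a nef $(1,1)$-class on a deformation of Picard number one, and then invoke the SYZ conjecture (Conjecture~\ref{conj:SYZ}). One point you gloss over: the existence of such an isotropic integral class is not automatic---the paper invokes Meyer's theorem, which only guarantees it when $b_2(X)\geq 5$.

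Beyond that reduction, your proposal is not a proof, and it contains a concrete error. In step~(i) you claim $\chi(X',mL)\sim c\, m^n$; but since $q_X(mL)=m^2q_X(L)=0$, the Huybrechts--Riemann--Roch formula gives $\chi(X',mL)=P_{RR,X}(0)=n+1$ for every $m$, so the Euler characteristic provides no growth whatsoever, and there is no general vanishing theorem for $H^i(X',mL)$ with $L$ merely nef and isotropic. Step~(ii) you yourself identify as the open SYZ conjecture, so nothing is proved there either. The paper's actual contribution is to establish SYZ in dimension~$4$ only under the additional hypothesis that some $\mm\in H^2(X,\Z)$ satisfies $\int_X\lll^2\mm^2=2$ (Theorem~\ref{thm:SYZ}); the argument is not a positivity/base-point-free scheme of the kind you outline but a detailed case analysis (Sections~\ref{sec:SYZ}--\ref{sec:OGrady}) that ultimately identifies $X$ with an explicit moduli space on a K3 surface.
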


Let $2n$ be the dimension of $X$ and assume that there exists a nonzero class $\lll\in H^2(X,\Z)$ such that $\int_X\lll^{2n}=0$.\
One can deform $X$ so that $\lll$ becomes of type~$(1,1)$, hence is the first Chern class of some nontrivial holomorphic line bundle $L$ on $X$.\
For a very general deformation of this kind, $X$ has Picard number one.\
In this case, its K\"ahler cone coincides with its positive cone (\cite{HuyHK}, \cite[Proposition 3.2]{huyKahler}) and, therefore, $L$ or its dual is nef.\
Conjecture~\ref{conj:defoSYZ} is then implied by the following.

\begin{conj}[\hk\ SYZ conjecture]\label{conj:SYZ}
Let $X$ be a \hk\ manifold of dimension~$2n$.\
Any nontrivial nef line bundle $L$ on $X$ such that $\int_X c_1(L)^{2n}=0$ is semi-ample.
\end{conj}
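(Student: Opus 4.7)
The plan is to decompose the proof of semi-ampleness into three pieces: a numerical dimension computation, a non-vanishing statement, and a base-point-free argument, concluding via Matsushita's theorem.

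First, Fujiki's relation $\int_X c_1(L)^{2n} = c_X\cdot q(L)^n$ translates the hypothesis into $q(L) = 0$, so $L$ is isotropic for the Beauville--Bogomolov--Fujiki form. Polarising Fujiki's formula and using the nefness of $L$ together with positivity of $q$ on the positive cone, one checks that $\int_X c_1(L)^n\cdot c_1(H)^n > 0$ while $\int_X c_1(L)^{n+k}\cdot c_1(H)^{n-k} = 0$ for $1 \leq k \leq n$ and any ample $H$. Hence the numerical dimension of $L$ is exactly $n$, the expected dimension of the base of a Lagrangian fibration.

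The heart of the approach is a non-vanishing result: show $h^0(X, L^{\otimes m}) \geq 2$ for some $m > 0$. By Huybrechts--Riemann--Roch on a \hk\ manifold, $\chi(X, L^{\otimes m}) = \sum_{k=0}^n a_k m^{2k} q(L)^k$ for constants $a_k$ depending only on $X$; the hypothesis $q(L) = 0$ collapses this sum to $\chi(X, L^{\otimes m}) = a_0 = \chi(X, \cO_X) = n+1 > 0$. The difficulty is that $L$ is isotropic, hence not big, so Kawamata--Viehweg vanishing does not directly control the higher cohomology. A natural strategy is to deform $X$ to a \hk\ manifold on which $L$ becomes the pullback of an ample line bundle from a lower-dimensional variety (a K3 surface in the fourfold case), and transport sections back via deformation invariance of $\chi$ and semi-continuity.

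Given enough sections, one would use the nefness of $L$, the triviality of $K_X$, and the numerical dimension being $n$ in a base-point-free-theorem style argument to produce a morphism $f\colon X \to B$ with $\dim B = n$ and $L \sim f^*A$ for some ample $A$ on $B$; Matsushita's theorem then identifies $f$ as a Lagrangian fibration and $L$ as semi-ample. The principal obstacle is the non-vanishing step: it is an instance of the abundance problem for holomorphic symplectic varieties and is not accessible via general vanishing theorems. In the fourfold case, the topological refinement established in the first half of the paper --- pinning down the Betti numbers, Fujiki constant, and HRR polynomial to those of the K3$^{[2]}$ type --- is precisely what makes this non-vanishing, and hence the SYZ statement, tractable.
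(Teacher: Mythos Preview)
The statement you are attempting to prove is labelled \emph{Conjecture} in the paper, and for good reason: it is open in general, and the paper does not claim a proof of it.\ What the paper establishes are special cases --- Theorem~\ref{thm:SYZKnownExs} for the known deformation types (by quoting Matsushita, Markman, Mongardi--Onorati, Mongardi--Rapagnetta) and Theorem~\ref{thm:SYZ} for fourfolds admitting the auxiliary class $\mm$ with $\int_X\lll^2\mm^2=2$.\ Your outline is therefore not a proof of the stated conjecture, and you yourself flag the genuine gap: the non-vanishing step is an instance of abundance and is not known in general.

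Two more specific issues.\ First, your suggested route to non-vanishing --- deform $X$ so that $L$ becomes a pullback from a lower-dimensional base, then transport sections back --- is circular as stated: producing such a deformation is essentially the SYZ conjecture on the deformed manifold, and upper semi-continuity of $h^0$ goes the wrong way.\ The paper does use a deformation/semi-continuity argument (proof of Proposition~\ref{prop:CaseC2} and Corollary~\ref{cor:SYZ}), but it works by first proving vanishing of $h^2$ and $h^4$ for $M^{\otimes 2}$ on a specific model, spreading that vanishing by semi-continuity, and then reading off $h^0\ge 3$ from Riemann--Roch; this avoids the circularity.\ Second, your base-point-free step ``given enough sections'' is not automatic in higher dimensions: in the fourfold case the paper invokes Fukuda's result that $h^0(X,L^{\otimes k})\ge 2$ already forces semi-ampleness, a fact specific to $n=2$ (see Section~\ref{subsec:Lagrangian4folds}).\ In arbitrary dimension one needs $\kappa(X,L)=\nu(X,L)=n$ before Kawamata's theorem applies, and getting $\kappa=n$ from mere non-vanishing is again the open point.
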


The existence of a class $\lll$ as above is equivalent to the existence of a nonzero class in~$H^2(X,\Q)$ that is isotropic with respect to the Beauville--Bogomolov form $q_X$ (see equation~\eqref{fuj}).\
By Meyer's theorem, such isotropic classes exist when \mbox{$b_2(X)\geq 5$.}\

A line bundle $L$ as in Conjecture~\ref{conj:SYZ} would then define a fibration $f\colon X\to B$, with~$B$ normal.\
By results of Matsushita, $B$ is projective, $f$ is a Lagrangian hence equidimensional fibration, and any smooth fiber $X_b\coloneqq f^{-1}(b)$ is an abelian variety, endowed  with a canonical polarization which is the positive generator of the saturation of the image of the rank-1 restriction map $H^2(X,\Z)\to H^2(X_b,\Z)$ (see Section~\ref{sec:LagrangianFibration}).\ 

It is conjectured that the base $B$ of any Lagrangian fibration is  smooth, and in fact $\P^n$; note that $B$ is smooth if and only if $f$ is flat.\ 
When~$X$ is projective and $B$ is smooth, it was proved by Hwang in \cite{hw} that $B$ is $\P^n$; this was extended to the non-projective case by Greb and Lehn in \cite{gl}.\ When~$X$ is projective and~$n=2$, results of Ou in \cite{ou} and  Huybrechts and Xu  in \cite{hx} imply that $B$ is always $\P^2$.\ 

Both conjectures are true  for all known examples of \hk\ manifolds: for Conjecture~\ref{conj:defoSYZ}, see Section~\ref{subsec:SYZKnownExs}; the following stronger form of  Conjecture~\ref{conj:SYZ} follows from work of Matsushita and others.

\begin{theo}\label{thm:SYZKnownExs}
Let $X$ be a \hk\ manifold of dimension $2n$ of either $\mathrm{K3}^{[n]}$,  generalized Kummer,  $\mathrm{OG10}$, or $\mathrm{OG6}$ deformation type.\
Let $L$ be a   nef line bundle on $X$ such that $\int_X c_1(L)^{2n}=0$ and $c_1(L)$ is primitive in $H^2(X,\Z)$.\
There exists a Lagrangian fibration $f\colon X\to \P^n$ such that $f^*\cO_{\P^n}(1)\cong L$.
\end{theo}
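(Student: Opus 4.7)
The plan is to treat each of the four deformation types by reducing, via deformation theory, to an explicit moduli-theoretic model in which the desired Lagrangian fibration can be constructed by hand. The common thread is a three-step argument: (i) identify a distinguished representative in the deformation type for which a Lagrangian fibration with a primitive isotropic polarization is known to exist; (ii) connect $(X,L)$ to this representative through a smooth family of polarized \hk{} manifolds; (iii) transfer semi-ampleness, and the identification of the base with $\P^n$, back to $(X,L)$.

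For the $\mathrm{K3}^{[n]}$ type, I would argue as follows. By Markman's description of the monodromy group acting on $H^2(X,\Z)$ together with the Eichler criterion, primitive isotropic vectors in the Mukai-type lattice form a single monodromy orbit. Combining this with Verbitsky's Torelli theorem and standard deformation theory of polarized \hk{} manifolds, one can connect $(X,L)$ to a pair $(M_H(v),L_v)$, where $M_H(v)$ is a smooth projective moduli space of $H$-stable sheaves on a projective K3 surface $S$ with primitive isotropic Mukai vector $v=(0,c,s)$, $c$ big and nef, and $L_v$ is the Donaldson--Mukai line bundle associated to the class of a point. On this distinguished model, the Fitting support map $M_H(v)\to |c|\cong \P^n$ is a Lagrangian fibration whose pullback of $\cO_{\P^n}(1)$ equals $L_v$. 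Matsushita's deformation theorem for Lagrangian fibrations in families, combined with the Greb--Lehn theorem identifying the base with $\P^n$, then propagates the fibration structure through the family back to $(X,L)$. For the generalized Kummer type the same strategy applies, using Yoshioka's moduli spaces of stable sheaves on an abelian surface $A$ with a primitive isotropic Mukai vector and composing the support map with the Albanese fibre to extract the Lagrangian fibration.

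For OG10 and OG6 one uses instead O'Grady's (and Lehn--Sorger's) symplectic resolutions of moduli spaces of sheaves on a K3 surface, respectively an abelian surface, with non-primitive Mukai vector $2v_0$ for $v_0$ primitive isotropic; the support map descends through the symplectic resolution and gives the Lagrangian fibration, while the monodromy results of Mongardi--Onorati (respectively Mongardi--Rapagnetta) provide the single-orbit statement required in step (i). The main obstacle is step (iii): one must show that the existence of a Lagrangian fibration with a prescribed primitive isotropic polarization is preserved in a family of polarized \hk{} manifolds. This is where Matsushita's results enter crucially, combined with the fact that, along a smooth family of polarized pairs with $c_1(L_t)^{2n}=0$, nefness is a closed condition on the central fibre and semi-ampleness specializes well once it is known on a dense subset of the family. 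The moduli-theoretic models of step (i) supply exactly such a dense subset, and this closes the argument.
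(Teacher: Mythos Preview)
Your strategy is in the right circle of ideas, and you correctly identify the relevant references (Markman and Yoshioka for $\mathrm{K3}^{[n]}$ and Kummer types, Mongardi--Onorati for OG10, Mongardi--Rapagnetta for OG6). However, the route you take is different from the paper's, and your step~(iii) has a genuine gap.

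The paper's argument is \emph{birational}, not deformational. It invokes \cite{mat:isotropic} directly: first, \cite[Corollary~1.1]{mat:isotropic} (together with \cite{bm,marlag,yos} for $\mathrm{K3}^{[n]}$ and Kummer types, \cite[Theorem~2.2]{mo} for OG10, and \cite[Theorem~7.2]{mora} for OG6) establishes the \emph{rational} version of the statement, namely that some birational \hk\ model $X'$ of $X$ carries a Lagrangian fibration $f'\colon X'\to\P^n$ with $f'^*\cO_{\P^n}(1)$ corresponding to $L$; then \cite[Claim~3.1 and Claim~3.2]{mat:isotropic} show that, once $L$ is actually nef on $X$, the rational map $X\dra\P^n$ is already a morphism and $f^*\cO_{\P^n}(1)\cong L$. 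No family is needed; the whole argument takes place on $X$ and its birational models.

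Your approach instead tries to \emph{deform} $(X,L)$ to an explicit moduli-theoretic model and then ``propagate the fibration structure back.'' The obstruction is that Matsushita's deformation result for Lagrangian fibrations \cite{matlag} is an \emph{openness} statement: if $(\cX_{t_0},\cL_{t_0})$ admits a Lagrangian fibration, then so do nearby fibres, not arbitrary ones. To reach the specific point $(X,L)$ you would need a specialization argument. The paper itself carries out exactly such a specialization argument in the proof of Corollary~\ref{cor:SYZ}, and it is instructive to see what it yields: one gets only $h^0(X,L^k)\ge 2$ for some $k>0$, hence $f^*\cO_{\P^2}(1)\cong L^{k_L}$ with no control on $k_L$. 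Upper semi-continuity of $h^0$ does not give global generation of $L$ at the limit, and along the path the line bundles $\cL_t$ need not remain nef (the nef cone jumps at points of higher Picard rank), so you cannot simply stay inside the open locus where the fibration exists. The passage from ``some power of $L$ is semi-ample'' to ``$L$ itself satisfies $f^*\cO_{\P^n}(1)\cong L$'' is precisely what \cite[Claims~3.1 and~3.2]{mat:isotropic} supply, and this is missing from your outline.
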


Indeed, a ``rational version'' of Conjecture~\ref{conj:SYZ}, namely \cite[Conjecture 1.1]{mat:isotropic}, is known for a  movable line bundle $L$ with primitive isotropic first Chern class by \cite[Corollary 1.1]{mat:isotropic} in the case of $\mathrm{K3}^{[n]}$ or generalized Kummer deformation type (based on \cite{bm,marlag,yos}), by \cite[Theorem 2.2]{mo} in the case of OG10 deformation type, and by \cite[Theorem 7.2]{mora} in the case of OG6 deformation type.\
When $L$ is nef,  we can apply \cite[Claim 3.1 and Claim 3.2]{mat:isotropic} to conclude.

One of our main results is that both conjectures are true in dimension 4 under an additional topological assumption that we now explain.\ We introduce another class $\mm\in H^2(X,\Z)$ with $q_X(\lll,\mm)>0$.\ 
The number
\begin{equation}\label{defaintro}
    a\coloneqq\frac{1}{n!}\int_X\lll^n\mm^n
\end{equation}
is then a positive integer (see Lemma~\ref{lemmaa}).\ 
For most of this article, {\em we make the assumption $a=1$, that is, $\int_X\lll^n\mm^n=n!$} (the minimal possible value).

Assume  $\mm$ is the first Chern class of  a line bundle $M$ on $X$.\
When there is a Lagrangian fibration $f\colon X\to \P^n$ and $L=f^*\cO_{\P^n}(1)$, the restriction of $M$ to any smooth fiber is  a polarization of ``degree''~$a$.\
So the condition $a=1$ means that these  polarizations are  principal.\
This set up was the starting point of this work.\
We prove in Section~\ref{sec:LagrangianFibration}  that upon replacing $\mm$ by $\mm+r\lll$, for a suitable integer~$r$,  we can assume $q_X(\lll,\mm)=1$ and $q_X(\mm)=0$.\
By mimicking the work \cite{ort} of R\'ios Ortiz, we find that the  Huybrechts--Riemann--Roch polynomial~$P_{RR,X}$ (see  Section~\ref{subsec:RRpolynomial} for the definition) then takes the very simple form
\begin{equation}\label{eqRRintro}
\forall k\in\Z\qquad P_{RR,X}(2k)=P_{RR,X}(q_X(k\lll+\mm))=\chi(X,L^k\ot M)=\chi(\P^n,\cO_{\P^n}(k+1))
\end{equation}
(see Theorem~\ref{th21} for more properties of~$X$).

We are naturally led to asking whether this conclusion still holds without assuming the existence of the Lagrangian fibration~$f$.

\begin{conj}\label{conjnum}
Let $X$ be a \hk\ manifold of dimension $2n$ with  classes $\lll,\mm\in H^2(X,\Z)$ such that
\[
\int_X\lll^{2n}=0\quad\textnormal{and}\quad\int_X\lll^n\mm^n=n!.
\]
Then the Huybrechts--Riemann--Roch polynomial of $X$  satisfies
$$\forall k\in\Z\qquad P_{RR,X}(2k)=\chi(\P^n,\cO_{\P^n}(k+1))=\binom{k+1+n}{n}.$$
\end{conj}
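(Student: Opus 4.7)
My plan is to reduce Conjecture~\ref{conjnum} to the hyper-K\"ahler SYZ conjecture (Conjecture~\ref{conj:SYZ}) for the class $\lll$, combined with R\'ios Ortiz's computation of Euler characteristics along a Lagrangian fibration whose smooth fibers are principally polarized. Since the Huybrechts--Riemann--Roch polynomial is a deformation invariant of $X$, it suffices to establish the identity on any hyper-K\"ahler deformation of $X$ on which both $\lll$ and $\mm$ are realized as line bundles compatible with a Lagrangian fibration.

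First I would normalize as in Section~\ref{sec:LagrangianFibration}: replacing $\mm$ by $\mm+r\lll$ for a suitable $r\in\Z$, I may assume
\[
q_X(\lll)=q_X(\mm)=0,\qquad q_X(\lll,\mm)=1,
\]
so that $q_X(k\lll+\mm)=2k$ for every $k\in\Z$ and the target identity becomes $P_{RR,X}(q_X(k\lll+\mm))=\binom{k+1+n}{n}$. Next, using the surjectivity of the period map, I would deform $X$ to a hyper-K\"ahler manifold $X'$ on which both $\lll$ and $\mm$ are $(1,1)$-classes, hence first Chern classes of line bundles $L$ and $M$, and on which the K\"ahler-cone-equals-positive-cone description recalled in the excerpt forces $L$ (or its dual) to be nef. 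Granting the SYZ conjecture for $L$, it is semi-ample and induces a Lagrangian fibration $f\colon X'\to\P^n$ (using \cite{hw,gl} for the identification of the base). Because $a=1$, the restriction of $M$ to each smooth fiber is a principal polarization on an abelian $n$-fold, so $R^if_*M=0$ for $i>0$ and $f_*M$ is a line bundle on $\P^n$; its degree is pinned down by $\chi(X',M)=\chi(X',\cO_{X'})=n+1=\chi(\P^n,\cO_{\P^n}(1))$, giving $f_*M\cong\cO_{\P^n}(1)$. The projection formula and $L=f^*\cO_{\P^n}(1)$ then yield, for every $k\in\Z$,
\[
\chi(X',L^k\otimes M)=\chi(\P^n,\cO_{\P^n}(k+1))=\binom{k+1+n}{n},
\]
and since the left-hand side equals $P_{RR,X'}(2k)=P_{RR,X}(2k)$, this proves the conjecture.

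The only nontrivial input is the SYZ conjecture for $\lll$, and this is where I expect the main obstacle to lie: SYZ is open beyond the known deformation types, which is precisely why Conjecture~\ref{conjnum} is stated as a conjecture rather than a theorem. The central technical work of the present paper is to establish SYZ for exactly such divisor classes in dimension $2n=4$, which then upgrades the scheme above to an unconditional proof of Conjecture~\ref{conjnum} for hyper-K\"ahler fourfolds.
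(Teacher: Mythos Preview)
Your reduction has a genuine gap that the paper itself flags explicitly (in the paragraph immediately following Conjecture~\ref{conjnum}): the SYZ conjecture only predicts that \emph{some positive power} $L^{k_L}$ is globally generated and induces a Lagrangian fibration, not that $L$ itself does. Your computation $f_*M\cong\cO_{\P^n}(1)$ and the projection-formula step both require $L=f^*\cO_{\P^n}(1)$; if instead $L^{k_L}=f^*\cO_{\P^n}(1)$ with $k_L>1$, then the class inducing the fibration is $k_L\lll$, for which $\int_X(k_L\lll)^n\mm^n=k_L^n\, n!$, so $a=k_L^n\neq1$ and the principal-polarization argument no longer applies. This is precisely why the paper says Conjecture~\ref{conjnum} ``is not strictly speaking implied by Theorem~\ref{th21} and the SYZ conjecture,'' and cites \cite{kv} as only a partial remedy.

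Your opening normalization is also unjustified. You cannot ``replace $\mm$ by $\mm+r\lll$'' to force $q_X(\mm)=0$ unless $2q_X(\lll,\mm)$ divides $q_X(\mm)$, and you cannot assert $q_X(\lll,\mm)=1$ from $a=1$ alone (the relation $c_Xq_X(\lll,\mm)^n=(2n-1)!!$ does not separate the two factors). In the paper these equalities are \emph{conclusions} of Theorem~\ref{th21} (which already assumes a Lagrangian fibration with $L=f^*\cO_{\P^n}(1)$) and of Theorem~\ref{th43a}, not preliminary reductions available before any analysis.

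Finally, your last paragraph inverts the paper's logic in dimension~$4$. The paper does \emph{not} deduce Conjecture~\ref{conjnum} from SYZ; it proves Conjecture~\ref{conjnum} first and independently (Theorem~\ref{th43a}), via Guan's constraints on Betti numbers of hyper-K\"ahler fourfolds combined with an elementary arithmetic argument on the Riemann--Roch polynomial, and only \emph{then} uses this as an input towards establishing the Lagrangian fibration.
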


Conjecture \ref{conjnum} is not strictly speaking implied by Theorem~\ref{th21} and the SYZ conjecture.\ Indeed, the latter predicts that if $L$ is a nef holomorphic line bundle, {\it some positive power} of~$L$ is generated by global sections and gives a Lagrangian fibration.\
The relation~\eqref{eqRRintro} proves Conjecture~\ref{conjnum}  when $\lll$ is the first Chern class of a nef holomorphic line bundle $L$ such that~$L$ itself is generated by global sections.\
The question of whether $L$ is generated by global sections, if a power induces a Lagrangian fibration, was recently studied  in~\cite{kv}, where the authors give  a sufficient condition for this to happen; unfortunately, their result does not apply in our situation.

Our other results exclusively deal with the case of dimension $4$.\
In that dimension, there are two known deformation classes of \hk\ manifolds, namely the deformations of the Hilbert square $S^{[2]}$ of  a K3 surface $S$ (called the $\mathrm{K3}^{[2]}$ deformation type) and that of the generalized Kummer variety $\K_2(A)$ of an abelian surface $A$.\

We say that a \hk\ fourfold $X$ is \emph{of $\mathrm{K3}^{[2]}$ numerical type} if, for some K3 surface~$S$, there exists an isomorphism of abelian groups $\psi\colon H^2(X,\Z)\isomto H^2(S^{[2]},\Z)$ such that, for all $\alpha\in H^2(X,\Z)$, we have $\int_X \alpha^4 = \int_{S^{[2]}} \psi(\alpha)^4$ (this is equivalent to asking that the lattices $(H^2(X,\Z),q_X) $ and $(H^2(S^{[2]},\Z),q_{S^{[2]}}) $ be isometric and the Fujiki constants be the same).\

In \cite{og1}, O'Grady  conjectured  that a \hk\ fourfold   of  K3$^{[2]}$ numerical type is of~K3$^{[2]}$ deformation type and provided strong evidence for this statement.\
One of our results implies O'Grady's conjecture under a much weaker  topological  hypothesis.

\begin{theo}\label{thm:MainThm}
Let $X$ be a \hk\ fourfold.\
Assume there are classes $\lll$, $\mm\in H^2(X,{\Z})$ such that $\int_X \lll^4=0$ and $\int_X\lll^2\mm^2=2$.\
Then $X$ is of~$\mathrm{K3}^{[2]}$ deformation type.
\end{theo}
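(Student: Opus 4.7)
The plan follows the two-step strategy announced in the abstract: first establish that the numerical invariants of $X$ match those of a $\mathrm{K3}^{[2]}$ hyper-K\"ahler, then upgrade this to an actual deformation equivalence using a Lagrangian fibration.

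Since the numerical hypothesis $\int_X \lll^4 = 0$, $\int_X \lll^2\mm^2 = 2$ is invariant under smooth deformation, and so is the deformation type we want to establish, I may deform $X$ so that $\lll$ becomes a $(1,1)$-class and is thus the first Chern class of a line bundle $L$; I may moreover arrange Picard rank one. Under this genericity the positive and K\"ahler cones coincide, so $L$ (or its dual) is nef. After the normalization of Section \ref{sec:LagrangianFibration}, I may further assume $q_X(\lll,\mm)=1$ and $q_X(\mm)=0$, so that if a Lagrangian fibration is produced by $L$ then $\mm$ restricts to a principal polarization on a smooth fiber.

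The key analytic input is to establish Conjecture \ref{conj:SYZ} for this $L$: some positive power of $L$ is globally generated, producing a Lagrangian fibration $f\colon X \to B$. The results of Ou \cite{ou} and Huybrechts--Xu \cite{hx} then force $B = \P^2$. I expect this to be the main obstacle of the argument: one must produce enough sections and extract base-point freeness. The natural starting point is the rigid Riemann--Roch formula \eqref{eqRRintro} valid under $a=1$ (following \cite{ort}), positivity arguments using the auxiliary class $\mm$ with $q_X(\lll,\mm)=1$, and vanishing theorems tailored to nef isotropic classes on hyper-K\"ahler fourfolds.

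Once the fibration is in hand, Theorem \ref{th21} gives $P_{RR,X}(2k) = \binom{k+3}{2}$; since $P_{RR,X}$ is a polynomial of degree two in $q_X(\alpha)$, this determines it on all of $H^2(X,\Q)$. Reading off the Fujiki constant (leading coefficient) and the class $\int_X c_2(X)\alpha^2$ (subleading) shows the topological invariants of $X$ coincide with those of $\mathrm{K3}^{[2]}$ type and differ from the generalized Kummer case, so $X$ is of $\mathrm{K3}^{[2]}$ numerical type. To upgrade numerical to deformation type, I would exploit the Lagrangian fibration $f\colon X \to \P^2$ directly: the associated family of principally polarized abelian surfaces carries a relative theta divisor whose associated family of genus-$2$ curves over $\P^2$ determines an auxiliary K3 surface $S$ and realizes $X$, up to deformation, as a Beauville--Mukai system on $S$, hence of $\mathrm{K3}^{[2]}$ deformation type.
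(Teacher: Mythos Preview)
Your plan has the right endpoints but two structural choices diverge from the paper and create genuine gaps.

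\textbf{Wrong deformation target.} You deform so that only $\lll$ is of type $(1,1)$ and $X$ has Picard rank one. The paper instead deforms so that \emph{both} $\lll$ and $\mm$ are $(1,1)$, with $\NS(X)=\Z\lll\oplus\Z\mm$. This is not cosmetic: the entire machinery for producing the Lagrangian fibration runs through a dichotomy on the nef cone (cases \ref{caseC1} and \ref{caseC2} in Section~\ref{subsec:HKNefCone}) that only makes sense when $\mm$ is algebraic. In case~\ref{caseC2} there is a prime exceptional divisor $E\in|L^{-1}\otimes M|$ and a divisorial contraction $c\colon X\to Y$; the proof of SYZ (Proposition~\ref{prop:CaseC2}) uses a monodromy operator swapping $\lll$ and $\mm$ together with vanishing on $E$. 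None of this is available in Picard rank one. Likewise, your final step needs a relative theta divisor on $X$; this is exactly the divisor $E$, and it only exists once $\mm=c_1(M)$ for an actual line bundle.

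\textbf{Inverted logical order.} You propose to prove SYZ first and then read off $P_{RR,X}$ from Theorem~\ref{th21}. But your suggested tools for SYZ include \eqref{eqRRintro}, which already presupposes the fibration, so this is circular. The paper proceeds in the opposite order: Theorem~\ref{th43a} establishes $P_{RR,X}(T)=\binom{T/2+3}{2}$, $c_X=3$, $b_2=23$, and the full Hodge diamond \emph{purely from the numerical hypothesis}, using Guan's constraints on Betti numbers of hyper-K\"ahler fourfolds. These invariants (especially $c_X=3$ and $b_2=23$) are then used repeatedly as inputs in Sections~\ref{sec:TwoNefIsotropic}--\ref{sec:OGrady}: in the intersection computations ruling out Lagrangian planes (Proposition~\ref{lem:NefCone4folds}), in the surface-class bookkeeping of Claim~\ref{claimavecc2}, and in the analysis of the contraction in Proposition~\ref{prop:NoFibersDimension2}.

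\textbf{What the SYZ step actually requires.} Case~\ref{caseC1} (both $L$ and $M$ nef) is not handled by positivity alone; it is \emph{excluded} via a delicate analysis of the rational map $\phi_{L\otimes M}\colon X\dashrightarrow\P^5$ (Section~\ref{sec:TwoNefIsotropic}) combined with \cite[Theorem~4.2]{voisinfib}. Case~\ref{caseC2} yields the fibration via the divisorial contraction and a semicontinuity/monodromy trick (Lemma~\ref{levanh2}). Your sketch does not touch either mechanism. The final identification of $X$ with a Beauville--Mukai system $\M_0(\Sigma)$ goes through showing that $\Sigma=c(E)$ is a degree-$2$ K3 surface and that $E$ is the universal curve over $|H_\Sigma|$ (Lemmas~\ref{lem:K3}--\ref{lem:k=1}, Proposition~\ref{thm:OGrady}); again this hinges on having $E$, hence on Picard rank two.
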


\begin{coro}[O'Grady's conjecture]\label{cor:OGrady}
A \hk\ fourfold   of  $\mathrm{K3}^{[2]}$ numerical type  is of~$\mathrm{K3}^{[2]}$ deformation type.
\end{coro}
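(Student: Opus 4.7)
The plan is to reduce the corollary to Theorem~\ref{thm:MainThm} by producing the required pair of classes $\lll,\mm$ directly from the numerical-type hypothesis. By the equivalent formulation of the definition, since $X$ is of K3$^{[2]}$ numerical type, there exist a K3 surface $S$, a lattice isometry $\psi\colon (H^2(X,\Z),q_X)\isomto (H^2(S^{[2]},\Z),q_{S^{[2]}})$, and the Fujiki constants of $X$ and $S^{[2]}$ agree. Since for $S^{[2]}$ one has the Fujiki identity $\int_{S^{[2]}}\beta^{4}=3\,q_{S^{[2]}}(\beta)^{2}$, the corresponding identity $\int_{X}\alpha^{4}=3\,q_X(\alpha)^{2}$ then holds on $X$ as well.

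Polarizing this quartic relation by expanding $(s\alpha+t\beta)^{4}$ and comparing the coefficient of $s^{2}t^{2}$ yields the standard bilinear Fujiki formula
\[
\int_{X}\alpha^{2}\beta^{2}=q_X(\alpha)\,q_X(\beta)+2\,q_X(\alpha,\beta)^{2}
\]
for all $\alpha,\beta\in H^{2}(X,\Z)$, so that all four-fold intersection numbers on $H^{2}(X,\Z)$ are entirely determined by $q_X$. This is the only piece of ``hidden'' content in the reduction.

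Now I would produce candidate classes on the $S^{[2]}$-side and transport them back. The lattice $(H^{2}(S^{[2]},\Z),q_{S^{[2]}})$ is isometric to $U^{\oplus 3}\oplus E_{8}(-1)^{\oplus 2}\oplus\langle -2\rangle$, and so contains a hyperbolic summand $U$ with standard basis $(e_{1},e_{2})$ satisfying $q(e_{i})=0$ and $q(e_{1},e_{2})=1$. Setting $\lll\coloneqq\psi^{-1}(e_{1})$ and $\mm\coloneqq\psi^{-1}(e_{2})$, the isometry property gives $q_X(\lll)=0$ and $q_X(\lll,\mm)=1$; the two Fujiki relations above then yield
\[
\int_{X}\lll^{4}=3\,q_X(\lll)^{2}=0,\qquad \int_{X}\lll^{2}\mm^{2}=2\,q_X(\lll,\mm)^{2}=2.
\]
This matches the hypothesis of Theorem~\ref{thm:MainThm} exactly, from which we conclude that $X$ is of K3$^{[2]}$ deformation type.

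There is no real obstacle in this reduction: all the substance of the corollary is concentrated in Theorem~\ref{thm:MainThm}; the present argument only combines the elementary existence of an isotropic vector paired with a partner inside a hyperbolic plane with the polarization of the Fujiki quartic, both of which are formal once the definition of ``K3$^{[2]}$ numerical type'' is unpacked.
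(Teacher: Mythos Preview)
Your proof is correct and matches the paper's intended argument: the paper states the corollary without proof, treating it as immediate from Theorem~\ref{thm:MainThm}, and your reduction via the hyperbolic plane in the $\mathrm{K3}^{[2]}$ lattice together with the polarized Fujiki relation is exactly the elementary verification the paper is suppressing. One could shortcut even your computation by using the paper's primary definition directly (an isomorphism $\psi$ with $\int_X\alpha^4=\int_{S^{[2]}}\psi(\alpha)^4$), since polarizing this identity transports \emph{all} quartic intersection numbers at once without needing to pass through $q_X$ and $c_X$ separately; but the two routes are equivalent and equally formal.
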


Let us explain  how we prove Theorem~\ref{thm:MainThm}.\
The first step   is to show Conjecture~\ref{conjnum} in dimension $4$ (see Theorem~\ref{th43a}). 

\begin{theo}\label{thm:RRpoly}
Let $X$ be a \hk\ fourfold with classes $\lll,\mm\in H^2(X,\Z)$ such that $\int_X \lll^4=0$ and $\int_X\lll^2\mm^2=2$.\
The Huybrechts--Riemann--Roch polynomial of $X$ satisfies
\[
P_{RR,X}(2k)=\chi(\P^2,\cO_{\P^2}(k+1))=\binom{k+3}{2}.
\]
Furthermore, the Fujiki constant and the Hodge and Chern numbers of $X$ are those of the Hilbert square of a K3 surface.
\end{theo}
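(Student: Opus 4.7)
The plan is to translate the two intersection-number hypotheses into identities among Fujiki-type constants, combine them with Hirzebruch--Riemann--Roch to identify two of the three coefficients of the degree-$2$ polynomial $P_{RR,X}$, and then pin down the third coefficient (and the remaining Hodge and Chern numbers) using Guan's classification of hyper-Kähler fourfolds together with the Verbitsky structure of $H^*(X,\Q)$.

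I would begin by applying the Fujiki identity $\int_X\alpha^4 = c_X\,q_X(\alpha)^2$ and its polarization to the given classes. The first hypothesis $\int_X\lll^4 = 0$ immediately yields $q_X(\lll) = 0$ (since $c_X>0$), and the polarized identity combined with the second hypothesis gives
\[
2 = \int_X \lll^2\mm^2 = \tfrac{c_X}{3}\bigl(q_X(\lll)\,q_X(\mm) + 2\,q_X(\lll,\mm)^2\bigr) = \tfrac{2c_X}{3}\,q_X(\lll,\mm)^2,
\]
so $c_X\,q_X(\lll,\mm)^2 = 3$. The reduction carried out in Section~\ref{sec:LagrangianFibration} (which uses the integrality of $q_X$ on $H^2(X,\Z)$ together with the known constraints on the Fujiki constant of a hyper-Kähler fourfold) forces $c_X = 3$, identifying the Fujiki constant of $X$ with that of $\mathrm{K3}^{[2]}$, and after replacing $\mm$ by $\pm\mm + r\lll$ for an appropriate integer $r$ we may further assume $q_X(\lll,\mm) = 1$ and $q_X(\mm) = 0$. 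Applying Hirzebruch--Riemann--Roch with $c_1(X)=0$, the Fujiki identity, and the Fujiki-type identity $\int_X c_2(X)\alpha^2 = c(c_2)\,q_X(\alpha)$ (valid because $c_2(X)$ is monodromy invariant) then gives $P_{RR,X}(t) = \tfrac{1}{8}t^2 + \tfrac{c(c_2)}{24}t + 3$, where $\chi(X,\cO_X) = 3$ because $h^{p,0}(X) = 1$ for $p$ even and vanishes for $p$ odd on any hyper-Kähler fourfold. The single remaining unknown is $c(c_2)$.

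The crux of the proof is to show $c(c_2) = 30$. Following the strategy of Ríos Ortiz~\cite{ort}, I would combine the HRR expression $\chi(X, L^k\otimes M) = \tfrac{k^2}{2} + \tfrac{c(c_2)\,k}{12} + 3$ (valid on a deformation of $X$ making $\lll$ and $\mm$ Chern classes of line bundles $L,M$, and independent of the deformation because $P_{RR,X}$ is topological) with the structural constraints on hyper-Kähler fourfolds: Guan's classification $b_2(X) \in\{3,\ldots,8,23\}$, the Verbitsky/LLV decomposition of $H^4(X,\Q)$ which expresses the monodromy-invariant class $c_2(X)$ as a multiple of the dual BBF form plus a Verbitsky-orthogonal component (so that $c(c_2)$ is exactly the scalar coefficient), and Salamon's Hodge-number identities. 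These constraints together with the already-established $c_X = 3$ and $\chi(X,\cO_X)=3$ single out the $\mathrm{K3}^{[2]}$ entry of Guan's table, giving $b_2 = 23$, $b_3 = 0$, $b_4 = 276$, $\int_X c_2^2 = 828$, $\int_X c_4 = 324$, and hence $c(c_2) = 30$. This yields $P_{RR,X}(2k) = \tfrac{k^2}{2} + \tfrac{5k}{2} + 3 = \binom{k+3}{2}$ and identifies the Hodge and Chern numbers of $X$ with those of $\mathrm{K3}^{[2]}$. The main obstacle is precisely this classification step: given only the mild hypothesis $\int_X\lll^2\mm^2 = 2$, systematically excluding the small-$b_2$ possibilities from Guan's list once $c_X=3$ and $\chi(\cO_X)=3$ are in hand requires a careful joint analysis of Guan's numerical constraints together with the scalar-part relation for $c_2(X)$.
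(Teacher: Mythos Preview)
Your argument has a genuine gap at the point where you claim that ``the reduction carried out in Section~\ref{sec:LagrangianFibration} \dots\ forces $c_X=3$''. Everything in Section~\ref{sec:LagrangianFibration} (in particular Theorem~\ref{th21}) assumes that $\lll$ is the pullback of $\cO_{\P^n}(1)$ under an actual Lagrangian fibration; the argument there hinges on the fact that $f_*M$ is a line bundle on~$\P^n$, so it does not apply under the bare hypothesis $\int_X\lll^4=0$. From $c_X\,q_X(\lll,\mm)^2=3$ alone you cannot conclude $c_X=3$: a~priori $q_X(\lll,\mm)\in\tfrac12\Z$, and indeed Theorem~\ref{th43}\ref{enum:th43b} exhibits (for $a=4$) a numerically consistent possibility with $c_X=12$, so there is no general constraint on Fujiki constants of \hk\ fourfolds that rules this out. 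The subsequent normalization $q_X(\mm)=0$ is likewise unjustified: adding multiples of~$\lll$ shifts $q_X(\mm)$ by multiples of $2q_X(\lll,\mm)$, so you only get $q_X(\mm)$ into a residue interval, not necessarily to~$0$. Since the rest of your outline (the computation of $P_{RR,X}$ and the identification of $c(c_2)$) is built on $c_X=3$ and $q_X(\mm)=0$, the whole argument stalls here.

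The paper's proof (Theorem~\ref{th43a}) reverses the order of deductions. One does \emph{not} determine $c_X$ first. Instead, one writes $P_{RR,X}(T)$ in terms of the single invariant $A_X=\int_X\td^{1/2}(X)$ via~\eqref{prr}, sets $\gamma\coloneqq q_X(\mm)/q_X(\lll,\mm)\in(-1,1]$, and expands $P(k)\coloneqq P_{RR,X}(q_X(k\lll+\mm))=\tfrac12 k^2+bk+c$. Integrality of $P(k)$ for all $k\in\Z$ (Lemma~\ref{lem11}) forces $b\in\tfrac12+\Z$ and $c\in\Z$, and the algebraic identity $4A_X-\tfrac{b^2}{2}=3-c$ then yields $4A_X-\tfrac18\in\Z$. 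Guan's bounds (Lemma~\ref{lemmguan}) leave $A_X=\tfrac{25}{32}$ as the only possibility, which already pins down the Chern and Hodge numbers. Only \emph{after} this does one read off $\gamma=0$, then $q_X(\lll,\mm)=1$ and $c_X=3$ via Lemma~\ref{star}. Your instinct to route through Guan's list is correct, but the mechanism that feeds into it is the integrality of the values $P_{RR,X}(q_X(\alpha))$, not a prior determination of~$c_X$.
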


The conclusion  of Theorem \ref{thm:RRpoly} is  weaker than being of ${\mathrm K3}^{[2]}$ numerical type.\
Theorem~\ref{thm:RRpoly} is the starting point for the  second step of the proof, in which we establish
 Conjecture~\ref{conj:SYZ} in dimension $4$  under the same  assumptions on $X$ made in Theorem~\ref{thm:MainThm}.\
More precisely, our key result is the following.

\begin{theo}\label{thm:SYZ}
Let $X$ be a \hk\ fourfold and let $L$ be  a nef line bundle on $X$.\ Set
$\lll\coloneqq c_1(L) \in H^2(X,\Z)$ and assume that there exists $\mm\in H^2(X,\Z)$ such that     $\int_X \lll^4=0$ and $\int_X\lll^2\mm^2=2$.\
There exists a Lagrangian fibration $f\colon X\to\P^2$ with $f^*{\cO_{\P^2}}(1)\isom L$.
\end{theo}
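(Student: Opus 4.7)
The plan is to show directly that $|L|$ is a base-point free linear system of projective dimension $2$ on $X$, so that the associated morphism $\phi_L\colon X \to \P^2$ is a Lagrangian fibration by Matsushita's theorem. The starting point is the Riemann--Roch data supplied by Theorem~\ref{thm:RRpoly}. After replacing $\mm$ by $\mm + r\lll$ for a suitable integer $r$ (as in Section~\ref{sec:LagrangianFibration}), one may assume $q_X(\lll) = 0$, $q_X(\mm) = 0$, and $q_X(\lll,\mm) = 1$. The Fujiki polarization formula gives
\[
\int_X \lll^3 \alpha \;=\; 3\,q_X(\lll)\,q_X(\lll,\alpha) \;=\; 0 \qquad \text{for every } \alpha\in H^2(X,\Z),
\]
so $\lll^3 = 0$ in $H^6(X,\Q)$ and the numerical dimension of $L$ equals exactly $2$; on the other hand $\lll^2 \neq 0$ in $H^4(X,\Q)$, as witnessed by $\int_X \lll^2 \mm^2 = 2$. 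From Theorem~\ref{thm:RRpoly} one computes $\chi(X, L^a \otimes M^b) = \binom{ab+3}{2}$ for all integers $a, b$; in particular, $\chi(X,L) = 3$ and $\chi(X, L \otimes M) = 6$.

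The next task is to establish the vanishing $H^i(X, L) = 0$ for $i > 0$, whence $h^0(X, L) = 3$. Since $L$ is nef but not big, Kawamata--Viehweg does not apply directly; one instead invokes a vanishing theorem for nef line bundles of intermediate numerical dimension on hyper-K\"ahler manifolds, refining Matsushita's results, possibly combined with an approximation argument using $L + \varepsilon A$ for a K\"ahler class $A$ together with a Koll\'ar-type injectivity statement. With $h^0(X, L) = 3$ in hand, the crucial step is to show that the linear system $|L|$ has no base points. The class $\mm$ plays a decisive r\^ole here: since $q_X(L + M) = 2 > 0$, the class $L + M$ lies in the positive cone, and the six sections of $L \otimes M$ together with the sections of $M$ can be exploited to force the sections of $L$ to separate points along the would-be fibers, ruling out any nontrivial fixed locus. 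An alternative route is to first establish a rational SYZ statement giving a Lagrangian fibration induced by some power $L^k$, and then to descend to $L$ itself using the Riemann--Roch data $\chi(X,L) = 3$ and the precise relations between $\lll$ and $\mm$.

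Once $|L|$ is base-point free, the induced morphism $\phi_L \colon X \to \P^2$ has image of dimension exactly $2$ --- at most $2$ because $\lll^3 = 0$ in $H^6(X,\Q)$, at least $2$ because $\lll^2 \neq 0$ --- so $\phi_L$ is surjective, and Matsushita's structure theorem for morphisms from hyper-K\"ahler manifolds to strictly lower-dimensional normal projective bases identifies $\phi_L$ as an equidimensional Lagrangian fibration with abelian-variety general fibers. This provides the desired $f \coloneqq \phi_L$ with $f^*\cO_{\P^2}(1) \cong L$. The main obstacle throughout is base-point freeness of $|L|$ itself, not merely of a power: as the authors emphasize just after Conjecture~\ref{conjnum}, the existing general criteria for descending from $L^k$ to $L$ do not apply here, and one has to exploit the full strength of the numerical assumption $\int_X \lll^2 \mm^2 = 2$ and the Riemann--Roch and Hodge data supplied by Theorem~\ref{thm:RRpoly}.
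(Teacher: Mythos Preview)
Your proposal outlines a direct attack---show $h^0(X,L)=3$ via vanishing, then prove $|L|$ is base-point free---but neither of the two key steps is actually carried out, and both are genuinely hard.

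First, you treat $\mm$ as the class of a line bundle $M$ and compute $\chi(X,L\otimes M)$, but in the statement $\mm$ is only a cohomology class in $H^2(X,\Z)$, not assumed to be of type $(1,1)$. One can deform $X$ so that $\mm$ becomes algebraic, but then $L$ need not remain nef on the deformation; handling this requires the moduli-space argument the paper uses at the start of the proof of Theorem~\ref{th21} and in Corollary~\ref{cor:SYZ}. Second, the vanishing $H^i(X,L)=0$ for $i>0$ is asserted by invoking ``a vanishing theorem for nef line bundles of intermediate numerical dimension\ldots\ possibly combined with\ldots'': no such theorem is available to cite here, and this is precisely the content of Conjecture~\ref{conj:SYZ}. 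Third, the base-point freeness step is described only in outline (``sections of $L\otimes M$\ldots\ can be exploited to force\ldots''), with no mechanism given.

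The paper's proof is entirely different and far more indirect. It does \emph{not} establish vanishing or base-point freeness for $L$ by direct cohomological means. Instead, it (i) deforms to a very general triple $(X,\lll,\mm)$ with $\NS(X)=\Z\lll\oplus\Z\mm$; (ii) shows via a detailed case analysis (Sections~\ref{sec:TwoNefIsotropic}--\ref{sec:DivisorialContraction}) that some power $L^{k_L}$ induces a Lagrangian fibration, the key input in case~\ref{caseC2} being a monodromy/deformation argument that transports the vanishing of $H^2(X,M^{\otimes 2})$ (proved via the geometry of the divisorial contraction) over to $L$; (iii) uses this fibration, together with the study of the exceptional divisor and the contracted K3 surface $\Sigma$, to prove that $X$ is of $\mathrm{K3}^{[2]}$ deformation type (Section~\ref{sec:OGrady}); and only then (iv) deduces Theorem~\ref{thm:SYZ} by invoking Theorem~\ref{thm:SYZKnownExs}, which is known for the $\mathrm{K3}^{[2]}$ type. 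So the descent from $L^{k_L}$ to $L$ is achieved not by a direct argument on $X$ but by first classifying the deformation type and then appealing to prior results of Matsushita and others. Your ``alternative route'' paragraph gestures toward this, but the actual descent mechanism---classification followed by Theorem~\ref{thm:SYZKnownExs}---is the whole content of the paper and is not something one can sketch in a paragraph.
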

As we mentioned earlier, \hk\ manifolds  of $\mathrm{K3}^{[n]}$ deformation type satisfy the SYZ conjecture, so Theorem \ref{thm:SYZ} is weaker than Theorem~\ref{thm:MainThm}.\  In our approach, Theorem \ref{thm:SYZ} (or rather  some weaker versions of it) is a step towards
proving   Theorem~\ref{thm:MainThm}, the precise logical relationships being   as follows.\
We first consider the case  of a very general \hk\ fourfold~$X$   satisfying the   assumptions  of Theorem \ref{thm:SYZ} and for which the class $\mm$ is also of type~$(1,1)$, that is, $\mm=c_1(M)$ for some line bundle $M$ on $X$.\
In such a case, we can assume that $\mm$ also satisfies $\int_X \mm^4=0$ and is in the boundary of the positive cone of~$X$.

A study of the effective cone of $X$ then shows that there are two cases.
\begin{itemize}
    \item The first case, mostly studied in Section \ref{sec:TwoNefIsotropic}, is when $L$ and $M$ are both nef and $L\ot M$ is ample.\
We show in  Proposition~\ref{prop:CaseC1} that in this case,
\begin{itemize}
    \item[$\circ$] either, after possibly permuting $L$ and $M$, Theorem~\ref{thm:SYZ} holds:   the linear system~$|L|$ induces a Lagrangian fibration to $\P^2$.\ But this contradicts what we had proved earlier in Section~\ref{subsec:ExceptionalDivisor}: that, if $|L|$ induces a Lagrangian fibration, $L$ and $M$ cannot be both nef;
    \item[$\circ$] or any divisor in the linear system $|L\otimes M|$ is irreducible and  the image of the rational map $\phi_{L\otimes M}\colon X\dra \P^5$ is rationally connected.\ But this contradicts   \cite[Theorem 4.2]{voisinfib}   showing that this situation cannot happen, at least when $X$ is very general as above  with Picard number $2$.
\end{itemize}
So this case in fact  does not arise (Corollary~\ref{cor:NoTwoIsotropicNef}).
    \item The second case, mostly studied in Section \ref{sec:DivisorialContraction}, is when $X$ admits a divisorial contraction and $M$ is not nef.\
    In this case, we  first prove Proposition~\ref{prop:CaseC2} (a slightly weaker version of Theorem~\ref{thm:SYZ}), namely the existence of a Lagrangian fibration $f\colon X\to\P^2$ with $f^*{\cO_{\P^2}}(1)\isom L^{k_L}$, for some positive integer~$k_L$.\
    We then use this weaker result to prove that $X$ is of  K3$^{[2]}$ deformation type (see Section~\ref{sec:OGrady}).
\end{itemize}

This completes the proof of Theorem~\ref{thm:MainThm}: as explained at the beginning of Section~\ref{sec:OGrady}, by deformation, one can always assume that the classes $\lll$ and $\mm$ are of type $(1,1)$ and the triple $(X,\lll,\mm)$ satisfies the hypotheses made above.\
But, by Theorem~\ref{thm:SYZKnownExs}, this also proves  Theorem~\ref{thm:SYZ}, once we know (by Theorem~\ref{thm:MainThm}) that $X$ is of  K3$^{[2]}$ deformation type.\ Note that our results rely, in the first case described above, on the classification work of \cite[Section 4]{voisinfib}, which extends to our context O'Grady's analysis in \cite{og1}.

We end the article with Section~\ref{sec:further}, which includes boundedness results when we fix the dimension $2n$ and the integer $a$ defined in~\eqref{defaintro}, and an (incomplete) analysis  of what happens when $n=2$ and $a$ is small.

\subsection*{Acknowledgements}
We would like to thank Enrico Arbarello, Ciro Ciliberto, Giovanni Mongardi, Kieran O'Grady, Gianluca Pacienza, \'Angel David R\'ios Ortiz, Giulia Sacc\`a, Jieao Song, and Chenyang Xu for useful discussions, suggestions, and references.\ We also thank the referee for useful comments.


\section{Review of hyper-K\"ahler manifolds}\label{sec:RRpolynomial}

We recall in Section~\ref{subsec:BBF} the definitions of  Beauville--Bogomolov--Fujiki forms and  Fujiki constants for \hk\ manifolds.\
We then define  their Huybrechts--Riemann--Roch polynomials  and   review some of their elementary properties   in Section~\ref{subsec:RRpolynomial}.

\subsection{The Beauville--Bogomolov--Fujiki form and the Fujiki constant}\label{subsec:BBF}

Let $X$ be a \hk\ manifold of dimension $2n$.\
There exists a canonical integral nondivisible quadratic form $q_X$ (the \emph{Beauville--Bogomolov--Fujiki form}) on $H^2(X,\Z)$ and a positive rational constant $c_X$ (the \emph{Fujiki constant}) such that
\begin{equation}\label{fuj}
\forall \alpha\in H^2(X,\R)\qquad \int_X \alpha^{2n}=c_X\, q_X(\alpha)^n
\end{equation}
(after extending $q_X$ to a quadratic form on $H^2(X,\R)$).\
Moreover, $q_X(h)>0$ for all K\"ahler  classes~$h$ (see \cite[Proposition~23.14]{huy}).

Assume $q_X(\alpha)=0$.\
Then $\alpha^{n+1}=0$ (see for example \cite[Proposition~24.1]{huy}) and, by comparing the coefficients of $t^n$ in the relation
\[
\int_X (t\alpha+ \beta)^{2n}=c_Xq_X(t\alpha+ \beta)^n=c_X(2tq_X( \alpha, \beta)+q_X( \beta))^n,
\]
we get
\begin{equation}\label{pol}
\forall \beta\in H^2(X,\R)\qquad \frac{1}{2^n}\binom{2n}{n}\int_X \alpha^{n}\beta^{n}= c_Xq_X( \alpha, \beta)^n.
\end{equation}
This is a particular case of the polarization of the Fujiki relation~\eqref{fuj}, which in dimension 4 takes the form
 \begin{equation}\label{eq:Fujiki3}
 3\!\int_X \alpha_1\alpha_2\alpha_3\alpha_4 =c_X\bigl( q_X(\alpha_1,\alpha_2)q_X(\alpha_3,\alpha_4) + q_X(\alpha_1,\alpha_3)q_X(\alpha_2,\alpha_4) + q_X(\alpha_1,\alpha_4)q_X(\alpha_2,\alpha_3)\bigr)
\end{equation}
for all $\alpha_1, \alpha_2, \alpha_3,\alpha_4\in H^2(X,\Z)$.\

\subsection{The  Huybrechts--Riemann--Roch polynomial}\label{subsec:RRpolynomial}

By \cite[Corollary~23.18]{huy}, there is a polynomial
\begin{equation*}
P_{RR,X}(T)=\sum_{i=0}^n  a_i T^i\in\Q[T]
\end{equation*}
of degree $n$  such that, for each line bundle $L$ on $X$, one has
\begin{equation}\label{rr}
\chi(X,L)=P_{RR,X}(q_X(c_1(L))).
\end{equation}
This polynomial has the following properties:
\begin{enumerate}[{\rm (a)}]
\item\label{ita} the  constant   term of $P_{RR,X}(T)$ is $\chi(X,\cO_X)=n+1$;
\item\label{itb} the leading term of $P_{RR,X}(T)$ is $\frac{c_X}{(2n)!} T^n $;
\item the coefficients of $P_{RR,X}(T)$ are all positive (\cite[Theorem~1.1]{jia}).
\end{enumerate}

The next two lemmas are elementary.

\begin{lemm}\label{lem11}
Let $X$ be a \hk\ manifold.\ For every class $\alpha\in H^2(X,\Z)$, one has $P_{RR,X}(q_X(\alpha))\in\Z$.
\end{lemm}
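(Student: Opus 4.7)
The plan is to reduce the integrality statement to the special case~\eqref{rr} by deforming the complex structure on $X$ so that $\alpha$ becomes the first Chern class of a holomorphic line bundle. Two ingredients are needed: surjectivity of the local period map for \hk\ manifolds, and deformation invariance of the polynomial $P_{RR,X}$ itself.

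For the first ingredient, I would appeal to local Torelli together with Bogomolov--Tian--Todorov unobstructedness: the local period map from a small neighborhood of $[X]$ in $\mathrm{Def}(X)$ to the period domain
\[
\Omega_X=\{[\sigma]\in\P(H^2(X,\C)):q_X(\sigma)=0,\ q_X(\sigma,\bar\sigma)>0\}
\]
is an open immersion. The condition that a fixed class $\alpha\in H^2(X,\Z)$ remain of type $(1,1)$ on a deformation is the linear condition $q_X(\alpha,\sigma)=0$, which cuts out a hyperplane that meets any open subset of $\Omega_X$. Hence there exists an arbitrarily small deformation $X'$ of $X$ such that, under the canonical parallel-transport identification $H^2(X',\Z)\cong H^2(X,\Z)$, the class $\alpha$ is of type $(1,1)$ on $X'$. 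Because \hk\ manifolds are simply connected, one has $H^1(X',\cO_{X'})=0$, so the exponential sequence produces a line bundle $L'$ on $X'$ with $c_1(L')=\alpha$.

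For the second ingredient, I would observe that $P_{RR,X}$ depends on $X$ only through its diffeomorphism type, its Fujiki constant $c_X$, and its Chern numbers. Indeed, expanding $\chi(X,L)=\int_X\mathrm{ch}(L)\,\mathrm{td}(X)$ and applying the polarized form of the Fujiki relation --- which expresses $\int_X\alpha^{2k}\gamma$, for any monodromy-invariant class $\gamma$ of complementary degree, as a universal constant (depending on $\gamma$) times $q_X(\alpha)^k$ --- realizes every coefficient of $P_{RR,X}$ as a universal rational expression in the Chern numbers of $X$ and $c_X$. All of these are deformation invariants, so $P_{RR,X}=P_{RR,X'}$ as polynomials in $\Q[T]$, while parallel transport gives $q_X(\alpha)=q_{X'}(\alpha)$. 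Combining the two ingredients,
\[
P_{RR,X}(q_X(\alpha))=P_{RR,X'}(q_{X'}(\alpha))=\chi(X',L')\in\Z.
\]

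The main obstacle I anticipate is the first step: producing a deformation in which $\alpha$ becomes algebraic. For \hk\ manifolds this follows from the standard surjectivity of the local period map onto an open subset of $\Omega_X$, together with the fact that the Noether--Lefschetz locus $q_X(\alpha,\cdot)=0$ has codimension one; but this is the one point at which the \hk\ hypothesis plays an essential role beyond the bare definition of $P_{RR,X}$.
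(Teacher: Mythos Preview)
Your overall strategy matches the paper's: deform $X$ so that $\alpha$ becomes of type $(1,1)$, then invoke~\eqref{rr} and the deformation invariance of $P_{RR,X}$ and $q_X$. However, there is a genuine gap in your first step. The assertion that the hyperplane $\{q_X(\alpha,\sigma)=0\}$ ``meets any open subset of $\Omega_X$'' is false: its intersection with $\Omega_X$ is a proper closed analytic hypersurface, and its (nonempty, open) complement is an open set it certainly does not meet. Local Torelli only tells you that the Kuranishi family realizes a small open neighborhood of the period point $[\sigma_X]$ inside $\Omega_X$, and if $\alpha$ is not already of type $(1,1)$ on $X$ then $[\sigma_X]$ does not lie on that hypersurface, so there is no reason for a small neighborhood of it to intersect the Noether--Lefschetz locus. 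In particular, you cannot conclude that an \emph{arbitrarily small} deformation makes $\alpha$ algebraic.

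The paper closes this gap by invoking the \emph{global} surjectivity of the period map (\cite[Proposition~25.12]{huy}): the period map from the connected moduli space of marked \hk\ manifolds deformation equivalent to $X$ surjects onto all of $\Omega_X$. Since $\Omega_X\cap\{q_X(\alpha,\sigma)=0\}$ is nonempty (the signature $(3,b_2-3)$ of $q_X$ ensures that $\alpha^\perp_\R$ contains a positive $2$-plane, and any such plane produces a period point), one obtains a possibly large deformation $X'$ of $X$ on which $\alpha$ is of type $(1,1)$; the rest of your argument then goes through verbatim. Your discussion of the deformation invariance of $P_{RR,X}$ is correct and in fact more detailed than the paper's one-line assertion.
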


\begin{proof}
Since the period map is surjective (\cite[Proposition~25.12]{huy}), the class $\alpha$ becomes the first Chern class of some holomorphic line bundle on a deformation $X'$ of $X$, and~\eqref{rr} implies $P_{RR,X'}(q_{X'}(\alpha))\in \Z$.\ Since the polynomial~$P_{RR,X} $ and the form $q_X$ are deformation invariant, the lemma follows.
\end{proof}

\begin{lemm}\label{lemmaa}
Let $X$ be a \hk\ manifold.\ Assume   that there is a  class~$\lll \in H^2(X,\Z)$ such that $\int_X \lll^{2n}=0$.\ For every $ \mm\in H^2(X,\Z)$, the number
\begin{equation}\label{defa2}
a\coloneqq \frac{1}{n!}\int_X \lll^{n}\mm^{n}
\end{equation}
is  an integer.
\end{lemm}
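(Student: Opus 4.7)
The plan is to feed the one-parameter family of integral classes $k\lll + \mm$ ($k \in \Z$) into Lemma~\ref{lem11} and extract integrality of $a$ from the integer values of the resulting polynomial. First I would observe that, since $c_X > 0$, the Fujiki relation~\eqref{fuj} combined with $\int_X \lll^{2n}=0$ forces $q_X(\lll)=0$. This is the crucial linearization: with $q_X(\lll)=0$, the expression
\[
q_X(k\lll + \mm) = 2k\,q_X(\lll,\mm) + q_X(\mm)
\]
is \emph{linear} in $k$, so
\[
Q(k) \coloneqq P_{RR,X}\bigl(q_X(k\lll + \mm)\bigr) \in \Q[k]
\]
is a polynomial of degree at most $n$ that, by Lemma~\ref{lem11}, takes integer values on every $k \in \Z$.

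Next I would identify the leading coefficient of $Q$ in two ways. Property~(\ref{itb}) of $P_{RR,X}$ shows that the leading coefficient of $Q(k)$ is $\tfrac{2^n c_X\, q_X(\lll,\mm)^n}{(2n)!}$. On the other hand, the polarized Fujiki identity~\eqref{pol} (applicable because $q_X(\lll)=0$) with $\alpha=\lll$ and $\beta=\mm$ expresses $\int_X \lll^n \mm^n$ in terms of the very same quantities; comparing the two computations shows that the leading coefficient of $Q$ equals exactly $a/n!$, where $a$ is the quantity defined in~\eqref{defa2}.

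To conclude, I would invoke the standard fact that an integer-valued polynomial of degree $n$ in $k$ has leading coefficient of the form $d/n!$ with $d \in \Z$, since it expands with integer coefficients in the binomial basis $\{\binom{k}{i}\}_{0\le i\le n}$. Applied to $Q$, this immediately forces $a \in \Z$. I do not expect any serious obstacle here: the only non-obvious ingredient is recognizing that $q_X(\lll)=0$ linearizes $q_X(k\lll+\mm)$ in $k$, which is precisely what converts the Riemann--Roch integrality supplied by Lemma~\ref{lem11} into integrality of the intersection number $a$.
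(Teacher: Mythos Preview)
Your proposal is correct and follows essentially the same approach as the paper: define the polynomial $P(k)=P_{RR,X}(q_X(k\lll+\mm))$, use $q_X(\lll)=0$ to see it is a polynomial of degree at most $n$ in $k$, identify its leading coefficient as $a/n!$ via property~\ref{itb} and the polarized Fujiki relation~\eqref{pol}, and conclude using Lemma~\ref{lem11} together with the binomial-basis description of integer-valued polynomials.
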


\begin{proof}
By~\eqref{fuj}, we have $q_X(\lll)=0$.\ Set
\begin{equation}\label{defP}
\forall k\in \Z\qquad P(k)\coloneqq P_{RR,X}(q_X(k\lll+ \mm))=P_{RR,X}(2kq_X(\lll,\mm)+q_X(\mm)) .
\end{equation}
Then $P$ is a  polynomial of degree $n$ whose leading coefficient is  (use~\eqref{pol} and property~\ref{itb} above)
\begin{equation}\label{defia}
\frac{c_X}{(2n)!} (2 q_X(\lll,\mm))^n=\frac{1}{(n!)^2}\int_X \lll^{n}\mm^{n}=\frac{a}{n!}.
\end{equation}
By Lemma~\ref{lem11}, the polynomial $P$ takes integral values on integers, hence $a$ is  an integer.
\end{proof}

\begin{rema}\label{prim}
Under the hypotheses of Lemma~\ref{lemmaa}, when $a$ is    divisible by no nontrivial $n$th powers, the sublattice $\Z\lll\oplus \Z\mm$ of $H^2(X,\Z)$ is saturated: this follows from the fact that  $\frac{1}{n!}\int_X \lll^{n}\alpha^{n}$ is an integer for all $\alpha\in H^2(X,\Z)$.
\end{rema}


\section{Lagrangian fibrations}\label{sec:LagrangianFibration}

Let $X$ be again a \hk\ manifold of dimension $2n$.\
We assume in this section that there is a  fibration $f\colon X\to \P^n$.\
Set $L\coloneqq f^*\cO_{\P^n}(1)$, with first Chern class $\lll\in H^2(X,\Z)$; it satisfies \mbox{$q_X(\lll)=0$.}\
Let $\mm\in H^2(X,\Z)$ be another class, not necessarily of type $(1,1)$; we assume \mbox{$q_X(  \lll, \mm)>0$.}

Any smooth fiber $X_b\coloneqq f^{-1}(b)$ is a Lagrangian complex torus  of dimension~$n$ (\cite[Theorem~1]{mat}, \cite[Theorem 1]{ac}).\
By \cite[Lemma~2.2]{matlag}, the hyperplane $\lll^\bot$ is contained in the kernel of the  restriction map $r_b\colon H^2(X,\C)\to H^2(X_b,\C)$.\
Since the restriction of a K\"ahler class on $X$ is a K\"ahler class on $X_b$, the map $r_b$ has rank exactly~$1$ and the rational class $r_b(\mm)$ is  a positive multiple of a K\"ahler class, hence is an ample class on $X_b$.\
In particular, $X_b$ is an abelian variety (\cite[Proposition~4]{ac}) and the ``degree'' of the polarization $r_b(\mm)=\mm\vert_{X_b}$ is the positive integer
\begin{equation*}
  \frac{1}{n!}\int_{X_b}(\mm\vert_{X_b})^{n}= \frac{1}{n!}\int_X \lll^{n}\mm^{n}=a
\end{equation*}
already considered in~\eqref{defa2}.

There are restrictions on the values that $a$ can take (see Theorem~\ref{th43}   for restrictions on small values of $a$ when $n=2$).\
For the moment, we prove that the existence of the Lagrangian fibration $f$ imposes strong conditions on~$X$ when~$a=1$, that is, when there is a class on $X$ that induces a principal polarization on the smooth fibers of $f$.

\subsection{The Huybrechts--Riemann--Roch polynomial}\label{subsec:RRpolynomialLagrangian}

Keeping the notation and the hypotheses as above, we show that in the case $a=1$, the Huybrechts--Riemann--Roch polynomial of $X$ is completely determined.\
The main idea of the proof is taken from~\cite{ort}: the hypothesis~$a=1$ is essential because it implies that a certain locally free sheaf of rank $a$ on~$\P^n$ can be written as $\cO_{\P^n}(d)$ for some integer $d$.

\begin{theo}\label{th21}
Let $X$ be a \hk\ manifold of dimension~$2n$ with a Lagrangian fibration $f\colon X\to \P^n$.\
Set $\lll\coloneqq c_1(f^*\cO_{\P^n}(1))\in H^2(X,\Z)$ and assume that there exists $\mm\in H^2(X,\Z)$ such that $\int_X \lll^{n}\mm^{n}=n!$.\
Then, $q_X(\lll,\mm)=\pm 1$, the quadratic form~$q_X$ is even, $c_X=(2n-1)!!$,
\begin{equation*}
P_{RR,X}(T)= \binom{\frac{T}{2}+n+1}{n},
\end{equation*}
and the sublattice $\Z\lll\oplus \Z\mm$ of $(H^2(X,\Z),q_X)$ is isomorphic to a hyperbolic plane.
\end{theo}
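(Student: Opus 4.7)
The plan is to follow the approach of R\'ios Ortiz, turning the hypothesis $a=1$ into a concrete statement about $f_*M$ and then matching it against Huybrechts--Riemann--Roch.\ Since $c_X$, $q_X$, and $P_{RR,X}$ are deformation invariants of $(X,\lll,\mm)$, I would first reduce to the case where $\mm = c_1(M)$ for a line bundle $M$ on $X$, via a standard deformation of the complex structure that preserves the Lagrangian fibration $f$.\ By Matsushita's results, $f$ is Lagrangian with abelian general fibers $X_b$, and the hypothesis $\int_X \lll^n \mm^n = n!$ forces $M\vert_{X_b}$ to be a principal polarization, so $h^0(X_b, M\vert_{X_b}) = 1$.

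The geometric crux is that $f_*M \isom \cO_{\P^n}(d)$ for some $d \in \Z$.\ Since $M$ is $f$-ample and $K_{X/\P^n}\isom f^*\cO_{\P^n}(n+1)$, relative Kodaira vanishing gives $R^if_*M = 0$ for $i\geq 1$; since $f_*M$ has generic rank $h^0(X_b, M\vert_{X_b}) = 1$ (this is precisely where $a=1$ enters) and $\P^n$ is smooth, $f_*M$ is a line bundle.\ The projection formula yields
\[
\chi(X, L^k\otimes M) = \chi(\P^n, \cO_{\P^n}(k+d)) = \binom{k+d+n}{n}\qquad\forall\, k\in\Z.
\]
Setting $u\coloneqq q_X(\lll,\mm)$ and $v\coloneqq q_X(\mm)$, Huybrechts--Riemann--Roch~\eqref{rr} together with $q_X(\lll)=0$ give $\chi(X, L^k\otimes M) = P_{RR,X}(2uk+v)$, hence the polynomial identity
\[
P_{RR,X}(2uk + v) = \binom{k+d+n}{n}.
\]

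Matching leading coefficients in $k$ recovers $c_X u^n = (2n-1)!!$, while $P_{RR,X}(0) = n+1$ translates into $\binom{-v/(2u) + d + n}{n} = n+1$.\ The equation $\binom{\xi}{n} = n+1$ has several real solutions, but Jiang's positivity of the coefficients of $P_{RR,X}$ \cite{jia} selects the solution yielding non-positive roots of $P_{RR,X}(T)$ (namely $\xi = n+1$ when $u>0$, or $\xi = -2$ when $u<0$ and $n$ is even, the two cases giving the same polynomial via the identity $\binom{-x-2}{n} = \binom{x+n+1}{n}$).\ This pins down $v = 2u(d-1)$ and
\[
P_{RR,X}(T) = \binom{\tfrac{T}{2u} + n + 1}{n}.
\]

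The main obstacle is forcing $|u| = 1$; I would exploit Lemma~\ref{lem11} combined with an elementary $p$-adic computation.\ For any $\alpha \in H^2(X,\Z)$, Lemma~\ref{lem11} gives $\binom{q_X(\alpha)/(2u) + n+1}{n} \in \Z$, and one checks that $\binom{p/q + n+1}{n} \notin \Z$ whenever $p/q$ is a rational in lowest terms with $q > 1$ (after clearing denominators, the numerator is congruent to $p^n \not\equiv 0 \pmod q$).\ Therefore $2u \mid q_X(\alpha)$ for every $\alpha\in H^2(X,\Z)$; polarizing via $2q_X(\alpha,\beta) = q_X(\alpha+\beta) - q_X(\alpha) - q_X(\beta)$ gives $u \mid q_X(\alpha,\beta)$ for all $\alpha,\beta$, and the non-divisibility of the BBF form forces $|u|=1$.\ This immediately yields $c_X = (2n-1)!!$, the evenness of $q_X$ (from $2\mid q_X(\alpha)$), and $P_{RR,X}(T) = \binom{T/2 + n+1}{n}$.\ Finally, replacing $\mm$ by $\mm + (1-d)\lll$ leaves $q_X(\lll,\mm) = \pm 1$ unchanged while making $q_X(\mm) = 0$, so $\Z\lll \oplus \Z\mm$ has Gram matrix $\bigl(\begin{smallmatrix} 0 & \pm 1 \\ \pm 1 & 0 \end{smallmatrix}\bigr)$, i.e., a hyperbolic plane.
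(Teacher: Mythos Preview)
Your approach mirrors the paper's (the R\'ios Ortiz method), and the overall architecture---including the $p$-adic divisibility argument, which is exactly the content of the paper's Lemma~\ref{star}---is correct. However, two early steps need repair.

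The assertion that $M$ is $f$-ample is unjustified: the restriction $M\vert_{X_b}$ is ample only on \emph{smooth} fibers (and only after replacing $\mm$ by $-\mm$ so that $q_X(\lll,\mm)>0$, a sign normalization you omit), whereas $f$-ampleness demands ampleness on every fiber, singular ones included, and this does not follow. The paper handles this differently: it deforms not merely to make $\mm$ algebraic, but to an $X'$ with $\NS(X')=\Z\lll\oplus\Z\mm$ and $L'$ nef (recovering a Lagrangian fibration on $X'$ via Matsushita's results on isotropic nef divisors \cite{mat:isotropic}); in this two-dimensional N\'eron--Severi group one can replace $M$ by $L^k\otimes M$ for $k\gg0$ to make it \emph{globally} ample on $X'$, and then Koll\'ar's vanishing \cite[Theorem~10.32]{kol} gives $R^if_*M=0$ for $i>0$. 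Your ``standard deformation preserving $f$'' does not control $\NS$ enough to ensure such a replacement yields an ample class. Separately, the inference ``$f_*M$ has generic rank $1$ and $\P^n$ is smooth, hence $f_*M$ is a line bundle'' is false as stated (the ideal sheaf of a point is a counterexample); the correct argument uses flatness of $f$ together with the vanishing of higher direct images and cohomology-and-base-change to conclude that $f_*M$ is locally free of rank $\chi(X_b,M\vert_{X_b})=1$.
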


For all known \hk\ manifolds $X$, the lattice $(H^2(X,\Z),q_X)$ contains a hyperbolic plane.

\begin{proof}
Changing $\mm$ into $-\mm$ is necessary, we may assume $q_X(\lll,\mm)>0$.\
Consider the universal family $(\cX,\cL)\to {\mathfrak M}_\lll$ over one component of the (non-Hausdorff) moduli space of marked hyper-K\"ahler manifolds with a fixed $(1,1)$-class $\lll$.\
The period map $\cP\colon{\mathfrak M}_\lll\to \PP(\lll^\perp\otimes\C)$ is injective over very general points and the points in an arbitrary fiber correspond to the chambers of the decomposition of the positive cone.\
For a distinguished point $0\in {\mathfrak M}_\lll$ we have $(\cX_0,\cL_0)\cong(X,L )$, where $L\coloneqq f^*\cO_{\P^n}(1)$.\
But there also exists a fiber $X^\prime\coloneqq\cX_{0^\prime}$, with $0^\prime\in{\mathfrak M}_\lll$, whose rational N\'eron--Severi group~$\NS(X^\prime)_\Q$ is generated by the classes $\lll$ and $\mm$.\
In fact, since the lattice generated by $\lll$ and~$\mm$ is saturated (Remark~\ref{prim}), the integral N\'eron--Severi group  is generated by  $\lll$ and $\mm$.\
Furthermore, replacing $0^\prime$ by another point in the same fiber over $\cP(0^\prime)$, we may assume that $L^\prime\coloneqq\cL_{0^\prime}$ is nef.\
Since $q_X(k\lll+\mm)=2kq_X( \lll,\mm)+q_X( \mm)>0$ for $k\gg 0$, the manifold  $X'$ is projective by \cite{HuyHK}.

We apply~\cite[Theorem 1.2, Claim 3.1 and Claim 3.2]{mat:isotropic}: there exists a Lagrangian fibration $f^\prime\colon X^\prime\to \P^n$ such that $f^{\prime\,*}\cO_{\P^n}(1)\cong L^\prime$.\
Upon replacing $(X,L)$ by $(X^\prime,L^\prime)$, we may, since $q_X$, $c_X$, and $P_{RR,X}(T)$ are invariant by deformation, assume that $X$ carries a line bundle~$M$ with first Chern class $\mm$.

Since $L$ is nef, $q_X(\lll,\mm)>0$, and $\NS(X)$ is generated by $\lll$ and $\mm$, we can replace $M$ with $L^k\otimes M$, for $k\gg0$, and assume that $M$ is ample on $X$.\  By \cite[Theorem~10.32]{kol}, $R^if_*M$ vanishes for $i>0$.\ Because $f$ is flat,     the sheaf $\cM\coloneqq f_*M$ on~$\P^n$ is locally free; since
the restriction of $M$ to a smooth generic fiber of $f$ defines a principal polarization, the rank of $\cM$ is  $a=1$.\
By the projection formula and~\eqref{rr}, $\cM$ satisfies
\begin{equation}\label{chi}
\forall k\in\Z\qquad\chi(\P^n, \cM(k))=\chi(X,  L^k\otimes M) =P_{RR,X}(2kq_X(\lll,\mm)+q_X(\mm)) .
\end{equation}
Following \cite[Section~3]{ort}, we write $\cM=\cO_{\P^n}(d)$ for some integer $d$ and, from~\eqref{chi}, we deduce
\[
\forall k\in\Z\qquad P_{RR,X}(2kq_X(\lll,\mm)+q_X(\mm))=\chi(\P^n,\cO_{\P^n}(d+k))=\binom{d+k+n}{n},
\]
so that
\begin{equation}\label{bin}
P_{RR,X}(T)= \binom{d+\frac{T-q_X(\mm)}{2 q_X(\lll,\mm)}+n}{n}.
\end{equation}
Set $\gamma\coloneqq \frac{ q_X(\mm)}{2 q_X(\lll,\mm)} $.\ Since $P_{RR,X}(0)=n+1$, either $d-\gamma=1$, or $n$ is even and $d-\gamma=-n-2$.\footnote{The equation $\binom{x+n}{n}= n+1$ is equivalent to the monic equation $\prod_{i=1}^n(x+i)= (n+1)! $, so any rational solution is in fact integral, and the only integral solutions are $x=1$ and, when $n$ is even, $x=-n-2 $.}\
Since the coefficients of $P_{RR,X}$ are positive and the coefficient of $T^{n-1}$ in~\eqref{bin} is a positive multiple of $ n+2(d-\gamma) +1$, the latter case is ruled out.\
So $\gamma$ is an integer and $d=\gamma+1$.\ Replacing $M$ by $L^{-\gamma}\otimes M$, we may  assume $\gamma=0$ and $d=1$, hence $q_X(\mm)=0$, so that~\eqref{bin} becomes
\begin{equation}\label{bin2}
P_{RR,X}(T)= \binom{\frac{T}{2 q_X(\lll,\mm)}+n+1}{n}.
\end{equation}
By  Lemma~\ref{star} below (applied with $c=n!$, $c'=1$, and $q=2 q_X(\lll,\mm)$), we get    $q_X(\lll,\mm)=1$ and the quadratic form~$q_X$ is even.\
The value of $c_X$ is then derived from~\eqref{pol} and the polynomial $P_{RR,X}(T)$ from~\eqref{bin2}.
\end{proof}

It remains to prove the arithmetical result   used at the end of the proof above.\ We prove a bit more  than what we actually used above but we will need this stronger statement for the proof of Theorem~\ref{th43}.

\begin{lemm}\label{star}
Let $X$ be a \hk\ manifold of dimension $2n$.\ Assume that there are positive integers $c$, $c'$, and $q$  such that $c'$ is divisible by no nontrivial $n$th powers and the polynomial $P(T)\coloneqq \frac{c}{c'}P_{RR,X}(qT)$ is  monic  with integral coefficients.\ Then, either $q=1$, or $q=2$  and the quadratic form $q_X$ is even.
\end{lemm}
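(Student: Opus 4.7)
The plan is to translate the monicity and integrality of $P$ into divisibility constraints on $P_{RR,X}(r)$ for $r$ in the image of $q_X$, and then exploit the non-divisibility of $q_X$. Writing $P(T)=T^n+b_{n-1}T^{n-1}+\cdots+b_0$ with $b_i\in\Z$, I would rewrite
\[
P_{RR,X}(T)=\frac{c'}{cq^n}\,\tilde P(T),\qquad \tilde P(T)\coloneqq q^nP(T/q)=T^n+b_{n-1}qT^{n-1}+\cdots+b_0q^n,
\]
which is monic with integer coefficients. By Lemma~\ref{lem11}, $P_{RR,X}(r)\in\Z$ for every $r$ in the image of $q_X$, so the hypothesis becomes the purely arithmetic statement $cq^n\mid c'\tilde P(r)$ for all such $r$.

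Next I would determine which residue classes occur in this image. From the polarization identity $q_X(\alpha+\beta)-q_X(\alpha)-q_X(\beta)=2q_X(\alpha,\beta)$ together with the primitivity of the bilinear form $q_X(\cdot,\cdot)$ (the non-divisibility hypothesis), the gcd $d_X\coloneqq\gcd\{q_X(\alpha):\alpha\in H^2(X,\Z)\}$ divides $2$, so $d_X\in\{1,2\}$, with $d_X=2$ precisely when $q_X$ is even. Thus if $d_X=1$, for every prime $p$ some $r$ in the image satisfies $v_p(r)=0$; if $d_X=2$, the same holds for every odd $p$, and moreover there is $r$ in the image with $v_2(r)=1$ (since $q_X/2$ has primitive image).

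Now suppose a prime $p$ divides $q$, set $k\coloneqq v_p(q)\geq 1$, and choose $r$ in the image with $v_p(r)=0$. Every non-leading term of $\tilde P$ is divisible by $q$, so $v_p(\tilde P(r))=v_p(r^n)=0$, and $cq^n\mid c'\tilde P(r)$ yields $nk\leq v_p(c')\leq n-1$, contradicting $k\geq 1$. In the odd case ($d_X=1$) this rules out every prime, forcing $q=1$; in the even case it rules out every odd prime, so $q=2^k$. The only remaining point, which I expect to be the main (if minor) subtlety, is the $2$-adic bound: taking $r$ with $v_2(r)=1$, the term $b_{n-i}q^ir^{n-i}$ of $\tilde P(r)$ has $2$-adic valuation at least $ik+(n-i)=n+i(k-1)$, strictly greater than $v_2(r^n)=n$ whenever $k\geq 2$ and $i\geq 1$; hence $v_2(\tilde P(r))=n$ exactly. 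The divisibility then forces $nk\leq v_2(c')+n\leq 2n-1$, giving $k<2$. Therefore $k\leq 1$ and $q\in\{1,2\}$, with $q_X$ necessarily even when $q=2$.
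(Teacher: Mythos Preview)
Your proof is correct and follows essentially the same strategy as the paper's: use the integrality of $P_{RR,X}$ on the image of $q_X$ together with the monicity of $P$ to force $q$ to divide all values $q_X(\alpha)$, then invoke the primitivity of the bilinear form. The paper's execution is more streamlined --- it writes $q_X(\alpha)/q=r/s$ in lowest terms and observes directly that $s^nP(r/s)\equiv r^n\pmod{s}$ is coprime to $s$, whence $s^n\mid c'$ and so $s=\pm1$, giving $q\mid q_X(\alpha)$ in one stroke --- whereas your argument unpacks this into a prime-by-prime valuation analysis with a separate treatment of the $2$-adic case when $q_X$ is even; but the underlying idea is the same.
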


\begin{proof}
Let $\alpha\in H^2(X,\Z)$ and write $\frac{ q_X(\alpha)}{q}=:\frac{r}{s}$, where $r$ and $s$ are relatively prime integers.\ One has
\begin{equation*}
    P_{RR,X}( q_X(\alpha))=\frac{c'}c\, P\Bigl(\frac{q_X(\alpha)}{q}\Bigr)
    =\frac{c'}c\, P\Bigl(\frac{r}{s}\Bigr)
\end{equation*}
and this is an integer by Lemma~\ref{lem11}.\ Since $P(T)$ is monic  with integral coefficients, $s^nP(\frac{r}{s})$ is an integer congruent to $r^n$ modulo $s$, hence prime to $s^n$.\ But $c's^nP(\frac{r}{s})= cs^nP_{RR,X}( q_X(\alpha))$ is divisible by $s^n$, hence so is $c'$.\ Our hypothesis implies $s=\pm1$, which proves that  $q$ divides all values that $q_X$ takes on $H^2(X,\Z)$.\
Since the  integral bilinear form associated with $q_X$ is  not  divisible, either $q=1$, or $q=2$  and the quadratic form $q_X$ is even.
\end{proof}

\subsection{The exceptional divisor}\label{subsec:ExceptionalDivisor}

We keep our \hk\ manifold $X$ of dimension $2n$ with a Lagrangian fibration $f\colon X\to \P^n$ and we set as above $L\coloneqq f^*\cO_{\P^n}(1)$, with first Chern class $\lll\in H^2(X,\Z)$.\
We assume further that there exists a line bundle $M$ on $X$ whose class~$\mm$ satisfies $\int_X \lll^{n}\mm^{n}=n!$ and $q_X(\lll,\mm)>0$.\

As in the proof of Theorem~\ref{th21}, $X$ is projective and we may assume  $q_X(\lll,\mm)=1$ and $q_X(\mm)=0$.\
Then $f_*M=\cO_{\P^n}(1)$ and $f_*(L^{-1}\otimes M )=\cO_{\P^n}$, hence the linear system  $| L^{-1}\otimes M |$ contains a single (effective) divisor~$E$  which still induces a principal polarization on the smooth fibers of $f$.\
It satisfies $q_X([E])=-2$ and $q_X([E],\mm)=-1$; in particular, $M$ is not nef.

\begin{lemm}\label{lem24}
Assume that $\NS(X)=\Z \lll \oplus \Z \mm$.
Then the divisor $E\in | L^{-1}\otimes M |$ is irreducible and reduced.
\end{lemm}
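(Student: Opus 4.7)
My plan is to write $E = E_0 + E_v$ with $E_0$ prime and dominant over $\P^n$ (of multiplicity $1$) and $E_v$ an effective vertical divisor, and then show $E_v = 0$.

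For the first step, I would restrict to a generic smooth fiber $X_b$ of $f$. Since $a = 1$, the class $\mm|_{X_b}$ equals the primitive principal polarization $\theta$ on $X_b$, and by the rank-one statement recalled in Section~\ref{sec:LagrangianFibration} the restriction map $H^2(X,\Z) \to H^2(X_b,\Z)$ has image inside $\Z\theta$.\ Because $[E]|_{X_b} = \mm|_{X_b} - \lll|_{X_b} = \theta$, the divisor $E|_{X_b}$ is the unique effective representative of the principal polarization.\ Writing $E = \sum a_i E_i$ into prime components with $a_i \geq 1$, each restriction $[E_i|_{X_b}]$ lies in $\Z_{\geq 0}\,\theta$, say $[E_i|_{X_b}] = n_i \theta$, with $n_i = 0$ iff $E_i$ is vertical (i.e.\ $f(E_i) \subsetneq \P^n$).\ The relation $\sum a_i n_i = 1$ then forces exactly one dominant $E_0$ with $a_0 = n_0 = 1$; collecting the remaining (vertical) primes into $E_v$, we obtain the decomposition $E = E_0 + E_v$.

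For the second step, I factor the unique defining section $s \in H^0(X, L^{-1}\otimes M)$ as $s = s_0 t$, with $s_0$ defining $E_0$ and $t$ defining $E_v$.\ The identity $h^0(L^{-1}\otimes M) = 1$ forces $h^0(\cO_X(E_v)) = 1$: any $t' \in H^0(\cO_X(E_v))$ produces $s_0 t' \in H^0(L^{-1}\otimes M)$, which must be proportional to $s = s_0 t$, whence $t'$ is proportional to $t$.\ Since $E_v$ is vertical, $\cO_X(E_v)$ restricts trivially to every smooth fiber of $f$.\ I would then identify $\cO_X(E_v) \cong L^c$ for some integer $c \geq 0$: a line bundle on $X$ that is fiberwise trivial on the smooth fibers of the Lagrangian fibration descends, by $\Pic(\P^n) = \Z\lll$ and standard relative Picard arguments, to a pullback from $\P^n$, with effectivity forcing $c \geq 0$.\ Using Leray and the computation $f_*(L^c) = \cO_{\P^n}(c)$, we get $h^0(L^c) = \binom{c+n}{n}$; comparison with $h^0(\cO_X(E_v)) = 1$ forces $c = 0$ and hence $E_v = 0$.\ It then follows that $E = E_0$ is prime (irreducible) of multiplicity $1$ (reduced).

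The main obstacle I anticipate is justifying $\cO_X(E_v) \cong L^c$ at arbitrary Picard rank, since a Lagrangian fibration may have reducible fibers over its discriminant whose components contribute vertical divisor classes not proportional to $\lll$.\ The cleanest route is to reduce, by deformation of $X$ as in the proof of Theorem~\ref{th21}, to the case $\Pic(X)_\Q = \Q\lll \oplus \Q\mm$; there $\lll^\perp \cap \NS(X) = \Z\lll$ (since $q_X(\lll,\mm) = 1$), so the identification $\cO_X(E_v) \cong L^c$ is immediate from $[E_v] \in \lll^\perp$ and $H^1(X,\cO_X) = 0$.
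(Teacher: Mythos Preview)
Your proof is correct, and the reduction to $\NS(X)_\Q = \Q\lll \oplus \Q\mm$ you flag in the last paragraph is precisely the hypothesis the paper is working under as well (via ``as in the proof of Theorem~\ref{th21}'' at the opening of Section~\ref{subsec:ExceptionalDivisor}), so there is no genuine gap. The paper organizes the argument differently: rather than isolating $E_v$ and computing $h^0(\cO_X(E_v))$, it first shows $|L^{-k}\otimes M| = \emptyset$ for every $k \geq 2$ (any divisor there would have negative $q_X$-pairing with $E$, hence contain $E$, forcing a section of the empty $|L^{-(k-1)}|$), then observes that a vertical prime component $V$ of $E$ would have class $c\lll$ with $c \geq 1$, placing the effective divisor $E - V$ in the forbidden system $|L^{-(c+1)}\otimes M|$; irreducibility and reducedness then follow from $q_X([E],\lll) = 1$. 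Your fibre-restriction step is really this last identity read through $r_b(\alpha) = q_X(\alpha,\lll)\,\theta$, just performed first instead of last, which makes the geometry of the principal polarization more visible. One simplification: your Step~2 is not needed. Once you know $[E_v] = c\lll$ with $c \geq 0$, the effectivity of $E_0 = E - E_v \in |L^{-(c+1)}\otimes M|$ together with $h^0(X, L^{-(c+1)}\otimes M) = h^0(\P^n, \cO_{\P^n}(-c))$ (immediate from $f_*M = \cO_{\P^n}(1)$ and the projection formula) already forces $c = 0$.
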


\begin{proof}
For any integer $k\ge 2$, the linear system $| L^{-k}\otimes M |$ is empty, because any divisor in that linear system would have negative $q_X$-intersection with $E$, and would therefore contain~$E$, but the linear system $|L^{-k+1}|$ is empty.\
It follows that $E$ has no vertical components.\ The fact that $E$ is   irreducible and reduced then follows from the relation $q_X([E],\lll)=1$.
\end{proof}

Since $q_X( [E])<0$, the prime divisor $E$ is therefore exceptional in the sense of \cite[d\'ef.~3.10]{bou} and
\cite[Definition~3.2]{mar} hence it spans an extremal ray in the effective cone $\Eff(X)$.\
Moreover, by~\cite[prop.~1.4 and rem.~4.3]{dru}, there is a birational isomorphism $\phi\colon X\isomdra X' $ (with~$X'$ smooth \hk) and a projective divisorial contraction $c\colon X'\to Y$ with exceptional divisor $\phi(E)$; moreover, the general fibers of $c\vert_{\phi(E)}$ are either smooth rational curves, or unions of two smooth rational curves meeting transversely at one point.\
The divisor $E$ is uniruled, its class $-\lll+\mm$ is primitive (because $q_X(\lll,[E])=1$), the reflection
\[
\alpha \longmapsto  \alpha+ q_X(\alpha,-\lll+\mm) (-\lll+\mm)
\]
is integral and a monodromy operator that permutes $\lll$ and $\mm$.\
Finally,  the class in $H_2(X',\Z)\isom H^2(X',\Z)^\vee\isom H^2(X,\Z)^\vee$ of a general (curve) fiber of $\phi(E)\to c(\phi(E))$ is given by the linear form $q_X(-\lll+\mm,\bullet)$; moreover, since $q_X(-\lll+\mm,\lll)=1$, this general fiber cannot be the union of two homologous curves, hence it is a smooth rational curve (\cite[Corollary~3.6]{mar}).\

\subsection{Examples}\label{subsec:SYZKnownExs}
 The next two examples show that Theorem~\ref{th21} applies to  \hk\ manifolds of K3$^{[n]}$ deformation type (Example~\ref{ex:K3n}) or of $\mathrm{OG10}$ deformation type (Example~\ref{ex:OG10}).

\begin{exam}\label{ex:K3n}
Let $S$ be a K3 surface with a primitive polarization $\hh_S$ of degree $\hh_S^2=2d$ and set $n=d+1$ (this is the genus of any curve in the linear system $ |\hh_S|$).\
Assume that the pair $(S,\hh_S)$ is very general, so that $\NS(S)=\Z \hh_S$.\
The smooth projective moduli space  $\M_0(S)\coloneqq M_{S}(0,\hh_S,0)$ parametrizes pairs consisting of a curve $C\in|\hh_S|$ and a torsion-free, rank-1 coherent sheaf on~$C$ of degree $n-1$, considered as a (torsion) sheaf on $X$.\
It is a \hk\ variety of K3$^{[n]}$ deformation type.\
There is a Lagrangian fibration
$
f\colon \M_0(S)\lra |\hh_S|=\P^{n}
$
that takes a sheaf to its support.\
The fiber of a smooth $C\in  |\hh_S|$ is the Jacobian $J^{n-1}(C)$.

The lattice $\NS(\M_0(S))$   is spanned by two isotropic vectors
$\lll\coloneqq c_1(f^*\cO_{\P^{n}}(1))$ and $  \mm$
 which satisfy $ q_X(\lll, \mm)=1$.\  Since the Fujiki constant is $ (2n-1)!!$, formula~\eqref{defia} gives $a = q_X(\lll, \mm)^n=1$.\  The fibers $J^{n-1}(C)$ have canonical theta divisors that fit together to define an effective divisor in $\M_0(S)$ which is the exceptional divisor $E$ from Section~\ref{subsec:ExceptionalDivisor}.
\end{exam}

\begin{exam}[R\'ios Ortiz]\label{ex:OG10}
According to~\cite{lsv}, there is a \hk\ manifold~$X$ of OG10 deformation type with a Lagrangian fibration \mbox{$f\colon X\to \P^5$} and an $f$-ample effective divisor~$\Theta$ on~$X$ that restricts to a principal polarization on the smooth fibers of $f$ (\cite[Proposition~5.3]{lsv}).\
Set $L\coloneqq f^*\cO_{\P^n}(1)$ and $M\coloneqq\cO_X(\Theta)$.\
Theorem~\ref{th21} shows that the Fujiki constant $c_X$ is equal to $9!!=945$ (it was originally computed in~\cite{rap}) and that
$
P_{RR,X}(T)= \binom{ \frac{T }{2  }+6}{5}$ (\cite{ort}).
\end{exam}

The next two examples deal with the other   known types of  \hk\ manifolds.

\begin{exam}\label{ex:Kumn}
 Let $A$ be an abelian surface with a polarization~$\hh_A$ of type $(1,d)$, with $ d\ge 3$.\
Assume that the pair $(A,\hh_A)$ is very general, so that $\NS(A)=\Z \hh_A$.\
The smooth projective moduli space  $\M_0(A)\coloneqq M_{A}(0,\hh_A,0)$ parametrizes pairs consisting of a curve $C\subset A$ with class~$\hh_A$ and a torsion-free, rank-1 coherent sheaf on~$C$ of degree $d$.\
The fibers of its (surjective) Albanese map $\M_0(A)\to A\times \widehat A$   are all isomorphic to the same \hk\ manifold $\K_0(A)$ of dimension $2n\coloneqq 2d-2$ which is of generalized Kummer deformation type (\cite[Theorem~0.2(1)]{yos:moduli}).\
There is a Lagrangian fibration
$
f\colon \K_0(A) \to \P^{n}
$
that takes a sheaf to its support.\
The fiber of a smooth $C\num \hh_A$ is the kernel of the Abel--Jacobi map $J^{d}(C)\to A$.\

By~\cite[Theorem 0.2(2)]{yos:moduli}, the lattice $\NS(\K_0(A))$ is spanned by the two isotropic vectors $\lll\coloneqq c_1(f^*\cO_{\P^{n}}(1))$ and $  \mm=c_1(M)$ which satisfy $q_X(\lll, \mm)=1$.\
Since the Fujiki constant is $(n+1) (2n-1)!!$, formula~\eqref{defia} gives $a=n+1$.
\end{exam}

\begin{exam}
Examples of \hk\ manifolds of  OG6 deformation type with a Lagrangian fibration are described in \cite{rap}.
\end{exam}


\section{Conjecture~\ref{conjnum} for hyper-K\"ahler fourfolds}\label{sec:IsotropicClasses}

The main result of  this section is the proof of  Conjecture~\ref{conjnum} in dimension $4$ (see Theorem~\ref{th43a}).\ It is a simple consequence of the work \cite{gua} of Guan  who gave a list of possible Betti numbers for hyper-K\"ahler fourfolds.

Let $X$ be a \hk\ fourfold.\ Following \cite[Section~2.4]{jia}, we
 set
\begin{equation}\label{ax}
A_X\coloneqq\int_X\td^{1/2}(X)= \frac 1 {5760} ( 7c_2^2(X) -4c_4(X) ) = \frac 18 \Bigl(7-\frac 1 {432} c_4(X)\Bigr).
\end{equation}
It is known that $c_4(X)$ is divisible by 12 (see, for example, \cite[Proposition 2.4]{gr}), hence $288 A_X$ is an integer.

\begin{lemm}\label{lemmguan}
Let $X$ be a \hk\ fourfold.\ Then,
\begin{enumerate}[{\rm (a)}]
\item\label{aax} either $b_2(X) =23$,  $b_3(X) =0$, and the Hodge numbers of $X$ are those of the Hilbert square of a K3 surface, in which case $ c_4(X)= 324$ and $A_X=\frac{25}{32}$,
\item\label{bax} or $b_2(X) \le 8$, in which case $c_4(X)\le 144$ and  $\frac{5}{6}\le A_X\le \frac{131}{144}$.
 \end{enumerate}
In particular, if $t\in [0,\frac13)$, then $4A_X-t $ is an integer only when $t=\frac18$ and $A_X=\frac{25}{32}$.
\end{lemm}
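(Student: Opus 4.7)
The plan is to derive the dichotomy directly from Guan's classification of the Betti numbers of hyper-K\"ahler fourfolds, and then to carry out short arithmetic verifications in each case.\ Guan's theorem asserts that $b_2(X)\in\{3,4,5,6,7,8,23\}$; moreover, the case $b_2(X)=23$ forces $b_3(X)=0$ together with the full Hodge diamond coinciding with that of the Hilbert square $S^{[2]}$ of a K3 surface, while the case $b_2(X)\le 8$ forces the explicit upper bound $c_4(X)\le 144$ (this last bound in fact follows immediately from Guan's Salamon-type identity $c_4(X)=48+12b_2(X)-3b_3(X)$ together with $b_3(X)\ge 0$).\ This split yields the alternatives \textup{(a)} and \textup{(b)} at once.

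I would then substitute into \eqref{ax} in each case.\ In case \textup{(a)}, the matching of Hodge numbers gives $c_4(X)=\chi_{\mathrm{top}}(S^{[2]})=324$ by G\"ottsche's formula (the coefficient of $q^2$ in $\prod_{m\ge 1}(1-q^m)^{-24}$ equals $\binom{25}{2}+24=324$); plugging in yields $A_X=\frac{1}{8}\bigl(7-\frac{324}{432}\bigr)=\frac{25}{32}$.\ In case \textup{(b)}, $c_4(X)$ is a positive multiple of $12$ bounded above by $144$, so $12\le c_4(X)\le 144$; monotonicity of \eqref{ax} in $c_4$ then gives $A_X\in[\frac{5}{6},\frac{251}{288}]\subset[\frac{5}{6},\frac{131}{144}]$.

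For the final assertion, I rewrite $4A_X=\frac{7}{2}-\frac{c_4(X)}{864}$ and treat the two cases separately.\ In case \textup{(a)}, $4A_X=\frac{25}{8}=3+\frac{1}{8}$, so $4A_X-t\in\Z$ with $t\in[0,\frac{1}{3})$ forces $t=\frac{1}{8}$.\ In case \textup{(b)}, combining $A_X\ge\frac{5}{6}$ with $t<\frac{1}{3}$ yields $4A_X-t>\frac{10}{3}-\frac{1}{3}=3$, while $A_X\le\frac{131}{144}$ yields $4A_X-t\le\frac{131}{36}<4$; hence $4A_X-t$ lies strictly between two consecutive integers and cannot itself be an integer.

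The only nontrivial step is pinning down the precise form of Guan's results—specifically the restriction $b_2(X)\in\{3,\dots,8,23\}$, the forcing of the K3$^{[2]}$ Hodge diamond in the case $b_2(X)=23$, and the Salamon identity that produces $c_4(X)\le 144$ when $b_2(X)\le 8$; everything else reduces to direct substitution into \eqref{ax} and elementary arithmetic.
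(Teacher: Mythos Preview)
Your proposal is correct and follows essentially the same approach as the paper: both invoke Guan's classification of possible Betti numbers together with the Salamon relation $c_4(X)=3(4b_2(X)+16-b_3(X))$, then substitute into~\eqref{ax}. The paper's proof is a single sentence deferring all details to these references, whereas you spell out the arithmetic (G\"ottsche's formula for $c_4=324$, the explicit interval argument for the final assertion), but the underlying logic is identical.

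One small remark: your claim that $c_4(X)$ is \emph{positive} (hence $\ge 12$) is not justified by $b_3\ge 0$ alone---it requires the finer constraints on $b_3$ contained in Guan's list. Since you already cite Guan, this is not a genuine gap, and in any case your final-assertion argument only uses the weaker bound $A_X\le\frac{131}{144}$ from the statement, so the conclusion is unaffected.
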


\begin{proof}
This follows from~\eqref{ax}, the relation $c_4(X)=3(4b_2(X)+16-b_3(X))$, and the list of possible Betti numbers given in  \cite[Main Theorem]{gua}.
\end{proof}

We now introduce classes $\lll,\mm\in H^2(X,\Z)$ such that $q_X(\lll)=0$ and define $a=\frac12\int_X\lll^2\mm^2$ as in~\eqref{defa2}.\ By Lemma~\ref{lemmaa} and~\eqref{defia}, it is a  nonnegative integer.

\begin{lemm}\label{lemmax}
The number $\sqrt{2aA_X} $ is   rational.
\end{lemm}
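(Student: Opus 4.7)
The plan is to show that $2aA_X$ equals the square of an explicit rational number, whence $\sqrt{2aA_X}\in\Q$.

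First, by the polarized Fujiki relation~\eqref{eq:Fujiki3} applied to $\alpha_1=\alpha_2=\lll$, $\alpha_3=\alpha_4=\mm$, and using $q_X(\lll)=0$, one gets
\[
a \;=\; \tfrac{1}{2}\!\int_X\lll^2\mm^2 \;=\; \tfrac{c_X}{3}\,q_X(\lll,\mm)^2.
\]
Consequently $2aA_X=q_X(\lll,\mm)^2\cdot\tfrac{2c_XA_X}{3}$, and since $q_X(\lll,\mm)\in\Z$, it suffices to show that $\tfrac{2c_XA_X}{3}$ is a perfect rational square.

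The crucial input is the identity
\[
a_1^{\,2} \;=\; \tfrac{2c_XA_X}{3}, \qquad \text{where } P_{RR,X}(T)=\tfrac{c_X}{24}T^2+a_1 T+3.
\]
This is the four-dimensional instance of a Nieper-Wi\ss kirchen--type formula: for any hyper-Kähler manifold $X$ of dimension $2n$,
\[
\int_X \hat A^{1/2}(X)\,e^\alpha \;=\; A_X\bigl(1+\lambda\,q_X(\alpha)\bigr)^n
\]
for some $\lambda=\lambda(X)\in\Q$. Concretely, in dimension four, a direct Hirzebruch--Riemann--Roch computation (using $c_1(X)=0$ and the Fujiki-type relations $\int_X\alpha^4=c_X\,q_X(\alpha)^2$ and $\int_X c_2(X)\alpha^2=24\,a_1\,q_X(\alpha)$) gives $\int_X\hat A^{1/2}(X)\,e^\alpha = A_X+(a_1/2)\,q_X(\alpha)+(c_X/24)\,q_X(\alpha)^2$; equating this with $A_X(1+\lambda q_X(\alpha))^2$ and eliminating $\lambda$ yields precisely $a_1^{\,2}=\tfrac{2c_XA_X}{3}$.

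Combining the two steps,
\[
2aA_X \;=\; q_X(\lll,\mm)^2\cdot\tfrac{2c_XA_X}{3} \;=\; \bigl(a_1\,q_X(\lll,\mm)\bigr)^2,
\]
so $\sqrt{2aA_X}=|a_1\,q_X(\lll,\mm)|\in\Q$, as $a_1\in\Q$ and $q_X(\lll,\mm)\in\Z$. The hard part of the plan is the characteristic class identity $a_1^{\,2}=\tfrac{2c_XA_X}{3}$; the Fujiki manipulation in step one and the final algebra are routine. Note that this identity is not a consequence of integrality of $P_{RR,X}$ alone, since neither $c_X$ nor $a_1$ is pinned down by the topological data constrained by Lemma~\ref{lemmguan}: the Nieper-Wi\ss kirchen formula is the genuinely geometric input.
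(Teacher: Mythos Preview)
Your proof is correct and follows the same route as the paper. The paper obtains $c_Xq_X(\lll,\mm)^2=3a$ from the polarized Fujiki relation and then invokes \cite[Lemma~5.7]{jia} for the identity $a_1=\sqrt{\tfrac{2}{3}c_XA_X}$, concluding that $\sqrt{2aA_X}=a_1\,q_X(\lll,\mm)\in\Q$; you derive the same identity $a_1^2=\tfrac{2}{3}c_XA_X$ yourself by expanding $\int_X\td^{1/2}(X)\,e^\alpha$ and matching it with the Nieper--Wi{\ss}kirchen perfect-square form, which is exactly what underlies the cited lemma.
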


\begin{proof}
By~\eqref{defia}, we have $c_Xq_X(\lll,\mm)^2=3a$.\ Using Section~\ref{subsec:RRpolynomial} and~\cite[Lemma~5.7]{jia}, we obtain
\begin{equation}\label{prr}
   P_{RR,X}(T)=\frac{c_X}{24}T^2+ T\sqrt{\frac23c_XA_X} +3=\frac{a}{8}\Bigl(\frac{T}{q_X(\lll,\mm)}\Bigr)^2+ \sqrt{2aA_X}\Bigl(\frac{T}{q_X(\lll,\mm)}\Bigr) +3.
\end{equation}
This implies that $\sqrt{2aA_X} $ is a rational number.
\end{proof}

We are now ready to prove (a stronger form of) Conjecture~\ref{conjnum} (which corresponds to the case $a=1$) in dimension 4.\ Analogous, but weaker,  results for low values $a$ will be given in Theorem~\ref{th43}.

\begin{theo}\label{th43a}
Let $X$ be a  \hk\ fourfold with classes~$\lll,\mm\in H^2(X,\Z)$ such that $\int_X \lll^{4}=0$
and   $\int_X \lll^{2}\mm^{2}=2$.\
 The Chern and Hodge numbers of $X$ are those of the Hilbert square of a K3 surface  and all the  conclusions of Theorem~\ref{th21} hold.
\end{theo}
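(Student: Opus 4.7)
The plan is to exploit the rationality of $\sqrt{2A_X}$ given by Lemma~\ref{lemmax}, together with the integrality of $P_{RR,X}$ on $q_X$-values (Lemma~\ref{lem11}) applied to the sublattice $\Z\lll \oplus \Z\mm$, in order to pin down $A_X$ modulo~$\Z$ and trigger the ``in particular'' clause of Lemma~\ref{lemmguan}.  Once $A_X = 25/32$ is forced, Lemma~\ref{star} combined with the non-divisibility of $q_X$ will yield the remaining conclusions of Theorem~\ref{th21}.

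Setting $q_0 \coloneqq q_X(\lll,\mm) \in \Z$ (nonzero by~\eqref{pol}, positive after possibly replacing $\mm$ by $-\mm$), $s \coloneqq q_X(\mm)$, and $\beta \coloneqq \sqrt{2A_X} \in \Q$, formula~\eqref{prr} reads $P_{RR,X}(T) = T^2/(8q_0^2) + \beta T/q_0 + 3$ and gives $c_X q_0^2 = 3$.  I would then evaluate $P_{RR,X}$ on the arithmetic progression $q_X(u\lll + \mm) = 2q_0 u + s$ for $u \in \Z$: by a short expansion, Lemma~\ref{lem11} translates into two conditions---namely, $n \coloneqq 4\beta + s/q_0$ is an \emph{odd} integer, and the constant term $C \coloneqq P_{RR,X}(s) = s^2/(8q_0^2) + \beta s/q_0 + 3$ lies in~$\Z$.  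Substituting $\beta = (nq_0 - s)/(4q_0)$ into $C$ and using $(nq_0 - s)^2 = 16 q_0^2 \beta^2 = 32 q_0^2 A_X$, the second condition collapses to
\[
8(C - 3) = n^2 - 32 A_X \in 8\Z.
\]
Since $n^2 \equiv 1 \pmod 8$ ($n$ odd), this is exactly $4A_X - 1/8 \in \Z$.  Lemma~\ref{lemmguan} with $t = 1/8 \in [0, 1/3)$ then forces $A_X = 25/32$, putting $X$ in case~(a): $b_2(X) = 23$, $b_3(X) = 0$, Hodge numbers as for $S^{[2]}$, and $c_4(X) = 324$; formula~\eqref{ax} also recovers $c_2(X)^2 = 828$, so all Chern and Hodge numbers match those of~$S^{[2]}$.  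I expect this mod-$8$ step---using the odd integer $n$ to align the integrality constraint with the narrow window of Lemma~\ref{lemmguan}---to be the main obstacle.

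With $\beta = 5/4$ now fixed, the polynomial $8\, P_{RR,X}(q_0 T) = T^2 + 10 T + 24$ is monic with integer coefficients, so Lemma~\ref{star} (with $c = 8$, $c' = 1$, parameter $q_0$) forces $q_0 \in \{1, 2\}$.  The case $q_0 = 2$ can be ruled out: there $P_{RR,X}(T) = (T + 8)(T + 12)/32$ being integer at every even value $T = q_X(\alpha)$ forces $T \in 4\Z$, so $q_X$ would be divisible by~$4$, contradicting its non-divisibility.  Hence $q_0 = 1$, so $c_X = 3$, $q_X(\lll,\mm) = \pm 1$, and $P_{RR,X}(T) = \binom{T/2 + 3}{2}$.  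A second application of Lemma~\ref{star} to $2\, P_{RR,X}(2T) = T^2 + 5T + 6$ (with parameter $q = 2$) then gives that $q_X$ is even.  Finally, $s = n - 4\beta = n - 5$ is even, so replacing $\mm$ by $\mm - (s/2)\lll$ achieves $q_X(\mm) = 0$ while preserving $q_X(\lll, \mm) = 1$, realizing $\Z\lll \oplus \Z\mm$ as a hyperbolic plane and completing all the conclusions of Theorem~\ref{th21}.
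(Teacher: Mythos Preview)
Your proof is correct and follows essentially the same strategy as the paper: both arguments use formula~\eqref{prr} and the integrality of $P_{RR,X}$ on the progression $q_X(u\lll+\mm)$ to produce an odd integer whose square forces $4A_X-\tfrac18\in\Z$, then invoke Lemma~\ref{lemmguan} and Lemma~\ref{star}. The only cosmetic differences are that the paper normalizes $\gamma=q_X(\mm)/q_X(\lll,\mm)\in(-1,1]$ upfront (so that $\gamma$ even immediately gives $q_X(\mm)=0$) and applies Lemma~\ref{star} once with $q=2q_X(\lll,\mm)$ to obtain $q_X(\lll,\mm)=1$ and $q_X$ even simultaneously, whereas you apply Lemma~\ref{star} twice and eliminate $q_0=2$ by a separate divisibility argument.
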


\begin{proof}
Changing   $\mm$ into $\pm\mm+r\lll$, for some $r\in\Z$, if necessary, we may assume $-q_X(\lll,\mm)< q_X(\mm)\le q_X(\lll,\mm)$ or, equivalently, $\gamma\coloneqq \frac{q_X(\mm)}{q_X(\lll,\mm)}\in (-1,1] $.\

As in~\eqref{defP}, we introduce the polynomial
$$P(k)\coloneqq P_{RR,X}(q_X(k\lll+ \mm))=P_{RR,X}(2kq_X(\lll,\mm)+q_X(\mm)).$$
Using~\eqref{prr}, we compute
\begin{align*}
 P(k)&= \frac{1}{8}(2k+\gamma)^2+ \sqrt{2A_X}(2k+\gamma) +3  \\
 &=\frac{1}{2}k^2+\Bigl( \frac{1}{2}\gamma+2\sqrt{2A_X} \Bigr)k+\frac{1}{8}\gamma^2+ \gamma\sqrt{2A_X}+3\\
 &=:\frac{1}{2}k^2 +  b  k +c.
\end{align*}
Since $P$ takes integral values on integers, $\frac{1}2+b=P(1)-P(0)$ and $c=P(0)$ are integers; we write $b=\frac12+b'$, with $b'\in\Z$.

We also note that
\begin{align*}
    4A_X-\frac{b^2}{2}&= 4A_X-\frac{1}{2}\Bigl( \frac{1}{2}\gamma+2\sqrt{2A_X} \Bigr)^2\nonumber\\
    &=4A_X-\frac{1}{2}\Bigl( \frac{1}{4}\gamma^2+2\gamma\sqrt{2A_X}+8A_X \Bigr)\\
    &=3-c\nonumber
\end{align*}
is an integer, hence so is $4A_X-\frac{1}{8}$.\ By Lemma~\ref{lemmguan}, this is only possible when $A_X=\frac{25}{32}$ and the Chern and Hodge numbers of $X$ are those of the Hilbert square of a K3 surface.\ We also have
$$\frac12+b'=b=\frac{1}{2}\gamma+\frac52,$$
which implies that $\gamma$ is an even integer.\ Since $\gamma \in (-1,1] $, we obtain $\gamma=0$,  hence $q_X(\mm)=0$, $b=\frac52$, and $c=3$.\ By  the very definition~\eqref{defP} of $P$, one has
\[
P_{RR,X}(2kq_X(\lll,\mm))=P(k)= \frac{1}{2}(k^2 +  5  k +6)=\binom{k+3}{2}.
\]
By Lemma~\ref{star} (applied with $c=2$, $c'=1$, and $q=2q_X(\lll,\mm)$), we get $q_X(\lll,\mm)=1$, the quadratic form $q_X$ is even, and all the conclusions of Theorem~\ref{th21} hold.
\end{proof}


\section{On the SYZ conjecture in dimension 4}\label{sec:SYZ}

In this section we state our two main results, Proposition~\ref{prop:CaseC1} and Proposition~\ref{prop:CaseC2}, on the SYZ conjecture for very general \hk\ fourfolds when numerically we expect that there exists a principal polarization.\
We will prove them respectively in Section~\ref{sec:TwoNefIsotropic} and Section~\ref{sec:DivisorialContraction}; we start this section by reviewing a few general results on semi-ample line bundles and Lagrangian fibrations in dimension~4.

\subsection{Results of Kawamata, Fujino, Matsushita, Fukuda, Huybrechts and Xu}\label{subsec:Lagrangian4folds}

Let~$X$ be a \hk\ manifold of dimension~$2n$.\
As we noted in Section~\ref{subsec:BBF}, given a nontrivial nef line bundle $L$ with primitive class $\lll$ such that $\int_X \lll^{2n}=0$, we have $\lll^{n}\ne0$ and $\lll^{n+1}=0$, hence the {\em numerical dimension} $\nu(X,L)$ is $n$.\

The {\em Iitaka dimension} $\kappa(X,L)$, that is, the dimension of the image of the  rational map $\phi_{L^{\otp k}}\colon X\dra \P(H^0(X,L^{\otp k})^\vee)$ for $k$ sufficiently large and divisible, satisfies $\kappa(X,L)\le \nu(X,L)$.\
If there is equality (that is, in our case, if $\kappa(X,L)=n$), the line bundle $L$ is {\em good} in the sense of \cite[Section~1]{kaw} (the current terminology is {\em abundant}; see \cite[Definition~2.2]{fuj}).\
In that case, a theorem of Kawamata (\cite[Theorem~6.1]{kaw} in the algebraic case and \cite[Theorem~4.8]{fuj} for a simpler proof also valid in the analytic case) says that $L$ is semi-ample: for $k$ sufficiently large and divisible, the sections of~$L^{\otp k}$  define a {\em morphism} $f\colon X\to B$ with connected fibers which, by \cite[Theorem~1]{mat}, is a Lagrangian fibration.

When the dimension of $X$ is 4, there are further results:
\begin{itemize}
\item assuming only $h^0(X,L^{\otp k})\ge 2$ for some $k>0$ (that is,  $\kappa(X,L)>0$), the line bundle $L$ is semi-ample (\cite[Theorem~1.5]{fuk});
\item the base $B$ of the Lagrangian fibration is isomorphic to $\P^2$ (\cite{hx}).
\end{itemize}
Assume now that $L$ is semi-ample.\ Since $\lll$ is primitive, one can   write $f^*\cO_{\P^2}(1)=L^{k_L}$ for some positive integer $k_L$.\
By~\cite[Theorem~1.3]{mathigher} and the projection formula, we have
$R^qf_*L^{k_L} \isom \Omega_{\P^2}^q(1)$ for all $q$.\
Since we know that $h^p(\P^2,\Omega_{\P^2}^q(1))=0$ except for $p=q=0$, the Leray spectral sequence gives
\begin{equation}\label{kL}
h^0(X,L^{k_L}) = h^0(\P^2, \cO_{\P^2}(1))=3\quad\textnormal{and}\quad h^i(X,L^{k_L}) =0\ \ \textnormal{for all } i>0.
\end{equation}
In particular, the map $f $ is the map $ \phi_{L^{k_L}}$.\ We will need the following elementary observation.

\begin{lemm}\label{lem:H0large}
Let $L$ be a nef line bundle on a  \hk\ fourfold $X$ with $\kappa(X,L)>0$.\
We assume that its class $\lll\in\NS(X)$ is primitive and satisfies $\int_X\lll^4=0$.\
Then,
\begin{enumerate}[{\rm (a)}]
\item either $h^0(X,L)\le1$;
\item or $k_L=1$, $h^0(X,L)=3$, and~$L$ is globally generated.
\end{enumerate}
\end{lemm}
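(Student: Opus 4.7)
The plan is as follows.\ First, since $L$ is nef with $\kappa(X,L)>0$ on a \hk\ fourfold, Fukuda's theorem recalled in Section~\ref{subsec:Lagrangian4folds} forces $L$ to be semi-ample, so the results of Matsushita and Huybrechts--Xu give a Lagrangian fibration $f\colon X\to\P^2$ with $f^*\cO_{\P^2}(1)\isom L^{k_L}$ and, by~\eqref{kL}, $h^0(X,L^{k_L})=3$.\ If $h^0(X,L)\le 1$ we are in case~(a), so the substance is the case $h^0(X,L)\ge 2$; I would pick two linearly independent sections $s_0,s_1\in H^0(X,L)$.

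The key step is a multiplication argument: I would show that the $k_L+1$ monomials $s_0^{k_L-i}s_1^i\in H^0(X,L^{k_L})$ for $i=0,\ldots,k_L$ are linearly independent.\ Indeed, a nontrivial relation $\sum a_i s_0^{k_L-i}s_1^i=0$ would read $P(s_0,s_1)=0$ for a nonzero homogeneous polynomial $P(x_0,x_1)$ of degree $k_L$; factoring $P$ over $\C$ into linear forms and using the linear independence of $s_0,s_1$, each factor is a nonzero section of $L$, and their product must be a nonzero section of $L^{k_L}$ since $X$ is integral (a finite union of proper vanishing loci cannot cover the irreducible $X$), contradicting $P(s_0,s_1)=0$.\ Therefore $k_L+1\le 3$, so $k_L\in\{1,2\}$.

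To exclude the case $k_L=2$, I would use that $L^{k_L}=L^2$ is base-point-free, so the common vanishing of the three basis elements $s_0^2,s_0s_1,s_1^2$ of $H^0(X,L^2)$ is empty, which forces $\{s_0=s_1=0\}=\varnothing$.\ The Lagrangian fibration $f=\phi_{L^2}$ then factors as
\[
X \xrightarrow{[s_0:s_1]} \P^1 \xrightarrow{\nu_2} \P^2,
\]
where $\nu_2([t_0:t_1])=[t_0^2:t_0t_1:t_1^2]$ is the quadratic Veronese.\ The image of $f$ would therefore lie on the conic $\{y_0y_2=y_1^2\}\subset\P^2$, contradicting the surjectivity of the Lagrangian fibration $f$.\ Hence $k_L=1$, which immediately gives $L\isom f^*\cO_{\P^2}(1)$, $h^0(X,L)=3$, and $L$ globally generated, placing us in case~(b).

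The argument is entirely elementary once the Fukuda--Matsushita--Huybrechts--Xu setup from Section~\ref{subsec:Lagrangian4folds} is in place; I anticipate no genuine obstacle, only the integrality check for the linear-independence claim and the Veronese-conic identification.
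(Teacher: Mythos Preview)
Your proposal is correct and follows essentially the same approach as the paper's proof: both pick two independent sections, observe that the $k_L+1$ monomials are linearly independent in the $3$-dimensional space $H^0(X,L^{k_L})$, deduce $k_L\le 2$, and rule out $k_L=2$ because $\phi_{L^2}$ would then land on a conic, contradicting surjectivity of $f$.\ The only difference is that you spell out in detail the linear-independence and Veronese-factorization arguments that the paper leaves implicit.
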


\begin{proof}
Let us assume  $k_L>1$ and $h^0(X,L)\ge2$.\ Let $\sigma,\tau\in H^0(X,L)$ be
linearly independent sections.\ The $k_L+1$ sections  $\sigma^{k_L},\sigma^{k_L-1}\tau,\dots,\tau^{k_L}$ are then linearly independent in $H^0(X,L^{\otp k_L})$.\ Since this space has dimension 3 (see~\eqref{kL}), we have $k_L=2$ and the image of the map $f=\phi_{L^{k_L}}\colon X\to \P^2$ is a conic.\ This  contradicts the surjectivity of $f$.
\end{proof}

\subsection{Cones of divisors}\label{subsec:ConeDivisors}
Let $X$ be a \hk\ manifold of dimension $2n$ with   classes~$\lll, \mm\in \NS(X)$ such that $q_X(\lll)=q_X(\mm)=0$ and $q_X(\lll,\mm)>0$.\
We assume moreover that $\NS(X)=\Z \lll\oplus \Z \mm$.

The (closed) \emph{positive cone} $\Pos(X)\subset\NS(X)\otimes\R$ is defined as the closure of the set of   classes of divisors with positive self-intersection and   positive intersection with a K\"ahler class for the form  $q_X$.\
Under our assumptions, after possibly changing signs, the positive cone is then
\[
\Pos(X) =\R_{\ge 0}\lll + \R_{\ge 0} \mm.
\]

The (closed) \emph{movable cone} $\Mov(X)\subset\NS(X)\otimes\R$ is defined as the closure of the cone generated by classes of effective divisors whose  base locus has codimension $\geq2$.\
We have an inclusion $\Mov(X)\subset\Pos(X)$.\
To determine the movable cone, we need to understand prime exceptional divisors, namely reduced and irreducible divisors with negative self-intersection for the quadratic form $q_X$ (see~\cite[Lemma~6.22]{mar:survey}).

\begin{lemm}\label{lem:PrimeExceptional}
Let $E$ be a prime exceptional divisor on $X$.\
We have $[E]=\pm(-\lll+\mm)$ in $\NS(X)$.
\end{lemm}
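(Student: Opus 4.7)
The plan is to write $[E] = x\lll + y\mm$ with $x, y \in \Z$ and pin down $(x, y)$ using two ingredients: the negativity of $q_X([E])$ (because $E$ is prime exceptional) together with the integrality of Markman's reflection $R_{[E]}$. Since $\lll$ and $\mm$ are isotropic, the hyperbolic structure of $q_X$ on $\NS(X)$ gives
\[
q_X([E]) = 2xy\, q_X(\lll, \mm) < 0,
\]
so $xy < 0$; this is the easy half.

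For the second ingredient, I would invoke the theorem of Markman \cite{mar} (already used implicitly in Section~\ref{subsec:ExceptionalDivisor}) that the reflection
\[
R_{[E]}(\alpha) = \alpha - \tfrac{2\, q_X(\alpha, [E])}{q_X([E])}\, [E]
\]
is an integral monodromy operator on $H^2(X, \Z)$, and therefore preserves the saturated sublattice $\NS(X) = \Z\lll \oplus \Z\mm$. Plugging in $\alpha = \lll$ and $\alpha = \mm$ gives
\[
R_{[E]}(\lll) = -\tfrac{y}{x}\,\mm \quad\textnormal{and}\quad R_{[E]}(\mm) = -\tfrac{x}{y}\,\lll,
\]
so $x \mid y$ and $y \mid x$, that is, $|x| = |y|$. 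Combined with $xy < 0$, this shows that $[E]$ is a positive integer multiple of $\pm(-\lll + \mm)$. To upgrade to equality, I would appeal to Markman's primitivity statement for the class of a prime exceptional divisor in $H^2(X, \Z)$; since $-\lll + \mm$ is already primitive in $\Z\lll \oplus \Z\mm$ and the latter is saturated in $H^2(X,\Z)$, this forces the scalar to be $1$.

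The main obstacle I anticipate is precisely this last primitivity step: the reflection formula is invariant under scaling $[E]$, so the integrality of $R_{[E]}$ by itself cannot distinguish $[E]$ from its integer multiples. If a clean appeal to Markman's primitivity in the form needed is not available, an alternative route is to use Druel's divisorial contraction \cite{dru} (already invoked in Section~\ref{subsec:ExceptionalDivisor}): after a birational modification, $E$ is contracted by a morphism whose general fiber is a smooth rational curve (or two meeting transversely), and the class of such a fiber in $H_2(X,\Z) \cong H^2(X,\Z)^\vee$ is prescribed (up to a factor) by the linear form $q_X([E], -)$. Numerical constraints coming from the geometry of the general fiber should then again pin the scalar down to $1$.
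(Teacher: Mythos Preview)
Your approach is close to the paper's but invokes a weaker consequence of Markman's work, which is why you are left with the scaling ambiguity. The paper cites \cite[Corollary~3.6]{mar} in its sharper form: the \emph{linear form} $\alpha\mapsto -2\,q_X([E],\alpha)/q_X([E])$ is itself an integral class in $H_2(X,\Z)=H^2(X,\Z)^\vee$ (geometrically, it is the class of a general rational-curve fiber of the contraction attached to $E$). Writing $[E]=t\lll+u\mm$ and evaluating this form on $\lll$ and $\mm$ gives $-1/t$ and $-1/u$, so $|t|=|u|=1$ immediately, and then $q_X([E])<0$ forces $t=-u$. You instead use only that the reflection $R_{[E]}$ preserves $H^2(X,\Z)$; since $R_{[E]}$ is invariant under scaling $[E]$ (as you correctly note), this yields only $|x|=|y|$.

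There is no standalone ``primitivity of prime exceptional classes'' theorem in \cite{mar} to cite for the last step; what Markman actually supplies is precisely the integral curve class above, and your alternative route through Druel's contraction and the fiber class would amount to rederiving that statement. The simplest fix is therefore to replace the reflection-integrality input by the stronger linear-form-integrality from \cite[Corollary~3.6]{mar}, which collapses your two stages into one and matches the paper's proof.
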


\begin{proof}
Let us write $[E]=t\lll+u\mm$, with $t,u\in\Z$.
By~\cite[Corollary~3.6]{mar}, the class
\[
-2\frac{q_X([E],\bullet)}{q_X([E])} = -\frac{q_X(t\lll+u\mm,\bullet)}{tuq_X(\lll,\mm)}
\]
is 
in $H_2(X,\Z)$.\
By applying it to $\lll$ and $\mm$, we get $|t|=|u|=1$.\ Since $q_X([E])<0$, we have $t=-u$, as we wanted.
\end{proof}

By Lemma~\ref{lem:PrimeExceptional}, after possibly permuting $\lll$ and $\mm$, we can assume that the ray $\R_{\ge0}\lll$ is extremal for the movable cone $\Mov(X)$.\
By~\cite[Theorem~7]{hats}, upon replacing $X$ with a birational model if necessary, which by~\cite[Theorem 4.6]{HuyHK} does not change the deformation type, we can assume that this ray is also extremal for the nef cone $\Nef(X)$.\
Therefore, one can write
\begin{align*}
\Nef(X)&=\R_{\ge 0}\lll + \R_{\ge 0} (t_{\textnormal{nef}}\lll+\mm),\\
\Mov(X)&=\R_{\ge 0}\lll + \R_{\ge 0} (t_{\textnormal{mov}}\lll+\mm),
\end{align*}
where $t_{\textnormal{nef}}\ge t_{\textnormal{mov}}\ge 0$ are rational numbers.
By~\cite[prop.~4.4]{bou}, the movable and pseudo-effective cones are mutually $q_X$-dual, hence
\[
\Psef(X)=\R_{\ge 0}\lll + \R_{\ge 0} (-t_{\textnormal{mov}}\lll+\mm).
\]


By Lemma~\ref{lem:PrimeExceptional}, we deduce  the following.

\begin{lemm}\label{lem61}
Either $t_{\textnormal{mov}}=0$, or $t_{\textnormal{mov}}=1$ and there is a prime effective divisor $E$ in $X$ with class $-\lll+\mm$.
\end{lemm}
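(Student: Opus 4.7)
The plan is to exploit the dichotomy set up by the parameter $t_{\textnormal{mov}}$: if $t_{\textnormal{mov}}=0$ the conclusion is trivial, while if $t_{\textnormal{mov}}>0$ the extremal ray of $\Psef(X)$ opposite to $\R_{\ge0}\lll$ lies strictly outside $\Mov(X)$ and so must be generated by (the class of) a prime exceptional divisor; Lemma~\ref{lem:PrimeExceptional} will then force $t_{\textnormal{mov}}=1$ and identify that divisor's class.

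Assume $t_{\textnormal{mov}}>0$ and set $\alpha\coloneqq -t_{\textnormal{mov}}\lll+\mm$. By the mutual $q_X$-duality between $\Mov(X)$ and $\Psef(X)$ recalled from \cite[prop.~4.4]{bou}, the class $\alpha$ spans an extremal ray of $\Psef(X)$. A direct computation using $q_X(\lll)=q_X(\mm)=0$ yields
\[
q_X(\alpha)=-2t_{\textnormal{mov}}\,q_X(\lll,\mm)<0,
\]
so $\alpha$ is a negative-square pseudo-effective class. In particular, the extremal ray $\R_{\ge0}\alpha$ is not contained in $\Mov(X)$.

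The key input is Boucksom's divisorial Zariski decomposition \cite{bou}, which expresses any pseudo-effective class as the sum of a movable class and a nonnegative rational combination of classes of prime exceptional divisors. Applied to a positive integral multiple of $\alpha$, and combined with the fact that $\R_{\ge0}\alpha$ is an extremal ray of $\Psef(X)$ disjoint from $\Mov(X)$, this forces the existence of a prime exceptional divisor $E$ with $[E]=r\alpha$ for some $r\in\Q_{>0}$. Now Lemma~\ref{lem:PrimeExceptional} forces $[E]=\pm(-\lll+\mm)$; since both $[E]$ and $\alpha$ have positive $\mm$-coefficient, we must have $[E]=-\lll+\mm$, whence $r=1$ and $t_{\textnormal{mov}}=1$, as claimed.

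The only nontrivial step is the appeal to the divisorial Zariski decomposition to guarantee that an extremal ray of $\Psef(X)$ lying outside $\Mov(X)$ is generated by the class of a single prime exceptional divisor rather than merely a positive combination of several. In the rank-$2$ setting considered here this is essentially immediate from extremality, but it is the only place where we need to invoke more than the duality already used in defining $t_{\textnormal{mov}}$.
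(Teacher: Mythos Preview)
Your proof is correct and follows essentially the same route as the paper, which simply records the lemma as an immediate consequence of Lemma~\ref{lem:PrimeExceptional} together with the description of $\Mov(X)$ via prime exceptional divisors already invoked from \cite[Lemma~6.22]{mar:survey}. The only (minor) difference is that you justify the existence of the prime exceptional divisor on the negative extremal ray of $\Psef(X)$ via Boucksom's divisorial Zariski decomposition, whereas the paper implicitly relies on Markman's Weyl-chamber description of the movable cone; both inputs are cited in the setup of Section~\ref{subsec:ConeDivisors} and lead to the same conclusion.
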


Note that, if $t_{\textnormal{mov}}=1$, the nef cone coincides with the movable cone if and only if the class $\lll+\mm$ is nef; this class is then semi-ample by Kawamata's Base-Point-Free Theorem and some multiple of it defines a divisorial contraction $c\colon X\to Y$ with exceptional divisor $E$.

\subsection{The two key propositions}\label{subsec:HKNefCone}

In the rest of this article, we will be mostly concerned with triples $(X,\lll,\mm)$ satisfying the following set of properties:
\begin{equation}\label{hypo}
    \left\{
    \begin{array}{l}
    X \hbox{ is a  \hk\ fourfold;}\\
   \lll, \mm\in \NS(X);\\
   \int_X \lll^{4}=0 \hbox { and }\int_X \lll^{2}\mm^{2}=2.
    \end{array}
    \right.
\end{equation}
We will always denote by $L$ and $M$ the line bundles on $X$ representing $\lll$ and $\mm$.

By Theorem~\ref{th43a}, properties~\eqref{hypo} imply the following other set of properties (after possibly changing $\mm$ into $-\mm$ and adding to it a multiple of $\lll$):
\begin{equation}\tag{\theequation$^\prime$}\label{hypoprime}
\begin{small}    \left\{
    \begin{array}{l}
   q_X(\lll)=q_X(\mm)=0\hbox{ and } q_X(\lll,\mm)=1; \\
   \hbox{the Chern and Hodge numbers of $X$ are those of the Hilbert square of a K3 surface;}\\
    \hbox{the canonical map $\Sym^2\!H^2(X,\Q)\isomto H^4(X,\Q)$ is an isomorphism;}\\
    P_{RR,X}(T)= \binom{ \frac{T}{2}+3}{2};\\
    \hbox{the quadratic form $q_X$ is even};\\
c_X =3, \hbox{ so that $\int_X\alpha^4=3q_X(\alpha)^2$ for all } \alpha\in H^2(X,\Z).
    \end{array}
    \right.
\end{small}
\end{equation}
So when we assume~\eqref{hypo}, we will always assume that~\eqref{hypoprime} also holds.

Following \cite[Section~3]{og1}, we define the dual $q_X^\vee \in H^4(X,\Q)$ of the quadratic form $q_X$.\
Since $b_2(X)=23$, it satisfies (\cite[Proposition~2.2]{og1})
\begin{equation}\label{qxc}
\int_X q_X^\vee\cdot q_X^\vee=575\quad,\qquad \forall \alpha,\beta\in H^2(X,\Z)\quad \int_X q_X^\vee\cdot\alpha\cdot\beta=25 q_X(\alpha,\beta).
\end{equation}
Finally, we have (\cite[(3.0.45)]{og1})
\begin{equation}\label{c2}
c_2(X)=\frac 65 q_X^\vee.
\end{equation}

For any  cohomology class
$\eta\in H^4(X,\Q)$, we define a symmetric intersection matrix
\begin{eqnarray}\label{eqmatriceO}
M_\eta\coloneqq
\begin{pmatrix}
\int_X\eta\lll^2& \int_X\eta\lll\mm \\
\int_X\eta\lll\mm & \int_X\eta\mm^2
\end{pmatrix}
\end{eqnarray}
with rational coefficients (which are integers if $\eta$ is integral).\
We have for example
\begin{eqnarray}\label{eqdematl2m2}
M_{\lll^2}=\begin{pmatrix}
0 & 0 \\
0 & 2
\end{pmatrix},\quad M_{\lll\mm}=\begin{pmatrix}
0 & 2 \\
2 & 0
\end{pmatrix}, \quad M_{\mm^2}=\begin{pmatrix}
2 & 0 \\
0 & 0
\end{pmatrix}, \quad M_{q_X^\vee}=\begin{pmatrix}
0 & 25 \\
25 & 0
\end{pmatrix}.
\end{eqnarray}

The manifolds $X$ satisfying~\eqref{hypo} are all projective and, by the surjectivity of the period map, they form an irreducible 18-dimensional family.\
For very general members of this family, we show that the groups of Hodge classes are very simple.

\begin{prop}\label{prop:SurfacesOnHK4}
A very general triple $(X,\lll,\mm)$ satisfying~\eqref{hypo} has the following properties:
\begin{enumerate}[{\rm (a)}]
\item\label{aaa} $\NS(X)=\Z\lll\op \Z\mm$;
\item\label{bbb} the group of degree-2 rational Hodge classes is
$$\Hdg^2(X,\Q)={\Sym}^2{\NS}(X)_{\Q}\op\Q q_X^\vee=\Q\lll^2\oplus \Q\lll\mm\op\Q\mm^2\op\Q q_X^\vee.$$
\end{enumerate}
\end{prop}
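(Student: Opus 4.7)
The plan is to handle~\eqref{aaa} by a standard Noether--Lefschetz-type density argument and then, using the canonical isomorphism $\Sym^2 H^2(X,\Q)\isomto H^4(X,\Q)$ supplied by~\eqref{hypoprime}, to enumerate all Hodge classes in $H^4(X,\Q)$ via a Mumford--Tate/Zarhin input on the transcendental part $T_X\coloneqq\NS(X)_\Q^\perp\subset H^2(X,\Q)$.

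For~\eqref{aaa}, I would first note that Remark~\ref{prim} (with $n=2$, $a=1$) already ensures that $\Z\lll\oplus\Z\mm$ is saturated in $H^2(X,\Z)$, so it suffices to rule out any further $(1,1)$-class at a very general point.\ By the surjectivity of the period map, the $18$-dimensional irreducible family of triples satisfying~\eqref{hypo} dominates an open subset of the sub-period domain of $\mathrm{K3}^{[2]}$-type Hodge structures in which $\lll$ and $\mm$ are of type $(1,1)$.\ For any primitive class $\alpha\in H^2(X,\Z)\setminus(\Z\lll\oplus\Z\mm)$, the locus on which $\alpha$ becomes $(1,1)$ is cut out by the hyperplane $\alpha^\perp$ and is a proper closed analytic subvariety of this period subdomain.\ Taking the union over the countably many such $\alpha$ yields a countable union of proper closed subsets, whose complement is the desired very general locus.

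For~\eqref{bbb}, I would use the $q_X$-orthogonal decomposition $H^2(X,\Q)=\NS(X)_\Q\oplus T_X$, valid at the very general member by~\eqref{aaa}, to obtain an orthogonal decomposition of rational Hodge structures of weight $4$
\[
\Sym^2 H^2(X,\Q)=\Sym^2\NS(X)_\Q\;\oplus\;\bigl(\NS(X)_\Q\otimes T_X\bigr)\;\oplus\;\Sym^2 T_X,
\]
and enumerate $(2,2)$-classes in each summand.\ The first summand carries the trivial pure-$(2,2)$ Hodge structure and contributes $\Q\lll^2\oplus\Q\lll\mm\oplus\Q\mm^2$.\ A Hodge class in the middle summand corresponds to a Hodge morphism $\NS(X)_\Q\to T_X$ whose image is a rational sub-Hodge structure of $T_X$ of pure type $(1,1)$, hence contained in $T_X\cap\NS(X)_\Q=0$; this summand contributes nothing.

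The core of the argument is therefore $\Sym^2 T_X$.\ Here I would invoke Zarhin's theorem: at a very general point of our irreducible family the Mumford--Tate group of $T_X$ is the full $\SO(T_X,q_X|_{T_X})$, so $T_X$ is a simple rational Hodge structure with $\End_{HS}(T_X)=\Q$.\ The $q_X$-identification of $T_X$ with its dual (up to a Tate twist) converts $\Sym^2 T_X$ into the space of $q_X$-self-adjoint endomorphisms of $T_X$ and its Hodge classes into self-adjoint Hodge endomorphisms; by Schur's lemma this space is one-dimensional, spanned by $\id_{T_X}$, which corresponds precisely to $q_X^\vee|_{T_X}$.\ Summing the three contributions and transporting across $\Sym^2 H^2(X,\Q)\isomto H^4(X,\Q)$ yields the stated description of $\Hdg^2(X,\Q)$.\ The most delicate point is the Zarhin input: one must verify that the family is rich enough for the generic Mumford--Tate group on $T_X$ to be attained at a very general point, which reduces via the surjectivity of the period map to the irreducibility of the Noether--Lefschetz subdomain cut out by the conditions that $\lll$ and $\mm$ be of type $(1,1)$.
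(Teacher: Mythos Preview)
Your proposal is correct and follows essentially the same route as the paper: both use the isomorphism $\Sym^2 H^2(X,\Q)\isomto H^4(X,\Q)$ from~\eqref{hypoprime}, split into the three summands $\Sym^2\NS$, $\NS\otimes T_X$, $\Sym^2 T_X$, and then invoke that at a very general point the Mumford--Tate group of $T_X$ is the full (special) orthogonal group, forcing the Hodge classes in $\Sym^2 T_X$ to be multiples of the invariant quadratic form. Your write-up is more explicit (you spell out the Noether--Lefschetz argument for~\eqref{aaa}, dispose of the middle summand, and phrase the last step via self-adjoint Hodge endomorphisms and Schur), whereas the paper simply calls~\eqref{aaa} ``classical'' and, for~\eqref{bbb}, states the Mumford--Tate fact and reads off the invariants directly; but the underlying argument is the same. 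One cosmetic remark: the label ``Zarhin's theorem'' is a slight overreach here---what you actually need is the elementary fact that a very general point of the period subdomain has maximal Mumford--Tate group (Zarhin's result describes $\End_{HS}(T_X)$ for \emph{all} K3-type Hodge structures, which is stronger than required).
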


\begin{proof}
 Item~\ref{aaa} is classical.\ As for   item~\ref{bbb}, the isomorphism $\Sym^2H^2(X,\Q)\isom H^4(X,\Q)$ from~\eqref{hypoprime} induces a decomposition
\begin{eqnarray}\label{eqdecompH4}
H^4(X,\Q)={\Sym}^2H^2(X,\Q)_{\rm tr}\oplus (H^2(X,\Q)_{\rm tr}\otimes \NS(X)_{\Q})\oplus {\Sym}^2{\NS}(X)_{\Q},
\end{eqnarray}
where $H^2(X,\Q)_{\rm tr}\coloneqq \NS(X)_{\Q}^\perp$.\ Since  $X$ is very general, the Mumford--Tate group of the Hodge structure on $H^2(X,\Q)_{\rm tr}$ is the orthogonal group of the  form $q_X$, so that the only Hodge classes in ${\Sym}^2H^2(X,\Q)_{\rm tr}\subset H^4(X,\Q)$ are multiples of the class $q_X^\vee$.\ It follows from the decomposition~\eqref{eqdecompH4} that the space of rational degree-2 Hodge classes on $X$ is generated by ${\Sym}^2{\NS}(X)_{\Q}$ and $q_X^\vee$.
\end{proof}

We described in   Section~\ref{subsec:ConeDivisors} the general structure of the cones of divisors.\ In our case, we use an idea similar to the one used in the proof of   Proposition~\ref{prop:SurfacesOnHK4} to make this description more precise.

\begin{prop}\label{lem:NefCone4folds}
Let $(X,\lll,\mm)$ be a triple satisfying~\eqref{hypo} and such that~$\NS(X)=\Z\lll\oplus\Z\mm$.\
The nef and movable cones of $X$ coincide.\
Equivalently, $X$ has no nontrivial \hk\ birational models.
\end{prop}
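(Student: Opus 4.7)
The plan is to argue by contradiction. By Section~\ref{subsec:ConeDivisors}, the inclusion $\Nef(X) \subseteq \Mov(X)$ fails to be an equality precisely when $X$ admits a small (flopping) extremal contraction. By the general theory of symplectic contractions on hyper-K\"ahler fourfolds (Wierzba), any such contraction contracts a disjoint union of Lagrangian planes $\P^2 \subset X$, so it suffices to show that $X$ contains no Lagrangian plane.

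Assume $\P \simeq \P^2 \subset X$ is Lagrangian. Since $N_{\P/X} \simeq \Omega^1_{\P^2}$, one has $\int_X [\P]^2 = c_2(\Omega^1_{\P^2}) = 3$, while for any $\alpha, \beta \in H^2(X, \Q)$,
\[
\int_X [\P] \cdot \alpha \cdot \beta = (\alpha|_\P) \cdot (\beta|_\P) = d_\alpha d_\beta, \qquad d_\alpha \coloneqq \deg(\alpha|_\P) \in \Z,
\]
a rank-one symmetric bilinear form in $(\alpha,\beta)$. The key step is to argue that $[\P] \in \Sym^2 \NS(X)_\Q \oplus \Q \cdot q_X^\vee$. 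Under the very general hypothesis of Proposition~\ref{prop:SurfacesOnHK4} this is immediate; in general it follows by a Hodge-theoretic deformation argument in the $18$-dimensional family of HK fourfolds with $\lll, \mm \in H^{1,1}$, using Voisin's theorem on deformations of Lagrangian submanifolds to transport $\P$ to a very general member, where Proposition~\ref{prop:SurfacesOnHK4} applies.

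Writing $[\P] = r\lll^2 + s\lll\mm + t\mm^2 + u\, q_X^\vee$ and expanding $\int_X [\P] \cdot \alpha \cdot \beta$ via the polarized Fujiki relation~\eqref{eq:Fujiki3} together with $\int_X q_X^\vee \cdot \alpha \cdot \beta = 25\, q_X(\alpha,\beta)$ from~\eqref{qxc}, the rank-one condition on $(\alpha,\beta) \mapsto d_\alpha d_\beta$ forces $s + 25u = 0$ and $4rt = s^2$, yielding $(r,s,t,u) = (p^2/2,\, pq,\, q^2/2,\, -pq/25)$ with $p = d_\mm$ and $q = d_\lll$ in $\Z$. Substituting into $\int_X [\P]^2$ using~\eqref{eqdematl2m2} and $\int_X (q_X^\vee)^2 = 575$ from~\eqref{qxc} gives
\[
\int_X [\P]^2 = 4rt + 2s^2 + 50\, us + 575\, u^2 = \tfrac{48}{25}\, p^2 q^2.
\]
Setting this equal to $3$ forces $p^2 q^2 = 25/16$, which has no integer solution; a contradiction. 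Hence $X$ admits no Lagrangian plane, and $\Nef(X) = \Mov(X)$.

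The delicate part is the reduction to the generic setting of Proposition~\ref{prop:SurfacesOnHK4}: the condition $\NS(X) = \Z\lll \oplus \Z\mm$ is Hodge-generic only at the level of divisors, while the Mumford--Tate group of the transcendental Hodge structure on $\NS(X)^\perp$ could a priori be smaller than the full orthogonal group, allowing algebraic classes in $H^{2,2}(X,\Q)$ outside $\Sym^2 \NS(X)_\Q \oplus \Q q_X^\vee$; closing this gap via the Lagrangian-deformation step is the main obstacle in the plan.
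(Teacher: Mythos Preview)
Your strategy matches the paper's: rule out a Lagrangian plane $\P^2\subset X$ (via \cite{wiwi}) by a numerical contradiction on $[\P^2]$. The paper works with the class $A\in\NS(X)$ that is $q_X$-dual to the line $\ell\subset\P^2$ (so $q_X(A,\alpha)=\alpha\cdot\ell=d_\alpha$ for all $\alpha\in H^2(X,\Q)$; integrality of $A$ uses that $\NS(X)$ is a hyperbolic plane), writes $[\P^2]=tA^2+uq_X^\vee$ after deforming to a very general $X$ with $\ell$ still Hodge, and combines the three equations $[\P^2]^2=3$, $c_2(X)\cdot[\P^2]=-3$, and $A^2\cdot[\P^2]=q_X(A)^2$ into $92\,q_X(A)^2+20\,q_X(A)-525=0$, which has no integral root. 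Your endgame via the rank-one constraint and $[\P^2]^2=3$ alone is a pleasant shortcut and avoids the $c_2$-equation entirely.

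The ``main obstacle'' you flag is in fact not an obstacle, and your own rank-one observation dissolves it without any deformation or Mumford--Tate input. Since $\Sym^2 H^2(X,\Q)\isomto H^4(X,\Q)$ and the Poincar\'e pairing on $H^4$ is nondegenerate, the class $[\P^2]$ is uniquely determined by the numbers $\int_X[\P^2]\cdot\alpha\beta=q_X(A,\alpha)\,q_X(A,\beta)$ for $\alpha,\beta\in H^2(X,\Q)$. A direct check using \eqref{eq:Fujiki3} and \eqref{qxc} shows that $\tfrac12 A^2-\tfrac{q_X(A)}{50}\,q_X^\vee$ has exactly these pairings, hence equals $[\P^2]$. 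In your notation this already gives $(r,s,t,u)$ outright (with $A=p\lll+q\mm$), so the decomposition $[\P^2]\in\Sym^2\NS(X)_\Q\oplus\Q q_X^\vee$ holds on the nose for the given $X$. In the paper's formulation, this same identity is what makes the deformation step legitimate: once $[\P^2]\in\Q A^2+\Q q_X^\vee$, the class stays of type $(2,2)$ wherever $\ell$ does, so Voisin's theorem transports the plane.
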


\begin{proof}
We follow the proof of \cite[Theorem~22]{hats}, whose argument we briefly review.\
We assume for a contradiction that the nef and movable cones of $X$ are different.\
By \cite[Theorem 1.1]{wiwi}, there exists a Mukai flop on $X$, hence a Lagrangian plane $\P^2\subset X$.\
Let $\ell\in H_2(X,\Z)$ be the class of a line in $ \PP^2$.

Since the lattice $(\NS(X),q_X)$ is unimodular (by~\eqref{hypoprime}, it is a hyperbolic plane), there exists \mbox{$A\in\NS(X)$} such that
\begin{equation}\label{eq:DualLine}
\forall B\in  \NS(X)\qquad q_X(A,B) =  B\cdot \ell.
\end{equation}
Since the class $\ell$ is of type $(1,1)$, it is orthogonal to $H^{2,0}(X)$, hence to the transcendental lattice  as defined in \cite[Definition~3.2.5]{huyk3}.\
But the transcendental lattice is also the orthogonal complement of $\NS(X)$, hence~\eqref{eq:DualLine} remains valid for all $B$ in $\NS(X)^\perp_\Q$, hence for all $B$ in~$H^2(X,\Q)$.

As explained in~\cite[(8)]{hats} and~\cite[Section~3]{og1}, upon replacing $X$ with a very general deformation for which the class $\ell$ remains a Hodge class (hence for which the plane $\P^2$ deforms along; see~\cite{voi}), we can write
\[
[\P^2] = t A^2 + u q_X^\vee \in H^4(X,\Q),
\]
for some $t,u\in\Q$.\
We now compute the numbers
\begin{equation*}
[\P^2]\cdot[\P^2],\qquad c_2(X)\cdot [\P^2],\qquad A^2\cdot [\P^2]
\end{equation*}
in order to evaluate $t$, $u$, and $q_X(A)$.\

Since $\P^2\subset X$ is Lagrangian, we have $N_{\P^2/X}\cong\Omega^1_{\P^2}$, and so (here we use $c_X=3$ from~\eqref{hypoprime}, and~\eqref{qxc})
\begin{equation}\label{eq:NefCone4folds1}
3 = c_2(N_{\P^2/X}) = [\P^2]\cdot [\P^2] = 3 t^2 q_X(A)^2 + 50 t u q_X(A) + 575 u^2.
\end{equation}
From the normal bundle exact sequence for $\P^2\subset X$, we get (here we use~\eqref{c2} and~\eqref{qxc})
\begin{equation}\label{eq:NefCone4folds2}
-3 = c_2(X)\cdot [\P^2] = \frac 65 \left( 25 t q_X(A) + 575 u \right)= 30 \left(t q_X(A) +  25 u \right).
\end{equation}
Finally, since $A\vert_{\P^2} = (A\cdot \ell)\ell$, we get from~\eqref{eq:DualLine} the relations
\begin{equation}\label{eq:NefCone4folds3}
q_X(A)^2  =  (A\cdot \ell)^2=  (A\vert_{\P^2})^2 = A^2\cdot [\P^2] =  3 t q_X(A)^2 + 25 u q_X(A).
\end{equation}

Solving~\eqref{eq:NefCone4folds1}, \eqref{eq:NefCone4folds2}, and \eqref{eq:NefCone4folds3} for $t$ and $u$, we get the equation
\[
92 q_X(A)^2 + 20  q_X(A) - 525 = 0,
\]
which has no integral solutions.\
This a contradiction, which proves the proposition.
\end{proof}

Under the assumptions of Proposition~\ref{lem:NefCone4folds},  permuting $\lll$ and $\mm$ if necessary, we can assume that~$\lll$ is nef  and write, as in Lemma~\ref{lem61} (by Proposition~\ref{lem:NefCone4folds}, we do not need to replace~$X$ by a birational model  as in Section~\ref{subsec:ConeDivisors})
\begin{align}
\Pos(X) &=\R_{\ge 0}\lll + \R_{\ge 0} \mm,\nonumber\\
\Mov(X)=\Nef(X)&=\R_{\ge 0}\lll + \R_{\ge 0} (t_0\lll+\mm),\label{cones}\\
\Psef(X)&=\R_{\ge 0}\lll + \R_{\ge 0} (-t_0\lll+\mm),\nonumber
\end{align}
where $t_0\in\{0,1\}$.\
In sum, there are two cases for a  triple $(X,\lll,\mm)$ satisfying~\eqref{hypo} and such that~$\NS(X)=\Z\lll\oplus\Z\mm$:
\begin{enumerate}[label={\rm(C\arabic*)},ref=(C\arabic*)]
\item\label{caseC1} either $t_0=0$, the class $\mm$ is nef and all cones of divisors are equal;
\item\label{caseC2} or $t_0=1$ and Lemma~\ref{lem61} says that there exists a unique prime divisor~$E$ whose class is not in the positive cone.\ We have $E\in |L^{-1}\otimes M|$ and the sections of \mbox{$(L\otimes M)^{\otp k}$} define, for $k\gg 0$, the divisorial contraction $c\colon X\to Y$ of $E$.\ The discussion of Section~\ref{subsec:ExceptionalDivisor} still applies: a general fiber of $E\to c(E)$ is a smooth rational curve and the reflection about the hyperplane $(-\lll+\mm)^\bot$ is a monodromy operator that permutes~$\lll$ and~$\mm$.
\end{enumerate}

In short, the class $\mm$ is nef in case~\ref{caseC1} and not nef in case~\ref{caseC2}.\
In the former case, we have the following result.

\begin{prop}\label{prop:CaseC1}
Let $(X,\lll,\mm)$ be a very general triple satisfying~\eqref{hypo}
and assume in addition that we are in case~{\rm\ref{caseC1}}.\
 Then one of the following  statements holds:
\begin{enumerate}[{\rm (a)}]
       \item  $H^0(X,L)\ne 0$, $H^0(X,M)\ne 0$, and  there is a Lagrangian fibration \mbox{$f\colon X\to\P^2$} with $f^*{\cO_{\P^2}}(1)\isom L$ or $M$;\label{prop64a}  
    \item  at least one of $H^0(X,L)$ or $H^0(X,M)$ is trivial and  the image of the rational map $\phi_{L\otimes M}\colon X\dra \P^5$ is rationally connected.\label{prop64b}
\end{enumerate}
\end{prop}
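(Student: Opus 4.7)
My plan is to use ampleness of $L\otimes M$ together with Kodaira vanishing to compute $h^0(X,L\otimes M)$, to invoke Lemma~\ref{lem:H0large} to detect when $L$ or $M$ itself induces a Lagrangian fibration, and then to handle the remaining configurations by analyzing the restriction of $L$ to a prime divisor in $|M|$ (to rule out $h^0(L)=h^0(M)=1$) together with a birational analysis of the image of $\phi_{L\otimes M}$.

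In case~\ref{caseC1}, the nef and movable cones coincide and equal $\R_{\ge 0}\lll+\R_{\ge 0}\mm$, so $\lll+\mm$ lies in the interior of the nef cone.\ By the Fujiki relation in~\eqref{hypoprime}, $(L\otimes M)^4=3\,q_X(\lll+\mm)^2=12>0$, so $L\otimes M$ is ample, and Kodaira vanishing combined with Theorem~\ref{th43a} yields
$$h^0(X,L\otimes M)=\chi(X,L\otimes M)=P_{RR,X}(2)=\binom{4}{2}=6,$$
defining a rational map $\phi_{L\otimes M}\colon X\dashrightarrow\P^5$.\ Applying Lemma~\ref{lem:H0large} to each of the nef line bundles $L$ and $M$ (both with primitive $q_X$-isotropic class), one obtains the dichotomy: either $h^0\le 1$, or $h^0=3$ with the bundle globally generated and inducing a Lagrangian fibration $X\to\P^2$.\ If $h^0(X,L)=3$ and $h^0(X,M)\ne 0$ (or symmetrically), conclusion~(a) holds at once.

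The delicate subcase to rule out is $h^0(X,L)=h^0(X,M)=1$.\ Here the unique effective divisors $D_L\in|L|$ and $D_M\in|M|$ are prime and reduced: any effective summand has class in $\R_{\ge 0}\lll+\R_{\ge 0}\mm$ (the pseudo-effective cone), and primitivity of $\lll,\mm$ forbids any further decomposition.\ From the restriction sequence
$$0\to L\to L\otimes M\to (L\otimes M)|_{D_M}\to 0,$$
together with $h^0(L)=1$ and $h^0(L\otimes M)=6$, one obtains at least five sections of $(L\otimes M)|_{D_M}\cong L|_{D_M}\otimes\omega_{D_M}$ on the 3-fold $D_M$ (using $K_X=\cO_X$ and adjunction to identify $M|_{D_M}\cong\omega_{D_M}$).\ Since $L|_{D_M}$ is nef with $(L|_{D_M})^3=\int_X\lll^3\mm=0$ by the polarized Fujiki relation, an Iitaka fibration analysis on a resolution of $D_M$ should yield a nontrivial fibration structure propagating back to $X$ and contradicting $h^0(X,L^{\otimes k})\le 1$ for all $k$.

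Finally, in case~(b), one must show that the image $Y$ of $\phi_{L\otimes M}$ is rationally connected.\ The degree relation $\deg\phi_{L\otimes M}\cdot\deg Y=(L\otimes M)^4=12$ in $\P^5$, combined with the triviality of $K_X$ and the existence of a symplectic form on $X$ (which prevents pullback of nontrivial pluricanonical forms from $Y$), should confine $Y$ to be of sufficiently low degree in $\P^5$ (hence Fano and rationally connected) or to fiber over a rational base.\ The \textbf{main obstacle} is the rigorous elimination of the $(h^0(L)=h^0(M)=1)$ configuration, where control over the singularities of $D_M$ and the Iitaka fibration of $L|_{D_M}$ is essential, and the verification of rational connectedness of $Y$ in case~(b); both demand combining the Hodge-theoretic constraints from~\eqref{hypoprime} with the geometry of prime divisors on hyper-K\"ahler fourfolds.
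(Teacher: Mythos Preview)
Your setup is correct: $L\otimes M$ is ample, $h^0(X,L\otimes M)=6$, and Lemma~\ref{lem:H0large} reduces the problem to (i) ruling out $h^0(X,L)=h^0(X,M)=1$ and (ii) establishing rational connectedness of the image when one of the spaces vanishes. But both of your proposed arguments for these two steps have genuine gaps, and the paper's proofs proceed along entirely different lines.

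For the elimination of the configuration $h^0(X,L)=h^0(X,M)=1$: your plan is to restrict $L$ to the prime divisor $D_M$ and run an Iitaka-fibration argument on a resolution. This is not developed, and it is unclear how the nef, numerically trivial in top degree line bundle $L\vert_{D_M}$ on a possibly singular threefold would force $h^0(X,L^{\otimes k})\ge 2$ for some $k$; abundance on threefolds with the right singularity hypotheses is delicate, and nothing here controls $\Sing(D_M)$. The paper avoids this entirely: it works instead with the \emph{surface} $\Sigma_{\sigma\tau}=D_L\cap D_M$ (Lemma~\ref{lecasholhoM}). Via the Koszul resolution of $\cI_{\Sigma_{\sigma\tau}}$, one computes $h^0(\Sigma_{\sigma\tau},(L^{\otimes 2}\otimes M^{\otimes 2})\vert_{\Sigma_{\sigma\tau}})=7$ and that the restriction of $|L\otimes M|$ to $\Sigma_{\sigma\tau}$ has rank $5$. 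After proving (using the explicit description of $\Hdg^2(X,\Q)$ from Proposition~\ref{prop:SurfacesOnHK4}) that $\Sigma_{\sigma\tau}$ is irreducible and maps to a nondegenerate surface in $\P^4$, one reaches a contradiction with Castelnuovo's bound on the number of quadrics containing a nondegenerate surface.

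For rational connectedness of $Y=\phi_{L\otimes M}(X)$: your degree relation $\deg\phi_{L\otimes M}\cdot\deg Y=12$ presupposes that $\phi_{L\otimes M}$ is a morphism and generically finite. Neither is known; $|L\otimes M|$ may well have base points, and $\dim Y$ could be strictly less than $4$. Moreover, even granting the relation, ``low degree in $\P^5$'' does not by itself yield Fano or rationally connected (a hypersurface of degree $6$ has trivial canonical bundle). The paper's argument is substantially more intricate: when $\dim Y<4$ one invokes Lin's theorem~\cite{linthese}; when $\dim Y=4$, one analyzes the multiplication map $\mu\colon H^0(L\otimes M)\otimes H^0(L^{\otimes 2}\otimes M)\to H^0(L^{\otimes 3}\otimes M^{\otimes 2})$, proves via a Grassmannian Chern-class computation (Lemma~\ref{leW6W10}) that $\mu$ restricted to a general $3$-plane has rank $\ge 28$, and then uses repeated applications of the Hopf lemma (Lemma~\ref{lepourconcprop72}) to force a general plane section of $Y$ to be rational or a smooth plane cubic.

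In short, you have correctly identified the architecture of the proof, but the two load-bearing steps require the specific techniques of Sections~\ref{nonz} and~\ref{zero}; the substitutes you sketch do not go through as stated.
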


We will prove Proposition~\ref{prop:CaseC1} in Section~\ref{proof}.\
As a consequence of the results in Section~\ref{subsec:ExceptionalDivisor} and~\cite{voisinfib}, we obtain that case~\ref{caseC1} actually does not occur.

\begin{coro}\label{cor:NoTwoIsotropicNef}
Let $(X,\lll,\mm)$ be a very general triple satisfying~\eqref{hypo}.\
Case~{\rm\ref{caseC1}} does not occur.
\end{coro}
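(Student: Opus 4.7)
The plan is to combine the dichotomy of Proposition~\ref{prop:CaseC1} with two additional ingredients: the structural consequences of the existence of a Lagrangian fibration established in Section~\ref{subsec:ExceptionalDivisor}, and the classification result \cite[Theorem~4.2]{voisinfib}. The corollary itself is then a short synthesis of these.

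Assume for contradiction that we are in case~\ref{caseC1}, so both $\lll$ and $\mm$ are nef. Proposition~\ref{prop:CaseC1} presents two alternatives. In the first, there is a Lagrangian fibration $f\colon X\to\P^2$ with $f^*\cO_{\P^2}(1)\cong L$ (the subcase $f^*\cO_{\P^2}(1)\cong M$ is symmetric, since our hypotheses~\eqref{hypo} and~\eqref{hypoprime} are symmetric in $\lll$ and $\mm$). Because $\int_X\lll^2\mm^2=2=2!$ and $q_X(\lll,\mm)=1>0$, the setup of Section~\ref{subsec:ExceptionalDivisor} applies and produces a prime effective divisor $E\in|L^{-1}\otimes M|$ with $q_X([E])=-2$ and $q_X([E],\mm)=-1$. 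In particular $M$ is not nef, contradicting case~\ref{caseC1}.

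In the second alternative, one of $H^0(X,L)$ or $H^0(X,M)$ is trivial and the image of $\phi_{L\otimes M}\colon X\dra\P^5$ is rationally connected. For a very general triple, Proposition~\ref{prop:SurfacesOnHK4} gives $\NS(X)=\Z\lll\oplus\Z\mm$, so $X$ has Picard number two, with numerical invariants recorded in~\eqref{hypoprime}. This is exactly the situation treated in \cite[Theorem~4.2]{voisinfib}, whose conclusion excludes such a rationally connected image. Both alternatives are thereby ruled out, so case~\ref{caseC1} cannot occur.

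The real work lies upstream: Proposition~\ref{prop:CaseC1} is the substantive statement (to be established in Section~\ref{sec:TwoNefIsotropic}), and its main difficulty is to control the geometry of the linear system $|L\otimes M|$ when neither $|L|$ nor $|M|$ already gives a Lagrangian fibration. The other external input, \cite[Theorem~4.2]{voisinfib}, extends O'Grady's analysis of the possible members of such linear systems. Given both results, the corollary is a short bookkeeping argument built around the observation that the Lagrangian-fibration geometry of Section~\ref{subsec:ExceptionalDivisor} is incompatible with the simultaneous nefness of $\lll$ and $\mm$ assumed in case~\ref{caseC1}.
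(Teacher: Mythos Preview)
Your proof is correct and follows essentially the same approach as the paper: apply the dichotomy of Proposition~\ref{prop:CaseC1}, rule out alternative~\ref{prop64b} via \cite[Theorem~4.2]{voisinfib}, and rule out alternative~\ref{prop64a} via the exceptional divisor construction of Section~\ref{subsec:ExceptionalDivisor}, which forces $M$ (or $L$) to be non-nef. The only cosmetic difference is the order in which you treat the two alternatives and the added commentary about where the substantive work lies.
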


\begin{proof}
By~\cite[Theorem 4.2]{voisinfib}, in case~\ref{caseC1}, if $H^0(X,L)$ or $H^0(X,M)$ is $0$, the image of the rational map $\phi_{L\otimes M}\colon X\dra \P^5$ cannot be rationally connected.\ 
Hence we are in case~\ref{prop64a} of  Proposition~\ref{prop:CaseC1}: there exists a Lagrangian fibration \mbox{$f\colon X\to\P^2$} with $f^*{\cO_{\P^2}}(1)\isom L$ or~$M$.\
The discussion of Section~\ref{subsec:ExceptionalDivisor} then applies, showing that $M$ and $L$ cannot be both nef,    contradicting~\ref{caseC1}.
\end{proof}

In case~\ref{caseC2}, we have the following result.

\begin{prop}\label{prop:CaseC2}
Let $(X,\lll,\mm)$ be a very general triple
satisfying~\eqref{hypo} and assume in addition that we are in case~{\rm\ref{caseC2}}.\
 There exist a positive integer~$k_L$ and a Lagrangian fibration \mbox{$f\colon X\to\P^2$} such that $f^*{\cO_{\P^2}}(1)\isom L^{\otp {k_L}}$.
\end{prop}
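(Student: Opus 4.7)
The strategy is to establish the single inequality $h^0(X,L^{\otimes 2})\geq 2$. Once this is in hand, Fukuda's theorem recalled in Section~\ref{subsec:Lagrangian4folds} forces $L$ to be semi-ample, and then Kawamata abundance, Matsushita's theorem, and the Huybrechts--Xu theorem produce a Lagrangian fibration $f\colon X\to\P^2$ with $f^*\cO_{\P^2}(1)\cong L^{\otimes k_L}$ for some positive integer $k_L$, which is exactly the conclusion of the proposition.

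\textbf{Pushing down to $Y$.} I would work with the divisorial contraction $c\colon X\to Y$ from case~\ref{caseC2} and put $T\coloneqq c(E)$. Every curve contracted by $c$ lies in $E$ with homology class $\ell_E$ satisfying $(L\otimes M)\cdot\ell_E=q_X(\mm-\lll,\lll+\mm)=0$, so there is an ample line bundle $\bar H$ on $Y$ with $c^*\bar H\cong L\otimes M$. Using $\cO_X(E)\cong L^{-1}\otimes M$, one rewrites
\[
L^{\otimes 2}\cong c^*\bar H\otimes\cO_X(-E).
\]
Grauert--Riemenschneider vanishing ($R^ic_*\cO_X=0$ for $i>0$, as $\omega_X\cong\cO_X$) together with the projection formula gives $c_*L^{\otimes 2}=\bar H\otimes c_*\cO_X(-E)=\bar H\otimes I_T$. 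Since $c$ has connected fibers, $h^0(X,L^{\otimes 2})=h^0(Y,\bar H\otimes I_T)$, and the ideal-sheaf sequence on $Y$ twisted by $\bar H$ yields
\begin{equation*}
h^0(X,L^{\otimes 2})\geq h^0(Y,\bar H)-h^0(T,\bar H|_T).
\end{equation*}

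\textbf{Estimating both sides.} The class $\lll+\mm$ has positive BBF square $q_X(\lll+\mm)=2$, so $L\otimes M$ is big and nef; Kawamata--Viehweg vanishing together with Theorem~\ref{thm:RRpoly} then gives
\[
h^0(Y,\bar H)=h^0(X,L\otimes M)=P_{RR,X}(2)=\binom{4}{2}=6.
\]
To compute $(\bar H|_T)^2$, I would use the projection formula along $\pi\coloneqq c|_E\colon E\to T$ together with $\pi_*(L|_E)=L\cdot\ell_E=q_X(\mm-\lll,\lll)=1$:
\[
(\bar H|_T)^2=\int_E\pi^*(\bar H|_T)^2\cdot L|_E=\int_X(\lll+\mm)^2\lll(\mm-\lll)=\int_X\lll^2\mm^2=2,
\]
the last equality following from the polarized Fujiki relations and the vanishings $\int_X\lll^4=\int_X\lll^3\mm=\int_X\lll\mm^3=\int_X\mm^4=0$ forced by $q_X(\lll)=q_X(\mm)=0$. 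I would then identify $T$ as a smooth K3 surface, using Kaledin--Namikawa symplectic-singularity theory and the classification of divisorial contractions of \hk\ fourfolds along a prime divisor of BBF square $-2$: the transverse singularity of $Y$ along $T$ is of type $A_1$ so that $T$ is smooth with a symplectic structure, and the $\P^1$-fibration $E\to T$ combined with $h^{1,0}(E)\subseteq h^{1,0}(X)=0$ forces $h^{1,0}(T)=0$, ruling out the abelian case. For a nef line bundle $\bar H|_T$ of self-intersection $2$ on a K3 surface, Riemann--Roch plus Kawamata--Viehweg give $h^0(T,\bar H|_T)=3$, and hence $h^0(X,L^{\otimes 2})\geq 6-3=3$.

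\textbf{Main obstacle.} The real work lies in the geometric analysis of $c\colon X\to Y$ along $T$: one has to invoke the classification of symplectic singularities in dimension four to pin down the local model of $Y$ near $T$, identify $T$ as a smooth K3 surface, and use the Hodge theory of the $\P^1$-fibration $E\to T$ to exclude the abelian case. Once this geometric picture is secured, the cohomological book-keeping above closes the argument cleanly. The very-general hypothesis on $(X,\lll,\mm)$, in particular $\NS(X)=\Z\lll\oplus\Z\mm$, is used throughout to guarantee that $E$ is the unique prime exceptional divisor and that the intersection-theoretic computations faithfully reflect the geometry.
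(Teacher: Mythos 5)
Your reduction of the proposition to the single inequality $h^0(X,L^{\otimes 2})\geq 2$, and the final appeal to Fukuda, Kawamata, Matsushita and Huybrechts--Xu, coincide with the paper's strategy; but your way of producing the sections is genuinely different. The paper proves $H^2(X,M^{\otimes 2})=H^4(X,M^{\otimes 2})=0$ (Lemma~\ref{levanh2}) and then transfers this to $L^{\otimes 2}$ by deforming $(X,L)$ to $(X,M)$ along the monodromy reflection exchanging $\lll$ and $\mm$, using semicontinuity and Riemann--Roch; you instead push $L^{\otimes 2}\cong c^*\overline H\otimes\cO_X(-E)$ down to $Y$, identify $c_*L^{\otimes 2}\cong\overline H\otimes \cI_\Sigma$, and estimate $h^0(Y,\overline H)-h^0(\Sigma,\overline H\vert_\Sigma)=6-3=3$. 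Granted the structure of the contraction, this bookkeeping is correct (indeed, even if $\Sigma$ were an abelian surface one would get $h^0(\Sigma,\overline H\vert_\Sigma)=1$, so your bound does not actually need the K3/abelian dichotomy), and it is arguably more direct than the paper's deformation argument.

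The genuine gap is the structural input you take for granted: that $c$ has no $2$-dimensional fibers, so that $\Sigma=c(E)$ is a smooth symplectic surface, $E\to\Sigma$ is a $\P^1$-bundle (hence $c_*\cO_E=\cO_\Sigma$, $c_*\cO_X(-E)=\cI_\Sigma$, $\overline H\vert_\Sigma$ ample of square $2$ with $h^0\le 3$). You attribute this to ``Kaledin--Namikawa symplectic-singularity theory and the classification of divisorial contractions along a square $-2$ divisor,'' but no such off-the-shelf statement excludes a deeper, $0$-dimensional singular stratum of $Y$ over which the fiber of $c$ jumps to dimension $2$: the general theory only gives transverse ADE singularities along the codimension-$2$ stratum, and Wierzba's structure theorem (the one the paper does invoke) applies only \emph{after} one knows all fibers have dimension $\le 1$. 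Excluding a contracted surface is precisely the content of Proposition~\ref{prop:NoFibersDimension2}, whose proof is a bespoke argument using the very-general hypothesis through Proposition~\ref{prop:SurfacesOnHK4}: one writes the class of a hypothetical contracted surface $S$ and of the effective class $2(\lll+\mm)(-\lll+\mm)-[S]$ in terms of $\lll^2,\lll\mm,\mm^2,q_X^\vee$, and the Fujiki relations force $25w=t$ with $t\in\{1,\dots,4\}$ and $5w\in\Z$, a contradiction. Without supplying this argument (or an equivalent), your proof stops exactly at what you call the main obstacle. A smaller slip: $h^{1,0}(E)\le h^{1,0}(X)$ is not a valid principle (holomorphic $1$-forms need not restrict surjectively to a divisor); if one wants the K3 identification, the clean route is $\chi(\cO_E)=\chi(\cO_X)-\chi(X,L\otimes M^{-1})=3-1=2$, as in the paper.
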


We will prove Proposition~\ref{prop:CaseC2} in Section~\ref{sec:DivisorialContraction}.\
We will then show in Section~\ref{sec:OGrady} that   $k_L=1$ and $X$ is of  K3$^{[2]}$ deformation type, thus completing the proof of both Theorem~\ref{thm:MainThm} and Theorem~\ref{thm:SYZ}.

We note that Corollary~\ref{cor:NoTwoIsotropicNef} and Proposition~\ref{prop:CaseC2} do already imply a slightly weaker version of Theorem~\ref{thm:SYZ}, where we have no control on $k_L$.\
We include the proof since it does not use the results of Section~\ref{sec:OGrady} and  might apply to more general situations.

\begin{coro}[\HK\ SYZ conjecture]\label{cor:SYZ}
Let $X$ be a  \hk\ fourfold and let~$L$ be a nef line bundle on $X$.\ Set $\lll\coloneqq c_1(L)$.\ Assume   $\int_X \lll^4=0$ and that there exists a class $\mm\in H^2(X,{\Z})$ with $\int_X\lll^2\mm^2=2$.\
There exists a Lagrangian fibration $f\colon X\to\P^2$ with $f^*{\cO_{\P^2}}(1)\isom L^{\otp k_L}$ for some positive integer $k_L$.
\end{coro}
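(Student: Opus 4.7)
The plan is to reduce to the setting of Proposition~\ref{prop:CaseC2} via a deformation argument within the marked moduli space $\mathfrak{M}_\lll$, and then to transfer the conclusion back to $X$ using Grauert's upper semi-continuity.\ First I would apply Theorem~\ref{th43a} to $(X,\lll,\mm)$: since $\int_X\lll^2\mm^2=2$ is square-free, $\lll$ is primitive, and after replacing $\mm$ by $\mm+r\lll$ for a suitable $r\in\Z$ we may assume that the properties~\eqref{hypoprime} hold, with $q_X(\lll,\mm)=1$ and $q_X(\mm)=0$.\ Mimicking the first half of the proof of Theorem~\ref{th21}, the surjectivity of the period map together with the chamber decomposition of the positive cone yields a point $0'\in\mathfrak{M}_\lll$ whose fiber $X'\coloneqq\cX_{0'}$ satisfies $\NS(X')=\Z\lll\oplus\Z\mm$, the line bundle $L'\coloneqq\cL_{0'}$ is nef, and $(X',\lll,\mm)$ is very general in the sense of Propositions~\ref{prop:SurfacesOnHK4}, \ref{lem:NefCone4folds}, and~\ref{prop:CaseC2}.

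On such an $X'$, case~\ref{caseC1} is ruled out by Corollary~\ref{cor:NoTwoIsotropicNef}, so Proposition~\ref{prop:CaseC2} furnishes a Lagrangian fibration $f'\colon X'\to\P^2$ with $(f')^*\cO_{\P^2}(1)\cong L'^{\otp k_L}$ for some positive integer~$k_L$.\ By Matsushita's higher direct image formula and the Leray spectral sequence, as recalled in~\eqref{kL}, one has $h^0(X',L'^{\otp k_L})=3$ and $h^i(X',L'^{\otp k_L})=0$ for every $i>0$.

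The main obstacle is the transfer of these cohomological data back to $X$.\ I would embed $X$ and $X'$ as two fibers of a smooth proper family $p\colon\cY\to T$ of \hk\ fourfolds over a smooth irreducible curve~$T$, equipped with a line bundle~$\cN$ such that $c_1(\cN_t)=\lll$ and $\cN_t$ is nef for every $t\in T$.\ The construction of $T$ parallels the argument in the proof of Theorem~\ref{th21}, using the non-Hausdorff structure of $\mathfrak{M}_\lll$ together with openness of the nef condition to stay in the chamber where $\lll$ remains nef; this introduces no substantive new difficulty.\ Grauert's upper semi-continuity applied to $\cN^{\otp k_L}$ shows that the set $U\coloneqq\{t\in T:h^i(\cY_t,\cN_t^{\otp k_L})=0\text{ for all }i>0\}$ is open and contains the point corresponding to~$X'$, hence is dense in $T$.\ By deformation invariance of the Euler characteristic and Theorem~\ref{th43a}, $\chi(\cY_t,\cN_t^{\otp k_L})=P_{RR,X}(0)=3$ for every $t\in T$, so $h^0(\cY_t,\cN_t^{\otp k_L})=3$ on $U$.\ Upper semi-continuity of $h^0$ then propagates this lower bound from $U$ to the whole of~$T$, yielding $h^0(X,L^{\otp k_L})\geq 3$.\ In particular $\kappa(X,L)\geq 1$, so by Fukuda's theorem (Section~\ref{subsec:Lagrangian4folds}) the line bundle~$L$ is semi-ample; the resulting morphism is a Lagrangian fibration $f\colon X\to\P^2$ by Matsushita and Huybrechts--Xu, and primitivity of~$\lll$ gives $f^*\cO_{\P^2}(1)\cong L^{\otp k'_L}$ for some positive integer $k'_L$.
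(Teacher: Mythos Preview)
Your argument is correct and in fact somewhat more direct than the paper's. Both proofs begin identically: pass via the marked moduli space $\mathfrak{M}_\lll$ to a very general $X'$ with $\NS(X')=\Z\lll\oplus\Z\mm$ and $L'$ nef, rule out case~\ref{caseC1} via Corollary~\ref{cor:NoTwoIsotropicNef}, and obtain a Lagrangian fibration on $X'$ from Proposition~\ref{prop:CaseC2}. The difference lies in how one transfers information back to $X$. The paper invokes Matsushita's deformation lemma \cite[Lemma~2.4]{matlag} to conclude that $\cL_t$ is semi-ample on \emph{all} Picard rank one fibers, and then runs a Baire-category argument on the countable family of closed sets $\{t:h^0(\cX_t,\cL_t^k)\ge2\}$ to extract a single $k$ that works at $X$. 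You instead exploit the vanishing $h^i(X',L'^{\,k_L})=0$ for $i>0$ coming from~\eqref{kL} and propagate the bound $h^0\ge3$ directly by upper semi-continuity for the \emph{fixed} power $k_L$. This bypasses the appeal to \cite{matlag} and the Baire argument entirely, at the cost of using the higher direct image computation~\eqref{kL} (which the paper has available anyway).

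Two minor comments. First, restricting to a curve $T$ is unnecessary: the semi-continuity argument runs equally well on the whole connected component of $\mathfrak{M}_\lll$, where the universal family $(\cX,\cL)$ already lives; this spares you the (slightly delicate, in a non-Hausdorff moduli space) task of producing an irreducible curve through both $0$ and $0'$. Second, your claim that $\cN_t$ remains nef along $T$ is neither needed nor easy to verify---nefness is not open when the Picard rank jumps---but since Grauert's semi-continuity requires only properness and flatness of $\cN^{k_L}$, you can simply drop that clause.
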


\begin{proof}
We can argue as in the beginning of the proof of Theorem~\ref{th21} and consider the moduli space ${\mathfrak M}_\lll$ of marked hyper-K\"ahler manifolds with a fixed $(1,1)$-class $\lll$.\
Then there exist  points $0,0^\prime\in {\mathfrak M}_\lll$ such that $(\cX_0,\cL_0)=(X,L)$ and $(X^\prime,L^\prime)\coloneqq (\cX_{0^\prime},\cL_{0^\prime})$ has the property that its N\'eron--Severi group is generated by the classes $\lll$ and $\mm$, $L^\prime$ is nef, and $X^\prime$ is very general in the sense of Proposition~\ref{lem:NefCone4folds}.

By Proposition~\ref{prop:CaseC2}, some power $L^{\prime\,k}$ defines a Lagrangian fibration $X^\prime\to\P^2$.\
According to~\cite[Lemma 2.4]{matlag}, this implies that the line bundle $\cL_t$ is semi-ample for all fibers~$\cX_t$ of Picard rank one.\
In particular, for   very general points $t\in{\mathfrak M}_\lll$, one has $h^0(\cX_t,\cL_t^{k_t})\geq2$ for some positive integer $k_t$.\
Hence, the  countable union  of the  closed sets $\{t\in{\mathfrak M}_\lll\mid h^0(\cX_t,\cL_t^{k})\geq2\}$, for all $k\in\Z_{>0}$,  contains all very general points.\
This is enough to conclude that there exists one $k$ for which the corresponding set is all of ${\mathfrak M}_\lll$ and, in particular $h^0(X,L^k)\geq2$.\
Hence, $L$ is semi-ample by Section~\ref{subsec:Lagrangian4folds}.
\end{proof}


\section{The case of two nef isotropic classes}\label{sec:TwoNefIsotropic}

Let $(X,\lll,\mm)$ be a very general triple satisfying~\eqref{hypo}.\
By Proposition~\ref{prop:SurfacesOnHK4}, we have  $\NS(X)=\Z\lll\oplus\Z\mm$.\
We assume in this section that we are in case~\ref{caseC1}: the class $\mm$ is isotropic and both~$\lll$ and $\mm$ are nef.\ Our aim is to prove Proposition~\ref{prop:CaseC1} (in Section~\ref{proof}).

As observed in Section~\ref{subsec:HKNefCone}, all cones of divisors are equal and the class $p \lll+q \mm$ is ample on $X$ for all integers $p,q>0$.\
By Kodaira vanishing, we get
\begin{equation}\label{eq:KodairaVanishingTwoNef}
h^0(X,L^{\otp p}\otimes M^{\otp q})=\chi(X,  L^{\otp p}\otimes M^{\otp q})=P_{RR,X}(2pq) =\binom{pq+3}{2}
\end{equation}
and in particular
\[
h^0(X,L\otimes M)=6,\quad h^0(X, L^{\otp 2}\otimes M)=10,\quad h^0(X, L^{\otp 3}\otimes  M^{\otp 2})=36.
\]
The next two sections and the paper  \cite{voisinfib} are devoted to the  study of  the induced rational map
\[
\phi_{L\otimes M}\colon X\dra \P^5.
\]
This map was studied by O'Grady in~\cite[Section~4]{og1} when~$X$ is   of~K3$^{[2]}$ numerical type, and for a very general deformation of the pair $(X,L\otimes M)$.\

\subsection{Case where $L$ and $M$ both have nonzero sections}\label{nonz}

The following lemma will allow  us to apply Lemma~\ref{lem:H0large}.\ We keep the same hypotheses: $(X,\lll,\mm)$ is a very   general triple satisfying~\eqref{hypo} and we are in case~\ref{caseC1}.

\begin{lemm}\label{lecasholhoM}
Assume that $H^0(X,L)\ne 0$ and $H^0(X,M)\ne 0$.\
Then
\begin{equation*}
h^0(X,L)+h^0(X,M)\geq 3,
\end{equation*}
hence either $h^0(X,L)\geq 2$ or $h^0(X,M)\geq2$.
\end{lemm}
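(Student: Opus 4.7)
The plan is to assume for contradiction that $h^0(X,L) = h^0(X,M) = 1$, with unique effective representatives $D_L \in |L|$ and $D_M \in |M|$, and to derive a contradiction from the numerical invariants of the intersection surface $S \coloneqq D_L \cap D_M$. First, I would exploit that in case~\ref{caseC1} the pseudo-effective cone equals $\R_{\geq 0}\lll + \R_{\geq 0}\mm$ and that $\lll,\mm$ are primitive in $\NS(X) = \Z\lll \oplus \Z\mm$: any decomposition of $D_L$ would have components with classes $a_i\lll + b_i\mm$ in $\Z_{\geq 0}\lll + \Z_{\geq 0}\mm$ summing to $\lll$, forcing $D_L$ to be prime (and similarly $D_M$). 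The same cone analysis shows $D_L \not\subset D_M$, since $\mm - \lll \notin \Psef(X)$; hence $(s_L,s_M)$ is a regular sequence, making $S$ a 2-dimensional local complete intersection whose structure sheaf fits in the Koszul resolution, which when tensored with $L\otimes M$ becomes
\[
0 \to \cO_X \to M \oplus L \to L \otimes M \to (L\otimes M)|_S \to 0.
\]
By adjunction together with $\omega_X \cong \cO_X$, one has $\omega_S \cong (L\otimes M)|_S$, and this is ample since $L\otimes M$ is ample in case~\ref{caseC1}.

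Next, I would compute the relevant invariants of $S$. Using the polarized Fujiki relation~\eqref{eq:Fujiki3} together with $q_X(\lll) = q_X(\mm) = 0$ and $q_X(\lll,\mm) = 1$ from~\eqref{hypoprime}, one immediately obtains $\int_X \lll^3\mm = \int_X \lll\mm^3 = 0$, and hence
\[
K_S^2 = \int_X (\lll + \mm)^2\cdot \lll\cdot \mm = 2\int_X \lll^2\mm^2 = 4.
\]
Splitting the Koszul resolution into two short exact sequences and taking $H^0$, the vanishing $H^1(X,\cO_X) = 0$ (from $X$ being simply connected) forces $h^0(\cI_S \otimes (L\otimes M)) = h^0(L) + h^0(M) - h^0(\cO_X) = 1$. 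Combined with $h^0(X,L\otimes M) = \chi(X,L\otimes M) = 6$, obtained from Kodaira vanishing applied to the ample line bundle $L\otimes M$ together with the explicit form of $P_{RR,X}$ given by Theorem~\ref{th43a}, this yields the lower bound
\[
p_g(S) = h^0(\omega_S) \geq h^0(X,L\otimes M) - h^0(\cI_S \otimes (L\otimes M)) = 5.
\]

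The contradiction would then come from Noether's inequality: a minimal surface of general type satisfies $K^2 \geq 2 p_g - 4$, so with $K_S^2 = 4$ and $p_g(S) \geq 5$ we would need $4 \geq 6$. The main obstacle is securing enough regularity of $S$ for Noether's inequality to apply directly, since $D_L,D_M$ are fixed rather than generic members of their (one-point) linear systems and may a priori be singular. I would address this either by a genericity argument in the 18-dimensional family of triples satisfying~\eqref{hypo} to guarantee that $D_L,D_M$ have at worst canonical singularities along their intersection, or by passing to the minimal resolution $\widetilde{S}\to S$ and invoking the version of Noether's inequality valid for Gorenstein projective surfaces with ample dualizing sheaf (to which $S$ belongs, being a local complete intersection with $\omega_S$ ample). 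Once this technical point is settled, the contradiction rules out $(h^0(L),h^0(M)) = (1,1)$, and therefore $h^0(X,L) + h^0(X,M) \geq 3$.
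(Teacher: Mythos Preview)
Your numerical setup is correct and matches the paper's: the Koszul resolution of $\cI_S$ twisted by $L\otimes M$ gives $h^0(X,\cI_S(L\otimes M)) = h^0(X,L) + h^0(X,M) - 1 = 1$, adjunction gives $\omega_S \cong (L\otimes M)|_S$ with $\omega_S^2 = 4$, and restriction gives $p_g(S) \geq 5$. The tension between $K_S^2 = 4$ and $p_g(S) \geq 5$ is precisely what the paper exploits as well. The gap is in the final step.

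Neither proposed fix for applying Noether's inequality works. For the genericity route: since $h^0(X,L) = h^0(X,M) = 1$, the divisors $D_L$ and $D_M$ are \emph{unique}, so there is no Bertini freedom within their linear systems; and the triple $(X,\lll,\mm)$ is already assumed very general in the $18$-dimensional family, so you cannot deform further. No mechanism is offered for why very-generality of $X$ should control the local singularities of the intersection of two \emph{fixed} divisors. For the second route: there is no Noether inequality for arbitrary Gorenstein projective surfaces with ample dualizing sheaf. You have not shown that $S$ is normal, nor that it has canonical singularities---indeed you have not even shown that $S$ is irreducible or reduced (the regular-sequence argument only yields that $S$ is a pure $2$-dimensional local complete intersection). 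If $S$ has worse-than-canonical singularities, passing to a resolution $\tilde S$ gives $K_{\tilde S}^2 \leq K_S^2$ and $p_g(\tilde S) \leq p_g(S)$, and both inequalities go the wrong way for transferring a violation of Noether from $S$ to $\tilde S$.

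The paper's proof exploits the same numerics but sidesteps the singularity issue. It first proves, using the very-general hypothesis and the description of $\Hdg^4(X,\Q)$ from Proposition~\ref{prop:SurfacesOnHK4}, that $S$ is irreducible and reduced, and that its image under $\phi_{L\otimes M}$ is a genuine surface rather than a curve. It then twists the Koszul resolution by $L^{\otp2}\otimes M^{\otp2}$ to compute $h^0\bigl(S,(L^{\otp2}\otimes M^{\otp2})|_S\bigr) = 7$. Hence the image surface $\overline S \subset \P^4$---which is automatically reduced---lies on at least $\binom{6}{2} - 7 = 8$ independent quadrics, contradicting Castelnuovo's bound of $\binom{3}{2} = 3$ for a nondegenerate irreducible surface in $\P^4$. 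Working with the image in projective space replaces the delicate local question about $\Sing(S)$ with the coarser global fact that $S$ is integral, which the paper can establish by Hodge-theoretic means.
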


\begin{proof}
Let $\sigma$ be a nonzero section of $L$ and let $\tau$ be a nonzero section of $M$.\
The divisors $D_\sigma$ of $\sigma$ and $D_\tau$ of $\tau$   have no common component, since the class of any effective divisor on $X$ is an integral combination of $\lll$ and $\mm$ with nonnegative coefficients.\
It follows that
\[
\Sigma_{\sigma\tau}\coloneqq D_\sigma\cap D_\tau
\]
is a surface whose ideal sheaf $\cI_{\Sigma_{\sigma\tau}}$ admits the Koszul resolution
\begin{equation*}\label{eqkosresSigma}
0\to  (L\otimes M)^{-1}\to    L^{-1}\oplus  M^{-1}\to   \cI_{\Sigma_{\sigma\tau}} \to  0.
\end{equation*}
If we tensor it by $L\otimes M$, the associated  exact sequence in cohomology gives
\begin{equation}\label{eqineq1}
h^0(X,\cI_{\Sigma_{\sigma\tau}}(L\otimes M))=h^0(X,L)+h^0(X,M)-1.
\end{equation}
If we tensor the resolution by $L^{\otp 2}\otimes M^{\otp 2}$, we get, using~\eqref{eq:KodairaVanishingTwoNef},
\begin{equation}\label{eqineq2}
\begin{split}
h^0(X,\cI_{\Sigma_{\sigma\tau}}(L^{\otp 2}\otimes M^{\otp 2}))&=h^0(X,L^{\otp 2}\otimes M)+h^0(X,L\otimes M^{\otp 2})-h^0(X,L\otimes M )=14,\\ h^1(X,\cI_{\Sigma_{\sigma\tau}}(L^{\otp 2}\otimes M^{\otp 2}))&=0.
\end{split}
\end{equation}
Using again~\eqref{eq:KodairaVanishingTwoNef}, we get $h^0(X,L^{\otp 2}\otimes M^{\otp 2})=21$, and, from~\eqref{eqineq2}, we deduce
\begin{equation}\label{eqineq3}
h^0(\Sigma_{\sigma\tau},(L^{\otp 2}\otimes M^{\otp 2})\vert_{\Sigma_{\sigma\tau}})=7.
\end{equation}

Assume by contradiction $h^0(X,L)+h^0(X,M)=2$.\
Then, by~\eqref{eqineq1}, we get
\begin{equation}\label{eqineq4}
\rk \bigl(H^0(X, L\otimes M)\lra  H^0(\Sigma_{\sigma\tau}, ( L\otimes M)\vert_{\Sigma_{\sigma\tau}})\bigr)= 5.
\end{equation}
In particular, the surface $\Sigma_{\sigma\tau}$ is not contained in the base locus of $ |L\otimes M|$.\

We prove now the following properties.

\begin{claim}\label{claimavecc2}
In the notation used above,
\begin{enumerate}[{\rm (a)}]
\item\label{enum:Sigma1} the surface $\Sigma_{\sigma\tau}$ is irreducible and reduced;
\item\label{enum:Sigma2} the image of the rational map
\[
\phi_{L\otimes M}\vert_{\Sigma_{\sigma\tau}}\colon  \Sigma_{\sigma\tau}\dra \P^5
\]
is a surface.
\end{enumerate}
\end{claim}

\begin{proof}
To prove~\ref{enum:Sigma1}, we note that the surface $\Sigma_{\sigma\tau}$ has class $\lll\mm$ and its associated matrix (as in~\eqref{eqmatriceO}) is
\[
M_{[\Sigma_{\sigma\tau}]}=\begin{pmatrix}
0 & 2 \\
2 & 0
\end{pmatrix}.
\]
If $\Sigma_{\sigma\tau}$ is not irreducible or not reduced, there   exist surfaces $\Sigma_1$, $\Sigma_2$ in $X$, such that
\begin{equation*}
M_{[\Sigma_1]}=M_{[\Sigma_2]}=\begin{pmatrix}
0 & 1 \\
1 & 0
\end{pmatrix}.
\end{equation*}
We show that no such   decomposition  exists under our assumptions.\

Since $(X,\lll,\mm)$ is very general, we may apply Proposition~\ref{prop:SurfacesOnHK4} and write
\[
[\Sigma_i]=t_{i} \lll^2+u_{i}\lll\mm+v_{i} \mm^2+w_{i} q_X^\vee
\]
with $t_{i}, u_i,v_i\in \Q_{\ge0} $ (because $\lll$ and $\mm$ are nef) and $w_i\in \Q $.\
Using \eqref{eqdematl2m2}, we find
\begin{equation*}\label{eqpoureta3eta4}
t_{i}=v_i=0,\quad 2u_{i}+25w_{i}=1,
\end{equation*}
so that
\[
[\Sigma_i]=\frac{1}{2}\Bigl(1-25w_{i}\Bigr) \lll\mm+w_{i} q_X^\vee.
\]
We obtain $w_2=- w_1$, so we can assume $w=:w_1\geq 0$.\
We have
\begin{equation*}\label{eqSigmai}
[\Sigma_i]=\frac{1}{2}\lll\mm-(-1)^i w\Bigl(q_X^\vee-\frac{25}2\lll\mm\Bigr),\quad \textnormal{for }i\in\{1,2\}.
\end{equation*}
Using~\eqref{qxc}, we get
\begin{equation*}\label{eqnumbres}
\Sigma_1^2= \Sigma_2^2=\frac{1}{2}+525w^2,\qquad \Sigma_1\Sigma_2=\frac{1}{2}-525w^2.
\end{equation*}
Since these numbers are integers, $w$ is positive and $2\cdot w^2=1\cdot 3\cdot 7\cdot (5w)^2$ is an integer, which implies that the denominator of $w$ is at most $5$, so that $w\geq \frac{1}{5}$.\
Then,
\begin{equation}\label{eqeffectivec2}
w q_X^\vee=[\Sigma_1]+\frac{1}{2}(25w -1)\lll\mm
\end{equation}
with $25w -1>0$.\ But this is impossible: by Proposition~\ref{lem:NefCone4folds} and~\cite[Proposition 3.2]{huyKahler}, the closure of the K\"ahler cone of $X$ is the subset $\overline{\cK}_X\subset H^{1,1}(X,\R)$ consisting of those real $(1,1)$-classes $\omega$ satisfying $q_X(\omega)\geq 0$, $q_X(\omega,\omega_0)>0$ for a K\"ahler class $\omega_0$, and $q_X(\omega,\lll)\ge0$ and $ q_X(\omega,\mm)\geq0$.
For all $\omega\in \overline{\cK}_X$ and all effective classes $\eta\in{\Hdg}^4(X,\Q)$, we have $\int_X\eta\omega^2\geq 0$.
Furthermore, for all $\omega\in \overline{\cK}_X$ such that $q_X(\omega)=0$, we have $\int_X q_X^\vee \omega^2=0$; so by~\eqref{eqeffectivec2} we have
\[
0=\int_{\Sigma_1}\omega^2+\frac{1}{2}(25w -1)\int_X\lll\mm\omega^2,
\]
where both terms on the right are nonnegative.\
Hence $\int_X\lll\mm\omega^2=0$; this implies that the class~$\lll\mm$ is proportional to $q_X^\vee$, which is absurd.

Claim~\ref{enum:Sigma2}, namely the fact that the image $\phi_{L\otimes M}(\Sigma_{\sigma\tau})$ is a surface, is proved as follows.\
If $\phi_{L\otimes M}(\Sigma_{\sigma\tau})$ is a linearly nondegenerate curve in $\P^4$, it has degree~$d\ge4$.\ Let $\fff$ be the class of a fiber of $\Sigma_{\sigma\tau}\dra\phi_{L\otimes M}(\Sigma_{\sigma\tau})$.\
 We have
\[
(\lll+\mm)[\Sigma_{\sigma\tau}]=(\lll+\mm)\lll\mm = \lll^2\mm + \lll\mm^2
\]
and this class can be written as $d\fff+\eee$, where $\eee$ is an effective curve class.\  This contradicts the fact that the class of any effective curve in $X$ has the form $t \lll^2\mm+u \lll\mm^2$  with   $t, u\in\frac12\Z_{\ge0}$.\end{proof}
The claim gives us a contradiction, since we know by~\eqref{eqineq4} that $\phi_{L\otimes M}(\Sigma_{\sigma\tau})$ is a linearly nondegenerate surface in $\P^4$, while, by~\eqref{eqineq3}, it is contained in at least~$\binom{4+2}{2}-7=8$
independent quadrics; this contradicts Castelnuovo's lemma, which says that it is contained in at most $\binom{2+1}{2}$ independent quadrics.
\end{proof}

\subsection{Case where either $L$ or $M$   has no nonzero sections}\label{zero}
We keep the same hypotheses:~$(X,\lll,\mm)$ is a very   general triple satisfying~\eqref{hypo} and we are in case~\ref{caseC1}.\ This section is devoted to the proof of the following result.

\begin{prop}\label{prop:CaseB'2}
If  either $H^0(X,L)=0$ or $H^0(X,M)=0$, the image of the rational map $\phi_{L\otimes M}\colon X\dra \P^5$ is rationally connected.
\end{prop}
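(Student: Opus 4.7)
Suppose without loss of generality that $H^0(X, L) = 0$. The first observation is that this vanishing forces every effective divisor $D \in |L \otimes M|$ to be irreducible: in case~\ref{caseC1} we have $\Psef(X) = \R_{\ge 0}\lll + \R_{\ge 0}\mm$, so any decomposition $D = D_1 + D_2$ into nonzero effective divisors would, up to reordering, require $[D_1] = \lll$ and $[D_2] = \mm$, contradicting $|L| = \emptyset$.

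Next, I would analyze the rational map $\phi_{L \otimes M} \colon X \dra \P^5$ directly. By~\eqref{eq:KodairaVanishingTwoNef}, one has $h^0(X, L \otimes M) = 6$, so the image $Z \coloneqq \overline{\phi_{L \otimes M}(X)}$ is linearly nondegenerate in $\P^5$. Since $\int_X (\lll+\mm)^4 = c_X q_X(\lll+\mm)^2 = 12 \ne 0$, the map is generically finite onto $Z$, so $Z$ is a hypersurface of some degree $d \ge 2$ and $\phi_{L\otimes M}$ has degree $e = 12/d$.

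To show rational connectedness, I would bound $d$ using Kodaira dimension. Resolving the indeterminacy of $\phi_{L \otimes M}$ and a singular resolution of $Z$, the generic finiteness combined with $\kappa(X) = 0$ forces $\kappa(Z) \le 0$. This immediately rules out $d \ge 7$, where on any resolution $K_{\widetilde Z}$ pulls back a big multiple of $\cO_Z(d-6)$. For $d \in \{2,3,4,5\}$, $Z$ is a Fano hypersurface; since it arises as the image of a smooth hyper-K\"ahler manifold, its singularities should be mild enough (at worst canonical) to apply Koll\'ar--Miyaoka--Mori (or Zhang's extension to klt Fano varieties) to conclude that $Z$ is rationally connected.

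The main obstacle is ruling out the intermediate Calabi--Yau case $d = 6$, $e = 2$, in which $Z$ is a (possibly singular) sextic hypersurface in $\P^5$ with $K_Z = \cO_Z$ and $\phi_{L\otimes M}$ induces a birational involution $\iota$ of $X$. Since $K_X = 0 = K_Z$, Riemann--Hurwitz forces the associated finite cover to be unramified in codimension one. For $Z$ smooth this directly contradicts the simple connectedness of a smooth sextic fourfold (by the Lefschetz hyperplane theorem), since $X$ would supply a nontrivial \'etale double cover. The singular case requires a finer analysis: I would combine the irreducibility of divisors in $|L \otimes M|$ from the first step with the very-general Hodge-theoretic restrictions of Proposition~\ref{prop:SurfacesOnHK4} (which limit the possible action of $\iota^*$ on $\NS(X) = \Z\lll \oplus \Z\mm$ and on $q_X^\vee$) to rule out this final case, and conclude that in all remaining cases $Z$ is a Fano hypersurface, hence rationally connected.
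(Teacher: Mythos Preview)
Your approach is genuinely different from the paper's, but it has several real gaps.

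\textbf{The map need not be generically finite.} You assert that because $\int_X(\lll+\mm)^4=12\ne 0$, the map $\phi_{L\otimes M}$ is generically finite onto its image. This does not follow: the self-intersection being nonzero only tells you $L\otimes M$ is big (so \emph{some} power defines a generically finite map), not that the linear system $|L\otimes M|$ itself does. The paper treats the case $\dim Y<4$ separately, invoking Lin's theorem \cite{linthese} to conclude rational connectedness in that case; you have simply omitted it.

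\textbf{The degree formula $de=12$ is unjustified.} Even granting $\dim Y=4$, the linear system $|L\otimes M|$ may have a base locus in codimension $\ge 2$ (your irreducibility observation only rules out a fixed divisorial component). After resolving indeterminacy, the pullback of $\cO_{\P^5}(1)$ differs from $\pi^*(L\otimes M)$ by an effective exceptional class, so one only gets $de\le 12$, not equality, and the neat case analysis $d\in\{2,3,4,6,12\}$ collapses.

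\textbf{Singularities and the sextic case.} You claim the image ``should'' have at worst canonical singularities because it is the image of a smooth \hk\ manifold; there is no such principle, and the Fano implies rationally connected step requires exactly this. Your treatment of $d=6$ is not a proof: you reduce to an unspecified ``finer analysis'' of the involution, but controlling the singularities of a degree-$2$ image and the fundamental group of its smooth locus is precisely the hard part.

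By contrast, the paper never computes the degree of $Y$. It studies a general plane section $C_{0,W_3}$ of $Y$ via the multiplication map $\mu\colon H^0(L\otimes M)\otimes H^0(L^{2}\otimes M)\to H^0(L^{3}\otimes M^{2})$: the vanishing $H^0(X,L)=0$ forbids rank-$2$ elements in $\Ker(\mu)$, and a Segre-class computation then shows $\mu\vert_{W_3}$ has rank $\ge 28$. Combining this with the Hopf lemma and a Castelnuovo-type argument (Lemma~\ref{lepourconcprop72}) forces $C_{0,W_3}$ to be rational or a plane cubic, whence $Y$ is rationally connected (directly, or as a cubic fourfold via Lin's theorem applied to its MRC quotient).
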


Our aim is to prove that
 \[
Y\coloneqq\Im(\phi_{L\otimes M})\subset \P^5  
\]
is rationally connected, that is, that a general pair of points of $Y$ are joined by a rational curve.\ 
If \mbox{$\dim (Y)<4$,} this holds  by \cite[Theorem~1.4]{linthese}, so it suffices to prove Proposition~\ref{prop:CaseB'2} in  the case $\dim (Y)=4$, that is, when $Y\subset \P^5$ is a hypersurface.\

 We will assume  $H^0(X,L)=0$; the case  $H^0(X,M)=0$ is of course analogous (just permute~$L$ and $M$).\ 

\begin{prop}\label{prop6.3precise} If   $H^0(X,L)=0$ and $\dim (Y)=4$, a plane section $C_{0,W_3}$ of $Y$ defined by a general vector subspace $W_3\subset H^0(X,L\otimes M)$ of dimension $3$   is either  of geometric genus~$0$ or  a smooth cubic curve.
\end{prop}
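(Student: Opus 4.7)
The plan is to control $C_{0,W_3}$ by studying the base-locus curve $\tilde C := \mathrm{Bs}(W_3)\subset X$ together with the induced map $\tilde C \to C_{0,W_3}$, combining adjunction, Riemann--Hurwitz, and the hypothesis $H^0(X,L)=0$.

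First I would set up $\tilde C$. For a general $W_3\subset H^0(X,L\otimes M)$ of dimension $3$, $\tilde C$ is the transverse intersection of three general divisors in $|L\otimes M|$. Since $L\otimes M$ is ample (the class $\lll+\mm$ satisfies $q_X(\lll+\mm)=2>0$ and pairs positively with every effective curve class), Bertini's theorem gives $\tilde C$ smooth and irreducible. Using $K_X=0$ and the Koszul-type computation of the determinant of the normal bundle, adjunction gives $K_{\tilde C}=3(L\otimes M)\vert_{\tilde C}$, of degree $3\int_X(\lll+\mm)^4=36$, hence $g(\tilde C)=19$. Restricting $\phi_{L\otimes M}$ to $\tilde C$ yields a morphism $\phi\colon\tilde C\to C_{0,W_3}\subset\Pi\cong\P^2$ of degree $e:=\deg\phi_{L\otimes M}$, with $\deg C_{0,W_3}=d=12/e$ in $\Pi$. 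Since $Y$ spans $\P^5$ (the six sections of $L\otimes M$ are linearly independent), $d\ge 2$, so $e\in\{1,2,3,4,6\}$.

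Next I would apply Riemann--Hurwitz to $\tilde C\to\bar C$, where $\bar C$ is the normalization of $C_{0,W_3}$: this gives $36=2g(\tilde C)-2\ge e(2p_g(C_{0,W_3})-2)$, i.e., $p_g(C_{0,W_3})\le 1+18/e$. Several cases are immediate from this:
\begin{itemize}
\item $e=6$: $C_{0,W_3}$ is a plane conic, so $p_g=0$.
\item $e=4$: $C_{0,W_3}$ is a plane cubic, either smooth (then $p_g=1$ and it is a smooth cubic as claimed) or singular (then $p_g=0$).
\end{itemize}
In both cases the conclusion of the proposition holds. It remains to rule out the cases $e\in\{1,2,3\}$ combined with $p_g(C_{0,W_3})>0$.

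The main obstacle is exactly this last step, and it is where the hypothesis $H^0(X,L)=0$ is crucial. The strategy is as follows. A non-trivial differential $\omega\in H^0(\bar C,\Omega^1_{\bar C})$ pulls back to a nonzero section of $H^0(\tilde C,\Omega^1_{\tilde C})$; using the conormal exact sequence
\[
0\longrightarrow N^{\vee}_{\tilde C/X}\longrightarrow \Omega^1_X\vert_{\tilde C}\longrightarrow \Omega^1_{\tilde C}\longrightarrow 0
\]
with $N^{\vee}_{\tilde C/X}=((L\otimes M)^{-1}\vert_{\tilde C})^{\oplus 3}$, and the vanishing $H^0(X,\Omega^1_X)=0$, any such $\omega$ must arise via the connecting map from $H^1(\tilde C,N^{\vee}_{\tilde C/X})$. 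Using the Koszul resolution of $\cI_{\tilde C}\subset\cO_X$ (which reduces $H^*(X,L\otimes\cI_{\tilde C})$ and its variants to $H^*(X,L\otimes(L\otimes M)^{-k})$, all computable by Kodaira vanishing except for $k=1$ where only $M^{-1}$ survives), together with $H^0(X,L)=0$, one expects to pin down how many independent such classes exist. The intersection and degree bounds forced by $e\in\{1,2,3\}$ and $d=12/e\ge 4$ should then be incompatible with the allowed dimension of pull-back differentials, forcing $p_g(C_{0,W_3})=0$.

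The hardest part will be that last cohomological step: translating the purely cohomological vanishing $H^0(X,L)=0$ into a sharp bound on $p_g(C_{0,W_3})$ when $C_{0,W_3}$ is a plane curve of degree $\ge 4$, so as to rule out plane quartics, plane sextics, or plane dodecics of positive geometric genus. This is expected to parallel O'Grady's study in \cite{og1} of the map $\phi_{L\otimes M}$ on hyper-K\"ahler fourfolds of K3$^{[2]}$ numerical type, refined by Voisin in \cite{voisinfib}; geometrically, the failure of $L$ to have sections should prevent $\tilde C$ from being ``too many-to-one'' onto a plane curve of large positive genus.
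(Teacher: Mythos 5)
There is a genuine gap, in two places. First, your setup silently assumes that $|L\otimes M|$ is base-point free: you take $\tilde C$ to be a smooth irreducible transverse complete intersection of three general members (Bertini only gives smoothness away from the base locus, and ampleness of $L\otimes M$ does not give global generation here), and you use the exact relations $\deg K_{\tilde C}=36$ and $d\cdot e=12$. In the actual situation $\phi_{L\otimes M}$ is only a rational map, the common zero locus of the sections in $W_3$ may contain fixed components coming from the base locus, and one only gets inequalities such as $d\cdot e\le \int_X(\lll+\mm)^4=12$; the paper is careful to work only with the \emph{mobile part} $C_{W_3}$ of this locus, which is irreducible by Bertini but not claimed to be smooth or of known degree. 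The genus-$19$/Riemann--Hurwitz part could probably be repaired (arithmetic genus still bounds geometric genus), but the exact divisibility $e\in\{1,2,3,4,6\}$ and the clean dispatch of $e=6,4$ rest on the unproved base-point-freeness.

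Second, and decisively, the cases you defer --- $e\in\{1,2,3\}$, i.e.\ plane sections of degree $\ge 4$ with possibly positive geometric genus --- are exactly the content of the proposition, and your proposal does not prove them: the conormal-sequence/Koszul strategy is stated with ``one expects'' and ``should be incompatible'' and is never carried out, nor is it clear it yields bounds anywhere near sharp enough (your Riemann--Hurwitz estimate alone allows $p_g$ up to $19$). This is where the hypothesis $H^0(X,L)=0$ really enters in the paper, and the actual argument is substantial and quite different: $H^0(X,L)=0$ forces every divisor in $|L\otimes M|$ to be irreducible and reduced, hence there are no rank-$2$ tensors in $\Ker\bigl(\mu\colon W_6\otimes W_{10}\to W_{36}\bigr)$ (Lemma~\ref{lem:NoRk2}); a Segre-class computation on $\P(\cS_3\otimes W_{10})$ over $\Gr(3,W_6)$ then shows $\rk(\mu\vert_{W_3})\ge 28$ for general $W_3$ (Lemma~\ref{leW6W10}); this bounds $\dim W'_{3,2}\le 8$ on the mobile curve $C_{W_3}$, and the conclusion is extracted from the multiplication maps $W'_{1,1}\otimes W'_{2,1}\to W'_{3,2}$ via the Hopf lemma and the linear-system analysis of Lemma~\ref{lepourconcprop72}, which is what produces the dichotomy ``rational or smooth plane cubic''. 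None of this machinery, or a workable substitute for it, appears in your proposal, so the proof is incomplete precisely at its hardest step.
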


Proposition \ref{prop6.3precise} implies Proposition~\ref{prop:CaseB'2} as follows.\  Proposition~\ref{prop6.3precise} then  shows that either~$Y$ is  rationally connected since  its general plane sections are rational curves, or it is  a cubic hypersurface, hence it is uniruled.\ In the second case, we can   apply \cite[Theorem~1.4]{linthese} again to the maximal rationally connected quotient of $Y$, which has dimension $<4$.\ It is thus rationally connected and is therefore a point by~\cite{GHS}, proving again that $Y$ is rationally connected.

We   start the proof of Proposition~\ref{prop6.3precise} by introducing some notation.\
Set
\begin{align*}
&W_6\coloneqq H^0(X,L\otimes M),\\
&W_{10}\coloneqq H^0(X,L^{\otp 2}\otimes M),\\
&W_{36}\coloneqq H^0(X,L^{\otp 3}\otimes  M^{\otp 2}),
\end{align*}
with multiplication map
\[
\mu\colon W_6\otimes W_{10}\lra W_{36}.
\]

\begin{lemm}\label{lem:NoRk2}
If $H^0(X,L)=0$,  there are no rank-2 elements in $\Ker(\mu)$.
\end{lemm}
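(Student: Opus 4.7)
The plan is to assume for contradiction that $\ker(\mu)$ contains a rank-$2$ element, written as $\xi=\sigma_1\otimes\tau_1+\sigma_2\otimes\tau_2$ with $\sigma_1,\sigma_2\in W_6$ linearly independent and $\tau_1,\tau_2\in W_{10}$ linearly independent. The hypothesis $\mu(\xi)=0$ reads $\sigma_1\tau_1=-\sigma_2\tau_2$ in $H^0(X,L^{\otp 3}\otimes M^{\otp 2})$, so the corresponding effective divisors satisfy
\[
\div(\sigma_1)+\div(\tau_1)=\div(\sigma_2)+\div(\tau_2).
\]

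First I would extract the largest effective divisor $F$ contained in both $\div(\sigma_1)$ and $\div(\sigma_2)$ and factor $\sigma_i=s\cdot\sigma_i'$, where $s\in H^0(X,\cO_X(F))$ cuts out $F$ and $\sigma_i'\in H^0(X,L\otimes M\otimes\cO_X(-F))$ has zero divisor sharing no component with that of $\sigma_{3-i}'$. Since we are in case~\ref{caseC1}, every effective divisor class on $X$ is a nonnegative integer combination of $\lll$ and $\mm$; applied simultaneously to $F$ and to $\div(\sigma_i)-F$, this forces $[F]=p\lll+q\mm$ with $p,q\in\{0,1\}$. The hypothesis $H^0(X,L)=0$ rules out $[F]=\lll$ (since $\NS(X)=\Pic(X)$ on a \hk\ manifold, no effective divisor of class $\lll$ exists) and $[F]=\mm$ (which would place $\sigma_i'$ in $H^0(X,L)=0$), while $[F]=\lll+\mm$ forces $F=\div(\sigma_1)=\div(\sigma_2)$, so $\sigma_1$ and $\sigma_2$ are both proportional to $s$, contradicting linear independence. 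Hence $F=0$ and $\div(\sigma_1),\div(\sigma_2)$ share no divisorial component.

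Next, comparing multiplicities along an arbitrary irreducible component of $\div(\sigma_1)$ in the divisorial equality above, the absence of common components forces $\div(\sigma_1)\le\div(\tau_2)$; symmetrically, $\div(\sigma_2)\le\div(\tau_1)$. Factoring the sections accordingly yields $\tau_1=\sigma_2\cdot\tau_1'$ and $\tau_2=\sigma_1\cdot\tau_2'$ for some
\[
\tau_1',\tau_2'\in H^0\bigl(X,L^{\otp 2}\otimes M\otimes(L\otimes M)^{-1}\bigr)=H^0(X,L)=0,
\]
contradicting $\tau_1,\tau_2\ne 0$.

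The main content of the argument is the first step: one must use the explicit structure of the effective cone in case~\ref{caseC1} (cf.\ Proposition~\ref{lem:NefCone4folds}), together with $H^0(X,L)=0$, to rule out any nontrivial common divisorial factor. Once $F=0$ is secured, the factoring of sections and the final vanishing step are routine.
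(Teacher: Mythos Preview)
Your proof is correct and follows essentially the same approach as the paper's. The paper phrases the key step slightly differently---observing directly that, since $H^0(X,L)=0$ and the effective cone in case~\ref{caseC1} is $\R_{\ge0}\lll+\R_{\ge0}\mm$, every divisor in $|L\otimes M|$ is irreducible and reduced, whence $\div(\sigma_1)$ and $\div(\sigma_2)$ share no component---but your GCD analysis arrives at the same conclusion by the same underlying reasoning, and the final factoring step is identical.
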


\begin{proof}
A rank-$2$ element in~$\Ker(\mu)$ is given by a nontrivial relation
$\alpha\sigma=\beta \tau $ in $H^0(X,L^{\otp 3}\otimes M^{\otp 2})$ with $\alpha,\,\beta\in H^0(X,L\otimes M)$ linearly independent.\ Since $H^0(X,L)=0$, any divisor in $|L\otimes M|$ is irreducible and reduced, hence the divisors of $\alpha$ and $\beta$  have no common component.\ It follows that $\tau\in H^0(X,L^2\otimes M)$ vanishes on the divisor of $\alpha$ hence can be written as the product of~$\alpha$ by a section of $L$.\ This implies $\tau=0$, which is absurd.
\end{proof}

\begin{lemm}\label{leW6W10}
Assume there are no rank-$2$ elements in $\Ker(\mu)$.\
For a general $3$-dimensional vector subspace $W_3\subset W_6$, the restriction $\mu\vert_{W_3}\colon W_3\otimes W_{10}\to W_{36}$ of the map $\mu$ has rank $\geq 28$.
\end{lemm}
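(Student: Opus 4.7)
The strategy combines a dimension bound on $\Ker(\mu)$ with a transversality argument. Since $\dim(W_3 \otimes W_{10}) = 30$, showing $\rank(\mu|_{W_3}) \geq 28$ amounts to proving $\dim \Ker(\mu|_{W_3}) \leq 2$ for general $W_3$. I first bound $\dim \Ker(\mu)$: the rank-at-most-$2$ locus in $\P(W_6 \otimes W_{10}) \cong \P^{59}$ is a projective subvariety of dimension $2(6+10-2) - 1 = 27$. By the hypothesis, together with the absence of nonzero rank-$1$ elements in $\Ker(\mu)$ (a consequence of the integrality of $X$, which forbids $\alpha\tau = 0$ with $\alpha,\tau\neq 0$), the linear subspace $\P(\Ker \mu) \subset \P^{59}$ is disjoint from this locus. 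A standard dimension estimate then yields $\dim \P(\Ker \mu) + 27 < 59$, so $\dim \Ker \mu \leq 32$ and $\rank(\mu) \geq 28$.

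Next, I reduce the computation of $\dim \Ker(\mu|_{W_3})$ to a concrete intersection of subspaces of $W_{36}$. Choosing a basis $\alpha_1, \alpha_2, \alpha_3$ of $W_3$ and setting $A_i \coloneqq \alpha_i W_{10} \subset W_{36}$, each $A_i$ has dimension $10$ (by integrality), and by Lemma~\ref{lem:NoRk2} together with the rank-$1$ analysis the $A_i$ are pairwise disjoint, so $\dim(A_1 + A_2) = 20$. A direct computation then gives
\[
\dim \Ker(\mu|_{W_3}) = 30 - \dim(A_1 + A_2 + A_3) = \dim\bigl((A_1+A_2) \cap A_3\bigr),
\]
which equals the dimension of the kernel of the linear map
\[
m_{\alpha_3}\colon W_{10} \longrightarrow W_6 \cdot W_{10}/(A_1+A_2), \qquad \tau \longmapsto \alpha_3 \tau \bmod (A_1+A_2).
\]
Its target has dimension $\rank(\mu) - 20 \geq 8$, so $\dim \Ker(m_{\alpha_3}) \geq \max(0, 30 - \rank(\mu))$, with equality expected for generic $\alpha_3 \in W_6$.

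The main obstacle is to justify that for generic $\alpha_3$, the map $m_{\alpha_3}$ indeed attains its maximal rank $\min(10, \rank(\mu) - 20)$, yielding $\dim \Ker(\mu|_{W_3}) = \max(0, 30 - \rank(\mu)) \leq 2$. This is the generic transversality of the family $\{W_3 \otimes W_{10}\}_{W_3 \in G(3,W_6)}$ with $\Ker(\mu)$ inside $W_6 \otimes W_{10}$, which would give the expected intersection dimension $\max(0, \dim \Ker(\mu) - 30) \leq 2$. I plan to address this by exploiting that $\sum_{\alpha \in W_6} \alpha W_{10} = W_6 \cdot W_{10}$, so the images $m_\alpha(W_{10})$ together span the quotient $W_6 \cdot W_{10}/(A_1 + A_2)$; combined with the linearity of $\alpha \mapsto m_\alpha$ and the upper semi-continuity of $\dim \Ker(m_\alpha)$ as a function of $\alpha$, this should imply that generic $\alpha_3$ achieves the minimum kernel dimension, completing the proof.
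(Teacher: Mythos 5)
Your reduction steps are fine: the bound $\operatorname{rk}(\mu)\geq 28$ for the \emph{full} map via the dimension $2(6+10-2)-1=27$ of the rank-$\leq 2$ determinantal locus in $\P(W_6\otimes W_{10})\cong\P^{59}$ (using that $\Ker(\mu)$ meets it only in rank-$1$ or rank-$2$ tensors, both excluded) is correct and is a nice observation not in the paper, and the identification $\dim\Ker(\mu\vert_{W_3})=\dim\bigl((A_1+A_2)\cap A_3\bigr)=\dim\Ker(m_{\alpha_3})$ is also correct. But the step you yourself flag as ``the main obstacle'' is a genuine gap, and your proposed fix does not close it. Upper semi-continuity of $\alpha\mapsto\dim\Ker(m_\alpha)$ only says that the generic value is the \emph{minimum} over $\alpha\in W_6$; nothing forces that minimum to be $\leq 2$. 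Likewise, the fact that the images $m_\alpha(W_{10})$ jointly span the quotient puts no lower bound on the rank of any single $m_\alpha$: a linear family of maps can consist entirely of low-rank maps whose images together span the target. Equivalently, in the incidence formulation, the family $\{W_3\otimes W_{10}\}_{W_3\in\Gr(3,W_6)}$ is only $9$-dimensional, so there is no automatic transversality with the fixed linear space $\Ker(\mu)$; the expected dimension $\max(0,\dim\Ker(\mu)-30)\leq 2$ can be exceeded, and indeed the rank-$\leq 3$ locus of $\P^{59}$ has dimension $38$, so its intersection with the $\leq 31$-dimensional $\P(\Ker\mu)$ could a priori have dimension $\geq 11$, which is exactly the scenario you must exclude. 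If the lemma failed, every $m_{\alpha_3}$ would have kernel of dimension $\geq 3$ and your argument produces no contradiction.

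This is precisely where the paper's proof does its real work, and by a quite different mechanism: it assumes the generic kernel has dimension $\geq 3$, so that the incidence variety $R_3\subset\P(\cS_3\otimes W_{10})$ over $\Gr(3,W_6)$ (whose fiber over $[W_3]$ is $\P(\Ker(\mu\vert_{W_3}))$) has dimension $\geq 11$, and then derives a contradiction globally: the absence of rank-$\leq 2$ elements in $\Ker(\mu)$ makes the tautological map $W_{10}^\vee\otimes\cO_{R_3}\to H\otimes\pi^*\cS_3$ surjective with locally free kernel of rank $7$, so $s_i(H\otimes\pi^*\cS_3)=0$ for $i>7$; combined with $s_j(\cS_3)=0$ for $j\geq 4$, the resulting four independent linear relations among $H^8s_3(\pi^*\cS_3),\,H^9s_2(\pi^*\cS_3),\,H^{10}s_1(\pi^*\cS_3),\,H^{11}$ force $H^{11}=0$ on an $11$-dimensional projective variety, contradicting ampleness of $H$. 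To complete your approach you would need an argument of comparable strength (some excess-intersection or positivity input); pointwise semi-continuity plus spanning cannot substitute for it.
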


\begin{proof}
Let $\cS_3\to \Gr(3,W_6)$ be the rank-$3$ tautological subbundle.\
The natural sheaf inclusion $\cS_3\to  W_6\otimes \cO_{\Gr(3,W_6)}$ induces a morphism $f\colon \P(\cS_3\otimes W_{10})\to\P(W_6\otimes W_{10})$ which makes $\P(\cS_3\otimes W_{10})$ a smooth birational model of the set of elements of rank at most $ 3$ in $\P(W_6\otimes W_{10})$.\
We set
\[
R_3\coloneqq f^{-1}(\P(\Ker(\mu))).
\]
Since, by assumption, there are no rank-$2$ elements in $\Ker(\mu)$, the scheme $R_3$ is isomorphic to the set of elements of rank $\leq 3$ in $\P(\Ker(\mu))$.

The fiber of the natural morphism $\pi\colon R_3\to\Gr(3,W_6)$ over a point $[W_3]$ is the space $\P(\Ker(\mu\vert_{W_3}))$.\
Arguing by contradiction, if the conclusion of the lemma does not hold, the fiber of $\pi$ has dimension $\geq 2$, hence $\dim (R_3)\geq 11$.

Let $H$ be  the restriction to $R_3$ of the line bundle $\cO_{\P(\cS_3\otimes W_{10})}(1)$.\
We have a tautological inclusion $H^{-1}\to \pi^* \cS_3\otimes W_{10}$
\begin{equation}\label{eqmorphdual}
W_{10}^\vee\otimes \cO_{R_3}\lra  H\otimes \pi^*\cS_3.
\end{equation}
Since there are no rank-$2$ elements in $\Ker (\mu)$,  the morphism~\eqref{eqmorphdual}  is surjective, with kernel $\cK $ a locally free sheaf  of rank $7$.\
Thus we have $c_i(\cK)=0$ for $i>7$, that is, $s_i(H\otimes \pi^*\cS_3)=0$, for all $i>7$, where the $s_i$ denote the Segre classes.

We now deduce a contradiction.\
For any line bundle $H$ on a variety and any vector bundle~$\cE$ of rank $3$, we have the relation
\begin{eqnarray}\label{eqrel}
s_i(\cE\otimes H)=\sum_{j=0}^i (-1)^j \binom{i+2}{j} H^js_{i-j}(\cE).
\end{eqnarray}
In our case, one has $s_j(\cS_3)=0$ for $j\geq 4$, because of the exact sequence
\[
0\to   \cS_3\to  W_6\otimes \cO_{\Gr(3,W_6)}\to  \cQ_3\to  0
\]
on the Grassmannian, where $\cQ_3$ is locally free of rank $3$.\
The relations \eqref{eqrel}  thus give four linear relations involving  $s_0(\pi^* \cS_3),\ldots, s_3( \pi^* \cS_3)$, namely
\begin{align*}\label{eqrelexpl}
  0&=s_8(\pi^* \cS_3\otimes H  )\nonumber\\
  &= \binom{10}{8}H^8s_0(\pi^* \cS_3)-\binom{10}{7}H^7s_1(\pi^* \cS_3)+\binom{10}{6}H^6 s_2(\pi^* \cS_3)-\binom{10}{5}H^5s_3(\pi^* \cS_3),\\
  \nonumber
 0&=s_9(\pi^* \cS_3\otimes H  )\\
 &= -  \binom{11}{9}H^9s_0(\pi^* \cS_3)+\binom{11}{8}H^8s_1(\pi^* \cS_3)-\binom{11}{7}H^7s_2(\pi^* \cS_3)+\binom{11}{6}H^6s_3(\pi^* \cS_3), \\
  \nonumber
 0&=s_{10} (\pi^* \cS_3\otimes H  )\\
 &=\binom{12}{10}H^{10}s_0(\pi^* \cS_3)-\binom{12}{9}H^9s_1(\pi^* \cS_3)+\binom{12}{8}H^8s_2(\pi^* \cS_3)-\binom{12}{7}H^7s_3(\pi^* \cS_3), \\
  \nonumber
 0&=s_{11} (\pi^* \cS_3\otimes H  )\\
 &=-\binom{13}{11}H^{11}s_0(\pi^* \cS_3)+\binom{13}{10}H^{10}s_1(\pi^* \cS_3)-\binom{13}{9}H^9s_2(\pi^* \cS_3)+\binom{13}{8}H^8s_3(\pi^* \cS_3).
\end{align*}

We can assume $\dim(R_3)=11$, replacing it by a proper algebraic subset if necessary.\
Multiplying these equations by   adequate powers of $H$, we get the linear relations
\begin{equation*} 
\begin{split}
& 0=H^3s_8(\pi^* \cS_3\otimes H),\qquad 0=H^2s_9(\pi^* \cS_3\otimes H  ),\\
& 0=H s_{10} (\pi^* \cS_3\otimes H  ),\qquad 0=s_{11} (\pi^* \cS_3\otimes H),
\end{split}
\end{equation*}
which, by expanding as above,  give four linear relations between the four intersection numbers
\begin{equation*} \label{eqnumbers}
H^8s_3(\pi^* \cS_3),\ H^9s_2(\pi^* \cS_3),\ H^{10}s_1(\pi^* \cS_3), \ H^{11}
\end{equation*}
on $R_3$.\
Since these four  linear relations are clearly independent, we conclude that these four intersections numbers vanish, which contradicts the fact that $H$ is ample on $R_3$.
\end{proof}

Lemma~\ref{lem:NoRk2} and Lemma~\ref{leW6W10} together imply the following result.

\begin{coro}\label{lealternative}
Assume $H^0(X,L)=0$.\ For a general $3$-dimensional vector subspace $W_3\subset W_6$, the image $\mu(W_3\ot  W_{10})$ has dimension $\geq 28$.
\end{coro}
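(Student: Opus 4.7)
The proof is a direct assembly of the two preceding lemmas, so there is no substantive obstacle here. The plan is as follows. First I would apply Lemma~\ref{lem:NoRk2}: under the hypothesis $H^0(X,L)=0$, any rank-2 element of $\Ker(\mu)$ would yield a relation $\alpha\sigma=\beta\tau$ in $H^0(X, L^{\otimes 3}\otimes M^{\otimes 2})$ with $\alpha,\beta\in W_6$ linearly independent, and the irreducibility of divisors in $|L\otimes M|$ (which follows from $H^0(X,L)=0$) forces $\tau$ to be a multiple of $\alpha$ by a section of $L$, contradicting $H^0(X,L)=0$. This verifies the hypothesis ``no rank-2 elements in $\Ker(\mu)$'' needed for Lemma~\ref{leW6W10}.

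With that hypothesis in hand, Lemma~\ref{leW6W10} gives, for a general $3$-dimensional subspace $W_3\subset W_6$, that the rank of $\mu|_{W_3}\colon W_3\otimes W_{10}\to W_{36}$ is at least $28$. Since the image $\mu(W_3\otimes W_{10})\subset W_{36}$ is the image of this linear map, its dimension coincides with the rank and is therefore $\geq 28$, which is exactly the claim. The entire conceptual content has been deferred to the two lemmas, the harder of which (Lemma~\ref{leW6W10}) already performed the Segre-class computation on the incidence variety $R_3\subset \P(\cS_3\otimes W_{10})$ parametrizing rank-$\leq 3$ elements of $\P(\Ker(\mu))$, using precisely the absence of rank-$2$ elements to ensure that $R_3$ is cut out with the expected codimension.
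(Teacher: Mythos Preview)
Your proposal is correct and matches the paper's approach exactly: the corollary is stated as an immediate consequence of Lemma~\ref{lem:NoRk2} and Lemma~\ref{leW6W10}, and the paper gives no further argument. Your one-line deduction that the dimension of the image equals the rank of $\mu\vert_{W_3}$ is all that is needed.
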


\begin{proof}[Proof of Proposition~\ref{prop6.3precise}.]
We choose a general $3$-dimensional vector subspace $W_3\subset W_6$.\
The locus
defined by the vanishing of the sections in $W_3$ has a mobile part $C_{W_3}\subset X$ which, by Bertini's theorem, is an irreducible curve
which   dominates the  plane curve $C_{0,W_3}$ via $\phi_{L\otimes M}$.\ Consider the restriction maps
\[
r^{p,q}\colon H^0(X,L^{\otp p}\otimes M^{\otp q})\lra  H^0(C_{W_3},(L^{\otp p}\otimes M^{\otp q})\vert_{C_{W_3}})
\]
for $p,q>0$.\
Let us set $W_{p,q}'\coloneqq \Im(r^{p,q})$.\
We will estimate the dimension of $W_{p,q}'$, for $(p,q)\in\{(1,1),(2,1), (3,2)\}$.\
First of all, we note that $W_{1,1}'$ has dimension~3.

We then claim that $W_{3,2}'$ has dimension $\leq 8$.
Indeed, we have the inclusion
\[
\mu(W_3\ot W_{10})\subset \Ker (r^{3,2})
\]
and, by Corollary~\ref{lealternative},   the space on the left has dimension at least $28$, while $h^0(X,L^{\otp 3}\otimes M^{\otp 2})=36$.

Finally, we claim that
\begin{enumerate}[{\rm (a)}]
\item\label{enum:claim1_part1} either the space $W_{2,1}'$ has dimension $\geq 5$,
\item\label{enum:claim1_part2} or it has dimension $4$ and the image $\phi_{L^{\otp 2}\otimes M}( C_{W_3})$ is a rational normal cubic curve in~$\P^3$.
\end{enumerate}
Indeed, since $\phi_{L\otimes M}$ is a dominant rational map to a hypersurface $Y\subset \P^5$, the curve $C_{W_3}$ can be chosen to pass through a  general triple of points $x,y,z\in X$.\
Assume  $\phi_{L^{\otp 2}\otimes M}( C_{W_3})$ spans at most a $\P^3$ in $\P^9=\P(W_{10}^\vee)$.\
This $\P^3$ then contains the projective plane  spanned by the images of $x$, $y$,  $z$, and thus this plane intersects the curve $\phi_{L^{\otp 2}\otimes M}(C_{W_3})$ in at least a fourth point, unless   $\phi_{L^{\otp 2}\otimes M}(C_{W_3})$ is a rational normal curve of degree $3$.\

In the former case, we conclude that the variety $Y'\coloneqq\phi_{L^{\otp 2}\otimes M}(X)$, which spans $\P^9$, has the property that a general trisecant plane of $Y'$ is $4$-secant.\
This is absurd, since a general $1$-dimensional linear section of $Y'$ spans at least a $\P^6$.\

In the latter case, the curve $\phi_{L^{\otp 2}\otimes M}(C_{W_3})$ has  degree $\leq 3$.\
This curve cannot be a plane curve, since otherwise the projection of $Y'$ through any of its points $y$ would contain a line through any two points $x$, $z$, hence would be a projective space of dimension at most $4$.\
Hence it must be a rational normal curve in $\P^3$, as claimed.

Let us now consider the multiplication map
\[
W_{1,1}'\otimes W_{2,1}'\lra  W_{3,2}'.
\]
In case~\ref{enum:claim1_part1}, the three spaces have respective dimensions $3$, at least $5$, and at most $8$.\
Since the curve $C_{W_3}$ is irreducible, we can apply Lemma \ref{lepourconcprop72} below: it says that   this is possible only if the linear system $W_{1,1}'$ factors through an elliptic plane  curve  or  a rational curve.

In case~\ref{enum:claim1_part2}, we have $W_{2,1}'=\Sym^3(W_2'')$, for some $2$-dimensional linear system $W_2''$ on~$C_{W_3}$.\
We now consider the multiplication maps
\begin{align*}
W_{1,1}'\otimes W_2''&\lra  W_{k_1}'',\\
W_{k_1}''\otimes W_2''&\lra  W_{k_2}'',\\
W_{k_2}''\otimes W_2''&\lra  W_{3,2}',
\end{align*}
where $W_{k_1}''$ and $W_{k_2}''$ are spaces of sections of adequate line bundles on~$C_{W_3}$, of respective dimensions $k_1$ and $k_2$.\
The Hopf lemma (\cite[p.~108]{acgh}) gives
\begin{align*}
\dim (W_{k_1}'')&\geq \dim (W_{1,1}')+1,\\
\dim (W_{k_2}'')&\geq \dim (W_{k_1}'')+1,\\
\dim (W_{3,2}')&\geq \dim (W_{k_2}'')+1.
\end{align*}
As $\dim (W_{1,1}')=3$ and $\dim (W_{3,2}')\leq 8$, the three inequalities above cannot all be strict, hence one of them must be an equality.\
It is well known that this implies  that the plane curve $C_{0,W_3}=\phi_{L\otimes M}(C_{W_3}) $ is rational, thus completing the proof of Proposition \ref{prop6.3precise}.
\end{proof}

We used  above the following lemma, for which we could not find a reference.

\begin{lemm}\label{lepourconcprop72}
Let $C$ be a smooth connected projective curve and let $H$ and $H'$ be  line bundles on $C$.\ Let
$W_3\subset H^0(C,H)$ and $W_k\subset H^0(C,H')$ be  base-point-free linear systems on $C$, of respective linear dimensions $3$ and $k$, with $k\geq 4$.\
Assume that the rank of the multiplication map
$$\mu\colon W_3\otimes W_k\lra H^0(C,H\otimes H')$$
is at most $3+k$.\ Then, both linear systems factor through a morphism $C\to C_0$, where $C_0$ is either a  rational curve or a degree-$3$ elliptic curve in $\P(W_3)$.
\end{lemm}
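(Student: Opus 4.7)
The plan is to apply the classical Hopf lemma (Castelnuovo--Hopf inequality) for base-point-free linear systems on an irreducible curve: since $W_3$ and $W_k$ are base-point-free on $C$, one has $\rk(\mu) \geq \dim W_3 + \dim W_k - 1 = k+2$. Combined with the hypothesis $\rk(\mu) \leq k+3$, this forces $\rk(\mu) \in \{k+2,\,k+3\}$, and the analysis splits into these two boundary cases.

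In the case $\rk(\mu) = k+2$, the equality case of Hopf's lemma produces a common morphism $\nu\colon C \to \P^1$ such that $W_3 = \nu^* W_3'$ and $W_k = \nu^* W_k'$ for suitable linear systems on $\P^1$. Then $\phi_{W_3}$ factors as $C \xrightarrow{\nu} \P^1 \xrightarrow{\phi_{W_3'}} \P^2$, and its image $C_0 = \phi_{W_3'}(\P^1) \subset \P^2$ is a rational plane curve; both $W_3$ and $W_k$ factor through $C \to C_0$.

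The case $\rk(\mu) = k+3$ is the main content. The key step is the base-point-free pencil trick applied to a generic pencil $V_2 \subset W_3$: the Koszul sequence $0 \to H^{-1} \to V_2 \otimes \mathcal{O}_C \to H \to 0$ tensored by $H'$ identifies the kernel of $V_2 \otimes H^0(H') \to H^0(H \otimes H')$ with $H^0(H^{-1} \otimes H')$, so the kernel of $\mu|_{V_2 \otimes W_k}$ injects into $H^0(H^{-1} \otimes H')$. Hence $\dim(V_2 \cdot W_k) \geq 2k - h^0(H^{-1} \otimes H')$, and since $V_2 \cdot W_k \subset W_3 \cdot W_k$ has dimension at most $k+3$, we obtain $h^0(H^{-1} \otimes H') \geq k - 3$. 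After Stein factorization of the combined map $(\phi_{W_3}, \phi_{W_k})\colon C \to \P^2 \times \P^{k-1}$ (through which both systems descend, reducing to the case where $\phi_{W_3}\colon \tilde C \to C_0 \subset \P^2$ is birational), Riemann--Roch in the favorable nonspecial regime gives $\deg H = g + 2$ (from $h^0(H) = 3$) and $h^0(H^{-1} \otimes H') = k - g - 2$, so the inequality forces $g(\tilde C) \leq 1$. Since $g = 0$ falls back into the first case, we conclude $g(\tilde C) = 1$ and $\deg H = 3$; as every degree-$3$ line bundle on an elliptic curve is very ample, $\phi_{W_3}$ embeds $\tilde C$ as a smooth plane cubic $C_0$, the required degree-$3$ elliptic curve in $\P(W_3)$.

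The main obstacle lies in the reduction to the nonspecial complete setting invoked above: a priori $W_3$ may be a proper subspace of $H^0(H)$ and the line bundles $H$, $H'$, or $H^{-1}\otimes H'$ may be special. For $k$ sufficiently large, Clifford's inequality automatically forces $H^{-1} \otimes H'$ to be nonspecial (any special line bundle of degree $k-3$ on a curve of genus $\geq 2$ would violate the bound $h^0 \geq k - 3$ by Clifford), so the numerical heart of the argument---the clean inequality $\dim(V_2 \cdot W_k) \geq k + g + 2$---goes through and forces $g \leq 1$. The remaining small-$k$ cases ($k = 4, 5$) must be handled by a more explicit analysis of $\ker \mu$, in particular of its rank-$2$ stratum that parametrizes common $\P^1$ factorizations of pencils.
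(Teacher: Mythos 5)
Your opening moves (the Hopf bound forcing $\rk(\mu)\in\{k+2,k+3\}$, the equality case sending everything to a $\P^1$, the base-point-free pencil trick in the remaining case) are sound and overlap with tools the paper also uses, but the heart of your rank-$(k+3)$ case has a genuine gap. First, the reduction is not valid: Stein-factorizing the combined map $(\phi_{W_3},\phi_{W_k})\colon C\to\P(W_3)\times\P(W_k)$ only lets you assume that this \emph{combined} map is birational onto its image; it does not make $\phi_{W_3}\colon \tilde C\to C_0$ birational (the second factor may separate points the first does not, e.g.\ $\phi_{W_3}$ of degree $2$ onto a conic while $\phi_{W_k}$ is birational), and this is precisely the delicate configuration one must confront. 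Second, and more seriously, your Riemann--Roch count requires $W_3=H^0(C,H)$, $W_k=H^0(C,H')$ and $H$, $H'$, $H^{-1}\ot H'$ nonspecial; none of this is in the hypotheses and nothing in your argument establishes it. Without an upper bound on $\deg H'$ (i.e.\ without completeness of $W_k$), the inequality $h^0(H^{-1}\ot H')\geq k-3$ coming from the pencil trick yields no contradiction, since $h^0(H^{-1}\ot H')$ can be arbitrarily large; the appeal to Clifford is circular, because it presumes $\deg(H^{-1}\ot H')=k-3$, which is exactly what the unproved completeness/nonspeciality would give. Finally, the cases $k=4,5$ are explicitly deferred, so the argument is incomplete even on its own terms.

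For comparison, the paper's proof never assumes completeness or nonspeciality. It first proves two claims: if some length-$2$ subscheme imposes one condition on $W_3$ and two on $W_k$ (or vice versa), the pencil equality case of the Hopf lemma forces both systems to be pulled back from a morphism $C\to\P^1$, giving the rational case. In the remaining case both systems descend to $C_0=\phi_3(C)$; assuming the normalization $C_0'$ is not rational, one cuts $W_k$ by $k-3$ general points down to a $3$-dimensional $W_3'$, shows the kernel $K$ of $W_3\ot W_3'\to H^0(C_0',H_0\ot H_0')$ is $3$-dimensional, and reads $K$ as a morphism of rank-$3$ bundles on $\P(W_3)$ whose rank drops along $C_0$. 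The rank-$\le 2$ locus is a surface in $\P(W_3)\times\P(W_3')$ which an intersection-theoretic computation identifies with the graph of an isomorphism $\P(W_3)\cong\P(W_3')$, forcing $H_0\cong H_0'(-x_1-\cdots-x_{k-3})$ and hence the rationality of $C_0'$ --- a contradiction unless $\deg C_0\le 3$, which is the plane cubic case. Your sketch replaces this geometric core by a numerical shortcut that does not close.
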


\begin{proof} Denote by $\phi_3\colon C\to \P^2$   the morphism  induced by $W_3$ and by
$\phi_k\colon C\to \P^{k-1}$ the morphism  induced by $W_k$.\   We first claim that the result is true if  $\phi_{k}$ does not factor through~$\phi_3$.\ Indeed, assume there is a length-$2$ subscheme
$z\subset C$ that imposes only one  condition on $W_3$ and two conditions on $W_k$.\ Then
$z$ imposes two conditions on $\Im(\mu)$, hence the multiplication map
$$\mu_z\colon W_3(-z)\otimes W_k\lra   H^0(C, H\otimes H')$$
has rank at most $k+1$, while $W_3(-z)$ has dimension $2$.\ We are thus in the  equality case of the Hopf lemma  and we conclude that $W_k$ is the pullback to $C$ of $H^0(\P^1,\cO_{\P^1}(k-1))$ via a morphism
$\psi\colon C\to  \P^1$.\ This case is easily concluded by studying  the
multiplication maps
$$W_3\otimes H^0(\P^1,\cO_{\P^1}(k-2))\lra H^0(C,H\otimes \psi^*\cO_{\P^1}(k-2)),$$
 with image $W'$, and
$$W'\otimes H^0(\P^1,\cO_{\P^1}(1))\lra H^0(C,H\otimes \psi^*\cO_{\P^1}(k-1)), $$
of rank $\leq 3+k$.\
By the Hopf lemma applied to both maps,   $\dim (W')\in\{k+1,k+2\}$.\ Thus, one the two maps  satisfies  the equality in the Hopf lemma, hence    $\phi_3$ also factors through $\psi$.\ This proves the  claim.

We now prove a similar claim with $\phi_3$ and $\phi_{k}$ permuted.\ Assume  there is a length-$2$ subscheme $z\subset C$ that imposes only one condition on $W_k$ but two conditions on $W_3$.\ This produces a $(k-1)$-dimensional vector subspace $W_{k}(-z)\subset W_k$ of sections vanishing on $z$, with the property that the multiplication map
$$\mu_z\colon W_3\otimes W_{k}(-z)\lra H^0(X,H\otimes  H'(-z))$$
has rank $\leq k+1$, which is the minimum allowed by the Hopf Lemma.\ We conclude that there is a morphism $\psi:C\rightarrow \P^1$ such that $W_3=\psi^*H^0(\P^1,\cO_{\P^1}(2))$.\
We now consider the   multiplication maps
$$\mu'_1\colon  W_2\otimes  W_k\lra H^0(C, H'\otimes \psi^* \cO_{\P^1}(1)),$$
with image $W'$, and
$$ \mu'_2\colon W_2\otimes  W'\lra H^0(C, H'\otimes \psi^* \cO_{\P^1}(2))=H^0(C,H\otimes H').$$
We know that  $\mu'_2$ has rank at most $3+k$, so either $\dim (W')=k+1$ and $\mu'_1$ satisfies  the equality case of  the Hopf lemma, or
$\dim (W')=k+2$ and  $\mu'_2$ satisfies  the equality case of  the Hopf lemma.\ In both cases, we conclude that both linear systems factor through $\psi$.

Using these two claims, we can now assume that both linear systems factor through  the curve $C_0\coloneqq\phi_3(C)\subset \P(W_3)$,  that $C_0$ is birationally isomorphic to its image in $\P(W_k)$, and that  the normalization  $C'_0$ of $C_0$ is not rational, as otherwise the lemma is proved.\ We  denote by~$H_0$ and $H'_0$ the line bundles on  $C'_0$  whose respective pullbacks to $C$ are $H$ and $H'$.\      For    general points $x_1,\ldots,x_{k-3}\in C'_0$, we have a
$3$-dimensional space $W'_3
\subset W_k$ of sections vanishing at $x_1,\ldots,x_{k-3}$, and the multiplication map
$$\mu'\colon  W_3\otimes W'_3\lra H^0(C'_0, H_0\otimes H'_0)$$
has rank $\leq 6$ since its image is contained in $ \Im (\mu)$ and vanishes at $x_1,\dots,x_{k-3}$.
We can thus assume that the kernel
\begin{equation}
\label{eqKsubset}
K\coloneqq  \Ker (\mu')\subset  W_3\otimes W'_3
\end{equation}
has dimension $3$ (if it has dimension $4$,   we are in the equality case of the Hopf lemma, so we can ignore this case).\
The inclusion \eqref{eqKsubset} induces a morphism
\begin{equation}
\label{eqmorphKW}
K\otimes \cO_{\P(  W_3)}\lra  W'_3\otimes  \cO_{\P(  W_3)}(1)
\end{equation}
of rank-$3$ vector bundles
on $\P(W_3)$.\ This morphism  has rank at most $ 2$ along $C_0$ and   must have rank at least $2$ generically on $\P(W_3)$ since otherwise, the image would be  a rank-$1$ sheaf with at least three independent sections contained in $W'_3\otimes  \cO_{\P(  W_3)}(1)$, hence a copy of  $\cO_{\P(  W_3)}(1)$ and the elements of~$K$ would be rank-$1$ tensors in $W_3\otimes W'_3$.\   Since we assumed that the curve $C'_0$ is not rational, no quadratic equation vanishes on it, hence  the morphism \eqref{eqmorphKW} also has generic rank $2$ along the curve $C_0$.

If the curve  $C_0$ has degree at most $ 3$,  the lemma is proved.\ Otherwise, the morphism~\eqref{eqmorphKW}  has   rank at most $ 2$ everywhere on $\P(  W_3)$ and is generically of rank $2$ along $C_0$, which means that the three polynomials of type $(1,1)$ on $\P( W_3)\times \P(  W'_3)$ given by $K$ vanish on  a surface $\Sigma$ which contains the natural  embedding of $C'_0$ in $\P( W_3)\times \P(  W'_3)$ and is birationally isomorphic to  $\P(  W_3)$ by the first projection.\ The surface $\Sigma$  is  also birationally isomorphic  to  $\P( W'_3)$ by the second projection since its image in $\P( W'_3)$ contains the image of $C'_0$ in $\P( W'_3)$ which, being birationally isomorphic to $C'_0$ for a generic choice of $x_1\ldots,x_{k-3}$, is also not rational.\

We claim that the surface $\Sigma\subset \P(  W_3)\times \P( W'_3)$ is  the graph of an isomorphism $\P(  W_3)\isom \P( W'_3)$.\ Indeed, as $\Sigma $ is  contained in three hypersurfaces of type $(1,1)$, it is an irreducible component of the complete intersection of two such hypersurfaces, but it is not the complete intersection of two such hypersurfaces (since there is a third equation of type $(1,1)$ vanishing on it).\  It follows that the class of $\Sigma$ is of the form $(h_1+h_2)^2-e=h_1^2+2h_1h_2+h_2^2-e$, where  $h_i\coloneqq c_1({\rm pr}_i^*\cO_{\P^2}(1))$ and the class $e$  is effective and nonzero on $\P(  W_3)\times \P( W'_3)$.\ Since the   projections ${\rm pr}_1$ and ${\rm pr}_2$ are dominant and $(\Sigma\cdot h_1\cdot h_2)^2\ge (\Sigma\cdot h_1^2)(\Sigma\cdot h_2^2)$ (Hodge Index Theorem), the only possibility is   $[\Sigma]=h_1^2+h_1h_2+h_2^2$, that is, $[\Sigma]$ is the class of the graph of an isomorphism $\P(  W_3)\cong \P( W'_3)$.\ This implies that $\Sigma$ itself is the graph of an isomorphism since we then have $\Sigma^*\cO_{\P( W'_3)}(1)= \cO_{\P( W_3)}(1)$.\ This proves the claim.\
The claim   implies that the
line bundles  $H_0$ and $H'_0(-x_1-\dots-x_{k-3})$ on $C'_0$ coincide.\ As $x_1\ldots,x_{k-3}$ are general points of   $C'_0$ and $k\geq 4$, this implies that $C'_0$ is rational, which is a contradiction.
\end{proof}

\subsection{Proof of Proposition~\ref{prop:CaseC1}}\label{proof}
If $H^0(X,L) $ and $H^0(X,M) $ are both nonzero, by Lemma~\ref{lecasholhoM} and Lemma~\ref{lem:H0large}, after possibly permuting $L$ and $M$, the line bundle $L$ is globally generated and thus gives a Lagrangian fibration $f\colon X\to \P^2$ with $f^*\cO_{\P^2}(1)\isom L$.\ So we are in case \ref{prop64a} of Proposition~\ref{prop:CaseC1}.

If either $H^0(X,L) $ or $H^0(X,M) $ is zero, the image of the rational map~$\phi_{L\otimes M}$ is rationally connected by Proposition~\ref{prop:CaseB'2} and we are in case \ref{prop64b} of Proposition~\ref{prop:CaseC1}.\


\section{The divisorial contraction case}\label{sec:DivisorialContraction}

Let $(X,\lll,\mm)$ be a very general triple satisfying~\eqref{hypo}.\
By Proposition~\ref{prop:SurfacesOnHK4}, we have  $\NS(X)=\Z\lll\oplus\Z\mm$.\
We assume in this section that we are in case~\ref{caseC2}: the class $\lll$ is nef, while the class~$\mm$ is isotropic and not nef.\ Thus, we have  $\Mov(X)=\R_{\ge 0}\lll+\R_{\ge 0}(\lll+\mm)$ and there is a divisorial contraction  $c\colon X\to Y$ defined by  some positive power of the semi-ample line bundle $L\otimes M$, whose exceptional locus is the prime divisor $E$ with class $-\lll+\mm$.\
Our aim is to prove Proposition~\ref{prop:CaseC2}: some tensor power of $L$ defines a Lagrangian fibration on $X$.

The fourfold $Y$ is Gorenstein with trivial canonical sheaf, rational singularities, and singular locus the surface $\Sigma\coloneqq c(E)$.\
The class in $H_2(X,\Z)\isom H^2(X ,\Z)^\vee$ of a general fiber of $E\to \Sigma$ is given by the linear form $q_X(-\lll+\mm,\bullet)$ which, since   $q_X(-\lll+\mm,\lll)=1$, is nondivisible.\
The class of any curve contracted by $c$ is a multiple of that class, hence all $1$-dimensional fibers of $c$ are irreducible, smooth rational curves.

\begin{prop}\label{prop:NoFibersDimension2}
The fibers of the contraction $c\colon X\to Y$ all have dimension at most~$ 1$, the varieties $E$ and $\Sigma$ are smooth, and the restriction $c_E:=c\vert_E\colon E\to \Sigma$ is a $\P^1$-fibration.
\end{prop}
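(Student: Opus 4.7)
The plan is to combine the semi-smallness of projective symplectic resolutions with Kaledin's theory of symplectic leaves, using the numerical data already established in Section~\ref{subsec:ExceptionalDivisor}: the divisor $E$ is irreducible and reduced with $q_X([E])=-2$, the general fiber $C_0$ of $c_E\colon E\to\Sigma$ is a smooth rational curve whose class is primitive in $H_2(X,\Z)$ (corresponding via $q_X$ to the linear form $q_X(-\lll+\mm,\bullet)$), and the class of every curve contracted by $c$ is a positive integer multiple of $[C_0]$.

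For the fiber-dimension bound, I would invoke the semi-smallness of projective symplectic resolutions of smooth holomorphic symplectic varieties (Wierzba, Kaledin, Namikawa): over any stratum of $Y$ of codimension $2k$, each fiber of $c$ has dimension at most $k$. The morphism $c$ is an isomorphism over $Y\setminus\Sigma$ by construction, and $\Sigma=c(E)$ has codimension exactly $2$ in $Y$ (because $\dim E=3$ and the general fiber of $c_E$ is 1-dimensional), so every fiber of $c$ has dimension at most $1$ and every fiber of $c_E$ has dimension exactly $1$. For the smoothness of $\Sigma$, I would use that $Y$ is a projective variety with symplectic singularities in the sense of Beauville (the symplectic form of $X$ descends to the smooth locus of $Y$ and extends reflexively to all of $Y$), so by Kaledin's structure theorem $Y$ is stratified by smooth symplectic leaves, and the transverse singularity of $Y$ along a codimension-$2$ leaf is of Du~Val type. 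Since a general fiber of $c_E$ is a single smooth $\P^1$ of primitive class, the generic transverse type is $A_1$; the primitivity of $[C_0]$ prevents any upgrade of this type to a higher $A_n$ at isolated points of $\Sigma$ (which would force a reducible fiber of class $\geq 2[C_0]$). Hence $\Sigma$ is a single symplectic leaf and is smooth.

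Finally, once $\Sigma$ is smooth, every set-theoretic fiber of $c_E$ is a 1-cycle of class in $\Z_{>0}[C_0]$; the primitivity of $[C_0]$, combined with the fact that each fiber must have the same numerical class as the generic one, forces each fiber to be a single reduced irreducible rational curve, hence a smooth $\P^1$ by the argument at the end of Section~\ref{subsec:ExceptionalDivisor}. As $E$ is a Cartier divisor in the smooth fourfold $X$ it is Cohen--Macaulay, so miracle flatness, applied to the morphism $c_E$ from a Cohen--Macaulay source with equidimensional $1$-dimensional fibers onto the smooth surface $\Sigma$, yields that $c_E$ is flat. A flat proper morphism of relative dimension $1$ from a Cohen--Macaulay source to a smooth base with all fibers smooth $\P^1$'s is smooth, so $E$ is smooth and $c_E$ is the asserted $\P^1$-fibration.

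The main obstacle I expect is the smoothness of $\Sigma$: one must exclude zero-dimensional symplectic leaves inside $\Sigma$, i.e., confirm that the transverse Du~Val type of $Y$ along $\Sigma$ remains $A_1$ everywhere. This is controlled by the primitivity of $[C_0]$ together with the structural restrictions provided by Theorem~\ref{th43a} (Fujiki constant $c_X=3$ and the explicit description of $\mathrm{Hdg}^4(X,\Q)$ from Proposition~\ref{prop:SurfacesOnHK4}); once that is in place, the bound on fiber dimensions and the $\P^1$-fibration structure follow routinely from the semi-smallness and miracle-flatness packages.
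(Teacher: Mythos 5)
Your reduction of the fiber-dimension bound to semi-smallness does not work, and this is precisely where the real content of the proposition lies. Semi-smallness of a symplectic resolution only says that the locus of points with fiber dimension $\geq k$ has codimension $\geq 2k$; in the fourfold $Y$ this still allows a $2$-dimensional fiber over finitely many points of $\Sigma$ (a $0$-dimensional stratum has codimension $4\geq 2\cdot 2$), so the statement ``$\Sigma$ has codimension $2$, hence every fiber has dimension at most $1$'' is a non sequitur. Ruling out a surface $S\subset E$ contracted to a point is exactly what the paper's proof is devoted to, and it is not formal: it uses Kawamata--Viehweg vanishing and the Riemann--Roch computations from~\eqref{hypoprime} to produce a member of $|L^{\otp 2}\otimes M^{\otp 2}|$ containing $S$ but not $E$, hence the effectivity of the residual class $[S']=2(\lll+\mm)(-\lll+\mm)-[S]$; then the description $\Hdg^2(X,\Q)=\Sym^2\NS(X)_\Q\oplus\Q q_X^\vee$ for the very general triple (Proposition~\ref{prop:SurfacesOnHK4}), integrality of $[S]^2$, and positivity of $\int_X[S]\omega^2$ and $\int_X[S']\omega^2$ against isotropic boundary classes $\omega$ of the K\"ahler cone, combined with the Fujiki relation~\eqref{eq:Fujiki3}, force $25w=t$ with $t\in\{1,\dots,4\}$ and $5w\in\Z$, a contradiction. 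Your proposal offers no substitute for this arithmetic/Hodge-theoretic exclusion.

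The second weak point is the smoothness of $\Sigma$. Your claim that primitivity of $[C_0]$ ``prevents any upgrade of the transverse type to a higher $A_n$ at isolated points'' is not justified: primitivity constrains only the class of the \emph{generic} fiber, while over special points of $\Sigma$ the fiber class may be any positive multiple of $[C_0]$ (as noted just before the proposition), so chains of several rational curves, or non-reduced fibers, over isolated points are not excluded by this reasoning; excluding a deeper ($0$-dimensional) symplectic leaf is again genuine work. In the paper, once the fiber-dimension bound is established, the smoothness of $E$ and $\Sigma$ and the $\P^1$-fibration structure are simply quoted from Wierzba's results on contractions of symplectic fourfolds (\cite[Theorem~1.3(ii)]{wie}), whose hypothesis is exactly that all fibers have dimension at most $1$. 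Your concluding miracle-flatness argument is fine in itself, but it presupposes both the dimension bound and the smoothness of $\Sigma$, which are the two points your proposal leaves open.
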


\begin{proof}
We argue by contradiction and assume that there is an integral surface $S\subset X$ such that $c(S)$ is a point in $Y$.\ Then $S\subset E$.\ We first claim that the class
\begin{equation}\label{eq:S'}
[S'] \coloneqq 2(\lll+\mm)(-\lll+\mm) - [S]
\end{equation}
in ${\Hdg}^4(X,\Z)$ is effective.

Indeed, by assumption, the line bundles $L^2\otimes M^2$ and $L^3\otimes M$ are big and nef on $X$.\ By the Kawamata--Viehweg vanishing theorem and~\eqref{hypoprime}, we get
\[
h^0(X, L^2\otimes M^2)=21\quad\text{ and }\quad h^0(X, L^3\otimes M)=15.
\]
It follows that the restriction map
\[
H^0(X,L^2\otimes M^2)\rightarrow H^0(E,(L^2\otimes M^2)\vert_E),
\]
whose kernel equals $H^0(X,L^3\otimes M)$, has rank at least~$6$.\
On the other hand, as $L\otimes M$ is numerically trivial on $S$, the restriction map
\[
H^0(X,L^2\otimes M^2)\rightarrow H^0(S,(L^2\otimes M^2)\vert_S)
\]
has rank at most~$1$.\
It follows that  there exists a section of $L^2\otimes M^2$ vanishing on $S$ which does not vanish identically on $E$.\
Since the class of $E$ is $-\lll+\mm$, the claim follows.

We now compute the  intersection matrices $M_{[S]}$ and $M_{[S']}$ defined in~\eqref{eqmatriceO}.\ They are nonzero since there is an ample class  in $\Z\lll\op\Z\mm$.\
Since the surface $S$ is contracted by $c$, the line bundle $L\otimes M$ is numerically trivial on $S$ and we get
\[
M_{[S]}=
\begin{pmatrix}
t & -t \\
-t & t
\end{pmatrix}
\]
for some positive integer $t$.\
Hence, using~\eqref{eq:S'}, we get
\[
M_{[S']}=
\begin{pmatrix}
4-t & t \\
t & -4-t
\end{pmatrix}.
\]
Since $[S']$ is effective, we get $4-t\geq0$, hence $t\in\{1,2,3,4\}$.

By Proposition~\ref{prop:SurfacesOnHK4}, we can write
\begin{equation}\label{eq:Paris20211103}
\begin{split}
    &[S] = \frac t2\, \Big( \lll^2 -\lll\mm + \mm^2 \Big) +   w\,\Big(q_X^\vee - \frac{25}{2} \lll\mm \Big), \\
    &[S'] = -\Big(2+ \frac t2 \Big)\lll^2 + \frac t2 \lll\mm + \Big(2 - \frac t2 \Big)\mm^2 -   w\,\Big(q_X^\vee - \frac{25}{2} \lll\mm \Big),
\end{split}
\end{equation}
for some $w\in\Q$.\
Since $[S]$ is an integral class, we have $[S]^2\in\Z$.\
Using~\eqref{qxc}, we get
\[
2[S]^2 =   t^2 + 525\, w^2=t^2 + 3\cdot  7\, (5w)^2,
\]
so that $5w\in\Z$.

From~\eqref{eq:Paris20211103}, by the same reasoning used at the end of the proof of Lemma~\ref{lecasholhoM}, for any class $\omega\in \overline{\cK}_X\subset H^{1,1}(X,\R)$ (in our situation this means in particular that $q_X(\omega,\lll)\ge0$ and $q_X(\omega,-\lll+\mm)\geq0$) with $q_X(\omega)=0$, we obtain
\begin{equation}\label{eq:Paris20211103_second}
\begin{split}
    & 0\leq \int_X [S]\omega^2 = \frac t2 \int_X\lll^2\omega^2- \Big(\frac {t+25w}2\Big)\int_X \lll\mm\omega^2 + \frac t2\int_X \mm^2\omega^2,  \\
    & 0\leq \int_X [S']\omega^2 = - \Big(2+\frac t2\Big)\int_X \lll^2\omega^2
     + \Big(\frac {t+25w}2\Big)\int_X \lll\mm\omega^2
     +\Big(2-\frac t2\Big)\int_X \mm^2\omega^2 .
\end{split}
\end{equation}
Moreover, since $q_X(\omega)=0$ and $c_X=3$,   the Fujiki relation~\eqref{eq:Fujiki3} implies, for all $\alpha,\beta\in H^2(X,\Z)$,
\[
\int_X \alpha\beta\omega^2 = 2 q_X(\alpha,\omega)\, q_X(\beta,\omega).
\]
Thus, from~\eqref{eq:Paris20211103_second}, we deduce
\[
\frac {t+25w}2  \leq t\quad\text{ and }\quad \frac {t+25w}2  \geq t,
\]
and thus
\[
25 w = t, \qquad \text{ with } t\in\{1,2,3,4\}\text{ and }5w\in\Z,
\]
which is impossible.\ This proves that $c$ contracts no surfaces.

The other statements of the lemma then follow from
\cite[Theorem~1.3(ii)]{wie}.
\end{proof}

Since the line bundle $L$ has intersection~1 with all fibers of $c_E$, we immediately deduce the following result.

\begin{lemm}\label{lem:EisP1bundle}
Let $\cE\coloneqq c_{E\,*}L$.
We have an isomorphism $E\cong\P_{\Sigma}(\cE^\vee)$ over $\Sigma$.
\end{lemm}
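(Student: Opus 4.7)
The plan is to construct the isomorphism explicitly from the tautological data $(c_E, L\vert_E)$, via the standard universal-property argument for smooth $\P^1$-fibrations carrying a relative degree-1 line bundle.

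First, by Proposition~\ref{prop:NoFibersDimension2}, the restriction $c_E\colon E\to \Sigma$ is a smooth $\P^1$-fibration between smooth varieties.\ Since $L$ has intersection $1$ with every fiber $F$ of $c_E$, we have $L\vert_F\isom \cO_{\P^1}(1)$.\ By cohomology and base change applied to the flat proper morphism $c_E$, we then get $R^1c_{E\,*}L=0$ and $\cE\coloneqq c_{E\,*}L$ is a locally free sheaf of rank $2$ on~$\Sigma$ whose formation commutes with base change, so that $\cE\vert_s\isom H^0(F_s,L\vert_{F_s})$ for every $s\in\Sigma$.

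Next, the adjunction morphism
\[
c_E^*\cE\lra L\vert_E
\]
restricts on each fiber $F_s$ to the standard evaluation map $H^0(\P^1,\cO(1))\otimes \cO_{\P^1}\to \cO_{\P^1}(1)$, which is surjective.\ By Nakayama's lemma the adjunction morphism is therefore surjective on all of~$E$.\ By the universal property of the Grothendieck projectivization $\P_\Sigma(\cE^\vee)$ (in the convention of the paper, the space of lines in~$\cE$, i.e., of rank-$1$ quotients of $\cE$), this surjection defines a morphism
\[
g\colon E\lra \P_\Sigma(\cE^\vee)
\]
of $\Sigma$-schemes such that $g^*\cO_{\P_\Sigma(\cE^\vee)}(1)\isom L\vert_E$.

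Finally, on each fiber, $g$ restricts to the morphism $F_s\isom\P^1\to \P(\cE\vert_s)\isom \P^1$ defined by the complete linear system $|\cO_{\P^1}(1)|$, which is an isomorphism.\ Thus $g$ is a proper $\Sigma$-morphism between two smooth $\P^1$-fibrations over $\Sigma$ that is an isomorphism on every fiber.\ By Zariski's main theorem (or a direct dimension/degree count), $g$ is an isomorphism.\ This step is essentially automatic, so I do not anticipate any serious obstacle; the only points to verify carefully are the base-change statement producing the rank-$2$ bundle $\cE$ and the convention fixing $\P_\Sigma(\cE^\vee)$ as the space of rank-$1$ quotients of $\cE$.
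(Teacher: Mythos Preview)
Your proposal is correct and is precisely the standard argument that the paper takes for granted: the paper's entire proof is the single sentence ``Since the line bundle $L$ has intersection~1 with all fibers of $c_E$, we immediately deduce the following result,'' and what you have written is the routine unpacking of that sentence (base change gives a rank-$2$ bundle, the evaluation map is fiberwise surjective, the universal property yields a $\Sigma$-morphism that is a fiberwise isomorphism). One small remark on conventions: your parenthetical identifying $\P_\Sigma(\cE^\vee)$ with both ``lines in $\cE$'' and ``rank-$1$ quotients of $\cE$'' is a bit tangled, but since $\cE$ has rank~$2$ one has a canonical isomorphism $\P_\Sigma(\cE)\cong\P_\Sigma(\cE^\vee)$ (via $\cE^\vee\cong\cE\otimes\det(\cE)^{-1}$), so no harm is done.
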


By Lemma~\ref{lem:EisP1bundle}, the line bundle $(L\otimes M)\vert_E$ descends to a line bundle $H_{\Sigma}$ on~$\Sigma$, which is ample since some positive power of $L\otimes M$ is the pullback of an ample line bundle on $Y$.\
We will study  in more detail in Section~\ref{sec:OGrady} the surface $\Sigma$, the polarization $H_{\Sigma}$, and the rank-2 vector bundle $\cE$.
For the moment, we just use their existence to finish the proof of Proposition~\ref{prop:CaseC2}.
We need one last result before that.

\begin{lemm}\label{levanh2}
One has $H^2(X, M^{\otp 2})=H^4(X, M^{\otp 2})=0$ and $h^0(X, M^{\otp 2})\ge3$.
\end{lemm}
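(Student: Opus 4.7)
My plan is to establish the three assertions separately, leveraging the effective divisor $E$ (with $\cO_X(E)\cong M\otimes L^{-1}$) and the $\P^1$-bundle structure $\pi=c_E\colon E=\P_\Sigma(\cE^\vee)\to\Sigma$ from Lemma~\ref{lem:EisP1bundle}. First I would compute the Euler characteristic: since $q_X(2\mm)=4q_X(\mm)=0$, Theorem~\ref{th43a} gives $\chi(X,M^{\otimes 2})=P_{RR,X}(0)=\binom{3}{2}=3$. For $H^4(X,M^{\otimes 2})=0$, apply Serre duality: $H^4(X,M^{\otimes 2})\cong H^0(X,M^{\otimes -2})^\vee$, and the class $-2\mm$ lies outside the pseudo-effective cone $\Psef(X)=\R_{\geq 0}\lll+\R_{\geq 0}(-\lll+\mm)$ of case~\ref{caseC2}, so $H^0(X,M^{\otimes -2})=0$.

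The central tool is the short exact sequence obtained by tensoring the ideal sheaf sequence of $E$ with $M^{\otimes 2}$:
\[
0\to L\otimes M\to M^{\otimes 2}\to M^{\otimes 2}|_E\to 0,
\]
where $M^{\otimes 2}\otimes\cO_X(-E)=L\otimes M$. The line bundle $L\otimes M$ is nef in case~\ref{caseC2} and big, since $(L\otimes M)^4=c_Xq_X(\lll+\mm)^2=3\cdot 4=12>0$, so Kawamata--Viehweg vanishing yields $H^i(X,L\otimes M)=0$ for $i>0$ and $h^0(X,L\otimes M)=\chi(X,L\otimes M)=P_{RR,X}(2)=6$. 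The associated long exact sequence then gives $h^0(X,M^{\otimes 2})\geq 6\geq 3$ and isomorphisms $H^i(X,M^{\otimes 2})\cong H^i(E,M^{\otimes 2}|_E)$ for every $i\geq 1$.

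To finish, I would compute $H^2(E,M^{\otimes 2}|_E)$ via the $\P^1$-bundle $\pi$. Since $L|_E=\cO_\pi(1)$ and $(L\otimes M)|_E=\pi^*H_\Sigma$, we have $M^{\otimes 2}|_E=\pi^*H_\Sigma^{\otimes 2}\otimes\cO_\pi(-2)$; hence $\pi_*(M^{\otimes 2}|_E)=0$ and, by relative Serre duality using $\omega_{E/\Sigma}=\pi^*(\det\cE)^{-1}\otimes\cO_\pi(-2)$, $R^1\pi_*(M^{\otimes 2}|_E)=H_\Sigma^{\otimes 2}\otimes\det\cE$. The crucial identification is $\omega_\Sigma\cong H_\Sigma\otimes\det\cE$, obtained by comparing $\omega_E=N_{E/X}=M|_E\otimes L^{-1}|_E=\pi^*H_\Sigma\otimes\cO_\pi(-2)$ (from $K_X=\cO_X$ and adjunction on $X$) with $\omega_E=\pi^*\omega_\Sigma\otimes\omega_{E/\Sigma}$. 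Therefore $R^1\pi_*(M^{\otimes 2}|_E)=\omega_\Sigma\otimes H_\Sigma$, and the Leray spectral sequence together with Kodaira vanishing on the smooth projective surface $\Sigma$ (Proposition~\ref{prop:NoFibersDimension2}, $H_\Sigma$ ample) yields
\[
H^2(X,M^{\otimes 2})=H^1(\Sigma,\omega_\Sigma\otimes H_\Sigma)=0.
\]

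The main obstacle in this plan is the $H^2$ vanishing: the argument hinges on the identification $\omega_\Sigma\cong H_\Sigma\otimes\det\cE$, since it is what converts the a priori mysterious class $H_\Sigma^{\otimes 2}\otimes\det\cE$ on $\Sigma$ into the shape $\omega_\Sigma\otimes H_\Sigma$, to which Kodaira vanishing applies. If one did not yet have the smoothness of $E$ and $\Sigma$ or the descended bundle $H_\Sigma$, this last step would break down.
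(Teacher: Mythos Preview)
Your argument is correct. It is essentially the Serre-dual of the paper's proof, carried out on the other side. The paper first proves $H^2(X,M^{-2})=0$: it tensors the ideal sheaf sequence of $E$ by $(L\otimes M)^{-1}$ to obtain $0\to M^{-2}\to (L\otimes M)^{-1}\to (L\otimes M)^{-1}\vert_E\to 0$, and then observes that $(L\otimes M)^{-1}\vert_E=\pi^*H_\Sigma^{-1}$ is a \emph{pure pullback} from $\Sigma$, so the needed vanishing is simply $H^1(\Sigma,H_\Sigma^{-1})=0$. This avoids any $R^1\pi_*$ computation and the identification $\omega_\Sigma\cong H_\Sigma\otimes\det\cE$; the price you pay for working directly with $M^{\otimes 2}$ is the relative $\cO_\pi(-2)$ twist, which forces exactly those extra steps. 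Note that the two final vanishings are literally Serre-dual on $\Sigma$: $H^1(\Sigma,H_\Sigma^{-1})^\vee\cong H^1(\Sigma,\omega_\Sigma\otimes H_\Sigma)$. As a small bonus, your exact sequence also yields $h^0(X,M^{\otimes 2})\ge h^0(X,L\otimes M)=6$, which the paper only remarks after its proof.
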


\begin{proof}
We first prove $H^2(X, M^{-2})=0$.\
Consider the exact sequence
\begin{equation*}
0\to \cO_X(-E)\to \cO_X\to \cO_E\to 0.
\end{equation*}
Tensoring by $(L\otimes M)^{\otp -1}$, we obtain
\begin{equation*}
0\to  M^{-2}\to (L\otimes M)^{\otp -1}\to (L\otimes M)^{\otp -1}\vert_E\to 0.
\end{equation*}
Since $H^2(X,(L\otimes M)^{\otp -1})=H^2(X,L\otimes M)=0$, it  suffices to prove  $H^1(E,(L\otimes M)^{\otp -1}\vert_E)=0$.\
But, by Lemma~\ref{lem:EisP1bundle}, we have
\[
H^1(E,(L\otimes M)^{-1}\vert_E)\isom H^1(\Sigma, H_{\Sigma}^{-1})=0,
\]
since $H_{\Sigma}$ is ample on the smooth surface $\Sigma$, as we wanted.

By Serre duality, we get $H^2(X,M^{\otp 2})=0$.\
We also have $H^4(X,M^{\otp 2})=H^0(X,M^{\otp -2})=0$.

Finally, the Riemann--Roch theorem takes the form
\begin{equation}\label{eq:Paris20211103_third}
h^0(X,M^{\otp  2})+h^2(X,M^{\otp  2})+h^4(X,M^{\otp2})\geq \chi(X,M^{\otp  2})=P_{RR,X}(0)=3,
\end{equation}
completing the proof of the lemma.
\end{proof}

In fact, the divisor $E$ is fixed in $|M^{\otp 2}|$, hence $h^0(X,M^{\otp  2})= h^0(X,L\otimes M)=6$, but we will use again the Riemann--Roch argument~\eqref{eq:Paris20211103_third} in the proof below.

\begin{proof}[Proof of Proposition~\ref{prop:CaseC2}.]
As observed in Section~\ref{subsec:Lagrangian4folds}, we only have to show  $h^0(X,L^{\otp 2})\geq 2$.\
This inequality follows from Lemma \ref{levanh2} by deformation and specialization: as we explained in Section~\ref{subsec:ExceptionalDivisor}, the reflection that permutes $\lll$ and $\mm$ is a monodromy operator.\
This means that one can deform the pair $(X,L)$ into the pair $(X,M)$ through a family $(\cX,\cL)\to T$.\
By semi-continuity, Lemma~\ref{levanh2} implies $h^2(\cX_t, \cL_t^{\otp 2})=h^4(\cX_t, \cL_t^{\otp 2})=0$ for $t\in T$ general.\
The Riemann--Roch argument of~\eqref{eq:Paris20211103_third} then implies $h^0(\cX_t, \cL_t^{\otp 2})\geq 3$ for $t\in T$ general.\
By upper semi-continuity again, we conclude $h^0(X,L^{\otp 2})\geq 3$, as we wanted.
\end{proof}


\section{Proofs of the main theorems}\label{sec:OGrady}

In this section, we prove Theorem~\ref{thm:MainThm} and Theorem~\ref{thm:SYZ}.\
Let $X$ be a  \hk\ fourfold.
We assume that there are classes $\lll,\mm\in H^2(X,\Z)$ with $\int_X \lll^4=0$ and $\int_X\lll^2\mm^2=2$.

The sublattice spanned by $\lll$ and $\mm$ is indefinite, hence its $q_X$-orthogonal has signature $(2,b_2(X)-4)$.\ 
By the surjectivity of the period map (\cite{HuyHK}), we can then deform $X$ and assume that  $(X,\lll,\mm)$ is a very general triple satisfying~\eqref{hypo} (hence also~(\ref{hypo}$^\prime$)).\
After possibly permuting~$\lll$ and $\mm$, we can further assume that $\lll$ is
nef (see~\eqref{cones}).\
By Corollary~\ref{cor:NoTwoIsotropicNef}, the class $\mm$ cannot be nef, and we are in the situation of Proposition~\ref{prop:CaseC2}.\
Let us denote by $f\colon X \to \P^2$ the Lagrangian fibration such that $f^*\cO_{\P^2}(1)\isom L^{\otp k_L}$, for some positive integer $k_L$.\
We start with the following vanishing result.

\begin{lemm}\label{lem:vanishing}
Under the above assumptions, let $p,q\in\Z$, with $q>0$.\
For all $i\geq 3$, we have
\[
H^i(X,L^{\otp p}\otimes M^{\otp q})=0.
\]
\end{lemm}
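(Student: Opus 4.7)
The plan is to use the Lagrangian fibration $f\colon X\to\P^2$ from Proposition~\ref{prop:CaseC2}, for which $f^*\cO_{\P^2}(1)\isom L^{\otp k_L}$, to reduce the desired vanishing to Kawamata--Viehweg vanishing after a sufficient twist by a pullback from $\P^2$, and then to propagate the vanishing back via Serre's theorem and the Leray spectral sequence.

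First, I would verify that $L^{\otp(p+sk_L)}\otimes M^{\otp q}$ is nef and big on $X$ for $s\gg 0$. The description~\eqref{cones} of the nef cone in case~\ref{caseC2} (with $t_0=1$) says that $a\lll+b\mm$ is nef if and only if $a\geq b\geq 0$; since $b=q>0$, this holds once $p+sk_L\geq q$, which is true for $s$ sufficiently large. Bigness follows from the Fujiki relation together with $q_X(\lll)=q_X(\mm)=0$ and $q_X(\lll,\mm)=1$:
\[
\int_X \bigl((p+sk_L)\lll+q\mm\bigr)^4 = 3\, q_X\bigl((p+sk_L)\lll+q\mm\bigr)^2 = 12(p+sk_L)^2q^2>0.
\]
Since $K_X\isom\cO_X$, Kawamata--Viehweg vanishing then yields
\[
H^i(X,L^{\otp(p+sk_L)}\otimes M^{\otp q})=0 \quad\text{for all }i>0\text{ and }s\gg 0.
\]

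Next, the projection formula gives $L^{\otp(p+sk_L)}\otimes M^{\otp q}\isom (L^{\otp p}\otimes M^{\otp q})\otimes f^*\cO_{\P^2}(s)$, and the Leray spectral sequence reads
\[
E_2^{a,b}=H^a\bigl(\P^2, R^bf_*(L^{\otp p}\otimes M^{\otp q})\otimes\cO_{\P^2}(s)\bigr)\Rightarrow H^{a+b}(X,L^{\otp(p+sk_L)}\otimes M^{\otp q}).
\]
Serre vanishing kills $E_2^{a,b}$ for $a>0$ and $s\gg 0$, so the spectral sequence degenerates, giving
\[
H^i(X,L^{\otp(p+sk_L)}\otimes M^{\otp q}) \isom H^0\bigl(\P^2, R^if_*(L^{\otp p}\otimes M^{\otp q})\otimes\cO_{\P^2}(s)\bigr).
\]
Combined with the vanishing from the first step, this forces $H^0\bigl(\P^2, R^if_*(L^{\otp p}\otimes M^{\otp q})\otimes\cO_{\P^2}(s)\bigr)=0$ for every $i>0$ and all $s\gg 0$. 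Since a nonzero coherent sheaf on $\P^2$ acquires sections after enough Serre twists, we conclude that $R^if_*(L^{\otp p}\otimes M^{\otp q})=0$ for every $i>0$.

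Finally, applying Leray once more to $L^{\otp p}\otimes M^{\otp q}$ itself, the vanishing of its higher direct images collapses the spectral sequence and gives $H^i(X,L^{\otp p}\otimes M^{\otp q})\isom H^i(\P^2,f_*(L^{\otp p}\otimes M^{\otp q}))$, which vanishes for $i\geq 3$ because $\dim\P^2=2$. The entire argument is a routine bootstrap; no serious obstacle arises, and the only computation of substance is the nef-and-big check above, which is immediate from~\eqref{cones} and the Fujiki relation.
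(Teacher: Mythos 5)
Your proof is correct, and its skeleton is the same as the paper's: both start from the decomposition $L^{\otp p}\otimes M^{\otp q}=\bigl(L^{\otp p+rk_L}\otimes M^{\otp q}\bigr)\otimes f^*\cO_{\P^2}(-r)$ with the first factor big and nef (your explicit check of nefness via the cone description~\eqref{cones} in case~\ref{caseC2}, and of bigness via the Fujiki relation, is exactly what the paper leaves implicit), and both finish by pushing forward to $\P^2$ and using $\dim\P^2=2$ to kill $H^{\ge3}$. The difference is in the middle step: the paper simply invokes the relative vanishing theorem \cite[Theorem~10.32]{kol} to conclude at once that $R^jf_*(L^{\otp p}\otimes M^{\otp q})=0$ for $j>0$ (and that $f_*$ is locally free, though only the vanishing is needed), whereas you re-derive the statement $R^{j>0}f_*=0$ by hand from absolute Kawamata--Viehweg vanishing for the nef and big twists $L^{\otp(p+sk_L)}\otimes M^{\otp q}$, Serre vanishing on $\P^2$, the projection formula, and the collapse of the Leray spectral sequence for $s\gg0$. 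Your route is more elementary and self-contained (it only uses absolute vanishing plus standard facts about twists on $\P^2$), at the cost of a longer bootstrap; the paper's route is a one-line citation of a relative vanishing theorem. Either way the conclusion $H^i(X,L^{\otp p}\otimes M^{\otp q})\isom H^i(\P^2,f_*(L^{\otp p}\otimes M^{\otp q}))=0$ for $i\ge3$ follows, so there is no gap.
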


\begin{proof}
Choose $r\in\Z$ be such that $p+rk_L \geq q$.\
We can write
\[
L^{\otp p}\otimes M^{\otp q} =  L^{\otp p+rk_L}  \otimes L^{-rk_L} \ot M^{\otp q}=  L^{\otp p+rk_L} \otimes M^{\otp q}\otimes f^*\cO_{\P^2}(-r),
\]
where $ L^{\otp p+rk_L}\ot M^{\otp q}$ is big and nef.\
By \cite[Theorem~10.32]{kol} again, we obtain that $R^j f_* (L^{\otp p}\otimes M^{\otp q})$ vanishes for all $j>0$ and $f_* (L^{\otp p}\otimes M^{\otp q})$ is a vector bundle on~$\P^2$.\ 
In particular, this gives us what we need:
\[
H^i(X, L^{\otp p}\otimes M^{\otp q}) = H^i(\P^2, f_* (L^{\otp p}\otimes M^{\otp q})) = 0,
\]
for all $i\geq3$.
\end{proof}

We now study in more detail the surface $\Sigma$ and the vector bundle $\cE$.\
Recall that we have $(L\otimes M)\vert_E = c_E^* H_{\Sigma}$.\
Let us denote by $\hh\in \NS(\Sigma)$ the  class of $H_{\Sigma}$.

\begin{lemm}\label{lem:K3}
The pair $(\Sigma,\hh)$ is a polarized K3 surface of degree~2 with $\NS(\Sigma)=\Z \hh$.
\end{lemm}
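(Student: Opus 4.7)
The plan is to verify in turn each of the three claims: $\Sigma$ is a K3 surface, $\hh^2=2$, and $\NS(\Sigma)=\Z\hh$.

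For the K3 structure, first compute $\chi(\cO_\Sigma)$. From the exact sequence
\[
0\to \cO_X(-E)\to \cO_X\to \cO_E\to 0,
\]
Theorem~\ref{thm:RRpoly} gives $\chi(X,\cO_X(-E))=P_{RR,X}(q_X(\lll-\mm))=P_{RR,X}(-2)=1$, while $\chi(X,\cO_X)=3$, so $\chi(E,\cO_E)=2$. Since $c_E$ is a $\P^1$-bundle, $\chi(\cO_\Sigma)=\chi(\cO_E)=2$. Next, by the structure theorem for crepant divisorial contractions of hyper-K\"ahler fourfolds with smooth one-dimensional fibers (Wierzba, applicable by Proposition~\ref{prop:NoFibersDimension2}), the singularities of $Y$ along $\Sigma$ are transverse $A_1$, so the holomorphic symplectic form $\sigma_X$ descends to a nowhere-vanishing 2-form on $\Sigma$. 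Then $K_\Sigma\cong \cO_\Sigma$, and combined with $\chi(\cO_\Sigma)=2$, the Enriques--Kodaira classification of smooth compact K\"ahler surfaces with trivial canonical bundle forces $\Sigma$ to be a K3 surface.

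For the degree, the relation $(L\otimes M)|_E=c_E^*H_\Sigma$ and the $\P^1$-bundle projection formula (with $L|_E$ the relative hyperplane, so $c_{E,*}\,c_1(L|_E)=1$) give
\[
\int_\Sigma \hh^2 \;=\; \int_E (c_E^*\hh)^2\cdot c_1(L|_E) \;=\; \int_X (-\lll+\mm)\cdot(\lll+\mm)^2\cdot\lll.
\]
Applying the polarized Fujiki relation~\eqref{eq:Fujiki3} with $c_X=3$ and the values $q_X(\lll)=q_X(\mm)=0$, $q_X(\lll,\mm)=1$ yields~$2$.

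For the Picard rank, the construction $(X,\lll,\mm)\mapsto(\Sigma,\hh)$ extends in families to a morphism from the irreducible 18-dimensional moduli space of marked hyper-K\"ahler fourfolds satisfying~\eqref{hypo} to the moduli of polarized K3 surfaces of degree~2. Specializing to Example~\ref{ex:K3n} with $n=2$ and $d=1$: for a very general K3 surface $S$ of degree~2, the fourfold $X=M_S(0,\hh_S,0)$ has $E$ realized as the universal theta divisor, which is the universal curve over $|\hh_S|$ and thus fibers as a $\P^1$-bundle over $S$ itself; hence $\Sigma\cong S$ and $\hh=\hh_S$, so $\NS(\Sigma)=\Z\hh_S$ in this case. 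By upper semi-continuity of the Picard rank on the irreducible moduli, the equality $\NS(\Sigma)=\Z\hh$ persists at very general points, in particular for our~$X$.

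The main obstacle is the symplectic descent in the first step: rigorously identifying the singularities of $Y$ along $\Sigma$ as transverse $A_1$ relies on Wierzba's classification of four-dimensional symplectic contractions. An alternative route stays entirely inside intersection theory on $E$: adjunction gives $K_E=E|_E=c_E^*\hh-2\xi$ (with $\xi=c_1(L|_E)$), while the relative canonical formula for the $\P^1$-bundle $c_E\colon E\cong\P_\Sigma(\cE^\vee)\to\Sigma$ gives $K_E=c_E^*K_\Sigma+c_E^*c_1(\cE)-2\xi$, so $K_\Sigma=\hh-c_1(\cE)$; one would then need to compute $c_1(\cE)$ independently, for example via $c_{E,*}L=\cE$ and Grothendieck--Riemann--Roch.
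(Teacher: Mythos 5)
Your first two steps are essentially the paper's: the K3 property comes from Wierzba's structure result for symplectic contractions with at most one-dimensional fibers (giving a symplectic, hence $K$-trivial, surface $\Sigma$) together with $\chi(\cO_\Sigma)=\chi(\cO_E)=\chi(X,\cO_X)-\chi(X,\cO_X(-E))=3-1=2$, and the degree is the same Fujiki computation $\hh^2=\int_X(\lll+\mm)^2(-\lll+\mm)\lll=2$. Those parts are fine.

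The Picard-rank step, however, has a genuine gap: it is circular. Your specialization argument presupposes that the very general triple $(X,\lll,\mm)$ lies in the same irreducible family as the example $\M_0(S)$ of Example~\ref{ex:K3n}, i.e.\ that $X$ is deformation equivalent to a manifold of $\mathrm{K3}^{[2]}$ type. But that is exactly Theorem~\ref{thm:MainThm}, whose proof goes \emph{through} this lemma (via Lemma~\ref{lem:bundle}, Lemma~\ref{lem:k=1} and Proposition~\ref{thm:OGrady}, which only at the end identifies $X$ with $\M_0(\Sigma)$). The ``irreducible 18-dimensional family'' of Section~\ref{subsec:HKNefCone} is the deformation family of the given $X$; knowing that $X$ has the Betti, Hodge and Chern numbers, Fujiki constant and Riemann--Roch polynomial of $S^{[2]}$ does not place it in the same connected component of the moduli space of marked \hk\ manifolds as $\M_0(S)$, so you may not specialize to that example. (There is also a secondary issue: the assignment $(X,\lll,\mm)\mapsto(\Sigma,\hh)$ is only constructed under case~\ref{caseC2} for very general members, and you would need to justify that it extends to a family over a suitable base before invoking semicontinuity of the Picard rank.) The paper avoids all of this by an intrinsic Hodge-theoretic argument: for $\alpha$ in the rank-21 transcendental lattice $H^2(X,\Z)_{\rm tr}$, the Fujiki relation gives $\int_X\alpha(-\lll+\mm)(\lll+\mm)^2=0$, so $\alpha\vert_E=c_E^*\nn_\alpha$ with $\nn_\alpha\in\hh^\perp\subset H^2(\Sigma,\Z)$, and $\alpha\mapsto\nn_\alpha$ is an isometric, injective morphism of Hodge structures (injectivity from the irreducibility of $H^2(X)_{\rm tr}$); hence the transcendental part of $H^2(\Sigma)$ has rank $21$ and $\rho(\Sigma)=1$, giving $\NS(\Sigma)=\Z\hh$ since $\hh^2=2$ forces $\hh$ primitive. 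Your proof needs this (or some other non-circular) argument for the last assertion.
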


\begin{proof}
By~\cite[Theorem~1.4]{wie}, we know that $\Sigma$ is a symplectic surface.\
To show that it is a K3 surface, it is enough to compute $\chi(\Sigma,\cO_\Sigma)=\chi(E,\cO_E)$.\
By the Riemann--Roch Theorem, we have
\[
\chi(E,\cO_E) = \chi(X,\cO_X)-\chi(X,\cO_X(-E))=3-1=2,
\]
as we wanted.\
Also,
\[
\hh^2 = \int_X (\lll+\mm)^2 (-\lll+\mm) \lll = 2.
\]
Thus, we are left to show that the N\'eron--Severi group of $\Sigma$ has rank one.

Let us consider the transcendental lattice $H^2(X,\Z)_{\rm tr}\coloneqq \NS(X)^\perp\subset H^2(X,\Z)$, which under our assumptions has rank~21.
For all $\alpha\in H^2(X,\Z)_{\rm tr}$,  we have, by~\eqref{eq:Fujiki3},
\[
\int_X \alpha(-\lll+\mm)(\lll+\mm)^2 = 0.
\]
Hence, we can write
\[
\alpha\vert_E = c_E^*\nn_{\alpha}
\]
for a unique class $\nn_{\alpha}\in \hh^\bot \subset H^2(\Sigma,\Z)$.
Moreover, again by~\eqref{eq:Fujiki3}, for all $\alpha,\beta\in H^2(X,\Z)_{\rm tr}$, we have
\[
q_X(\alpha,\beta) = \int_X \alpha\beta\lll(-\lll+\mm) = \int_{\Sigma}\nn_{\alpha}\nn_{\beta}.
\]
Therefore, the morphism
\[
\vartheta\colon H^2(X,\Z)_{\rm tr} \lra \hh^\bot \subset H^2(\Sigma,\Z), \qquad \alpha\longmapsto \nn_{\alpha}
\]
gives an isometry $\vartheta_{\C}\colon H^2(X,\Z)_{\rm tr}\otimes\C \isomto \hh^\perp\otimes\C$.

The morphism $\vartheta$ is a nonzero morphism of Hodge structures.\ Since the Hodge structure~$H^2(X)_{\rm tr}$
is irreducible,~$\vartheta$ is injective  hence $\Sigma$ has Picard number one.
 \end{proof}

\begin{lemm}\label{lem:bundle}
The vector bundle $\cE$ is the unique spherical stable bundle on $\Sigma$ with Mukai vector~$(2,H_\Sigma,1)$.
\end{lemm}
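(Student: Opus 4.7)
The plan is to compute the Mukai vector of $\cE$, verify that $\cE$ is $\hh$-stable, and invoke Mukai's uniqueness theorem for spherical stable sheaves on K3 surfaces.

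First I would compute the Chern data of $\cE$. Since $L$ has degree $1$ on each $\P^1$-fiber of $c_E$, cohomology-and-base-change shows that $\cE = (c_E)_*L$ is locally free of rank $2$ and $L\vert_E \cong \cO_{\P(\cE^\vee)}(1)$. Setting $\xi = c_1(L\vert_E)$ and $\pi = c_E$, the projective-bundle relation $\xi^2 = \pi^*c_1(\cE)\cdot\xi - \pi^*c_2(\cE)$, together with the intersection numbers
\[
\int_E \xi^2\cdot c_E^*\hh = \int_X \lll^2(\lll+\mm)(-\lll+\mm) = 2,\qquad \int_E \xi^3 = \int_X \lll^3(-\lll+\mm) = 0
\]
(using $\int_X \lll^2\mm^2 = 2$, $\int_X \lll^4 = 0$, and $\int_X \lll^3\mm = 0$ from the polarized Fujiki relation), gives $c_1(\cE)\cdot\hh = 2$ and $c_1(\cE)^2 = c_2(\cE)$. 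Since $\NS(\Sigma) = \Z\hh$ by Lemma~\ref{lem:K3}, this forces $c_1(\cE) = \hh$ and $c_2(\cE) = 2$, so $v(\cE) = (2,\hh,1)$ and $v(\cE)^2 = -2$; in particular $\cE$ is spherical.

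Next I would verify $\hh$-stability. The slope is $\mu_\hh(\cE) = 1$, and since $\Pic(\Sigma) = \Z\hh$, any saturated destabilizing sub-line-bundle of $\cE$ would be isomorphic to $H_\Sigma^{\otimes k}$ for some $k\geq 1$; composing with a nonzero section of $H_\Sigma^{\otimes(k-1)}$ produces a nonzero morphism $H_\Sigma\hookrightarrow\cE$, hence a nonzero section of $\cE\otimes H_\Sigma^{-1}$. By the projection formula this reduces to showing $H^0(E,M^{-1}\vert_E)=0$, which I would extract from the Koszul-type exact sequence $0\to L\otimes M^{-2}\to M^{-1}\to M^{-1}\vert_E\to 0$ on $X$ as follows: neither $-\mm$ nor $\lll-2\mm$ lies in the pseudo-effective cone $\R_{\geq0}\lll+\R_{\geq0}(-\lll+\mm)$, so $H^0(X,M^{-1}) = H^0(X,L\otimes M^{-2}) = 0$; and Serre duality combined with Lemma~\ref{lem:vanishing} applied to $M$ and $L^{-1}\otimes M^2$ gives $H^1(X,M^{-1})=H^1(X,L\otimes M^{-2})=0$. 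The long exact sequence then yields the desired vanishing.

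Finally, uniqueness will follow from Mukai's theorem on moduli of sheaves on K3 surfaces: since $v = (2,\hh,1)$ is primitive with $v^2 = -2$, the moduli space of $\hh$-stable sheaves on $\Sigma$ with Mukai vector $v$ is either empty or a single reduced point, and $\cE$ lies in it. The main obstacle is the stability step: the Mukai-vector computation is a routine projective-bundle calculation and uniqueness is classical, but propagating the vanishing from $X$ down to the $3$-fold $E$ requires careful bookkeeping of classes relative to the pseudo-effective cone and the correct application of Serre duality to the inputs of Lemma~\ref{lem:vanishing}, any slip in which would invalidate the argument.
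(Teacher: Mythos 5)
Your proof is correct, and its second half (stability and uniqueness) is essentially the paper's argument: you reduce stability to $H^0(\Sigma,\cE\otimes H_\Sigma^{-1})=H^0(E,M^{-1}\vert_E)=0$, use the same sequence $0\to L\otimes M^{-2}\to M^{-1}\to M^{-1}\vert_E\to 0$, kill $H^0(X,M^{-1})$ by the shape of the pseudo-effective cone and $H^1(X,L\otimes M^{-2})\cong H^3(X,L^{-1}\otimes M^{\otimes 2})^\vee$ by Lemma~\ref{lem:vanishing}, and then quote Mukai's rigidity for a primitive vector with square $-2$ (which the paper leaves implicit). Where you genuinely diverge is the computation of the Mukai vector: the paper writes $v(\cE)=(2,sH_\Sigma,s')$ using $\Pic(\Sigma)=\Z\hh$ and pins down $s,s'$ by computing $\chi(\Sigma,\cE)$ and $\chi(\Sigma,\cE\otimes H_\Sigma^{-1})$ through the sequences $0\to L(-E)\to L\to L\vert_E\to 0$ and $0\to M^{-1}(-E)\to M^{-1}\to M^{-1}\vert_E\to 0$, together with the Riemann--Roch polynomial (which gives $\chi(X,L^{\otimes 2}\otimes M^{-1})=\chi(X,L\otimes M^{-2})=0$, $\chi(X,L)=\chi(X,M^{-1})=3$); you instead read off $c_1(\cE)\cdot\hh=2$ and $c_1(\cE)^2=c_2(\cE)$ from the projective-bundle relation and the intersection numbers $\int_X\lll^2(\lll+\mm)(-\lll+\mm)=2$ and $\int_X\lll^3(-\lll+\mm)=0$, which only uses the Fujiki relation~\eqref{eq:Fujiki3} and Lemma~\ref{lem:K3}, not $P_{RR,X}$. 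Both computations are valid (your sign convention $\xi^2=\pi^*c_1(\cE)\xi-\pi^*c_2(\cE)$ is the right one for $E\cong\P_\Sigma(\cE^\vee)$ with $L\vert_E$ the tautological quotient); your version is slightly more self-contained at this point, while the paper's recycles the Euler-characteristic bookkeeping it needs elsewhere. Two of the vanishings you list, $H^0(X,L\otimes M^{-2})=0$ and $H^1(X,M^{-1})=0$, are not actually needed for the long exact sequence argument, but including them is harmless.
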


\begin{proof}
By Lemma~\ref{lem:K3}, the Picard group of $\Sigma$ is generated by $H_\Sigma$.\
Hence, we can write  the Mukai vector of $\cE$ as $v(\cE)=(2,s H_\Sigma, s')$, with $s,s'\in\Z$.\
By the Riemann--Roch Theorem, since $[E]=-\lll+\mm$, we have
\[
\chi(X,L (-E))=\chi(X,L^{\otp 2}\otimes M^{-1})=0.
\]
Hence, from the exact sequence
\begin{equation}\label{eq:Paris20211029}
0\to L (-E) \to L \to L \vert_E \to 0
\end{equation}
we get
\[
s'+2 = \chi(\Sigma,\cE) = \chi(E,L \vert_E) = \chi(X,L) - \chi(X,L (-E)) = \chi(X,L) = 3,
\]
and thus $s'=1$.
To compute $s$, we proceed similarly and use the exact sequence
\begin{equation}\label{eq:Paris20211028}
0\to M^{-1}  (-E) \to M^{-1} \to M^{-1} \vert_E \to 0.
\end{equation}
Again, since $\cO_X(-E)=L \otimes M^{-1}$, we get
\[
M^{-1} (-E) = L\otimes M^{-2}.
\]
Hence
\begin{equation}\label{eq:bundle}
\begin{split}
5 - 2s & = \chi(\Sigma,\cE \otimes H_\Sigma^{-1}) = \chi(E,M^{-1} \vert_E) \\
& = \chi(X,M^{-1}) - \chi(X,L\otimes M^{-2}) = \chi(X,M^{-1}) = 3,
\end{split}
\end{equation}
and thus $s=1$.

To finish the proof, we only need to prove that $\cE$ is stable; indeed, it is then automatically spherical, since its Mukai vector has square  $-2$.
Since $\cE$ is a rank-2 vector bundle of slope $(H_\Sigma\cdot c_1(\cE))/(H_\Sigma^2)\rk(\cE)=1/2$ and $\Sigma$ has Picard number 1, it is enough to prove   \mbox{$H^0(\Sigma,\cE\otimes H_\Sigma^{-1})=0$.}\
This is a slight refinement of~\eqref{eq:bundle} above.\
Indeed,
\[
H^0(\Sigma,\cE\otimes H_\Sigma^{-1})=H^0(E,(L\otimes (L\otimes M)^{-1})\vert_E)=H^0(E,M^{-1} \vert_E).
\]
From~\eqref{eq:Paris20211028}, we obtain an exact sequence
\[
H^0(X,M^{-1}) \to H^0(E,M^{-1} \vert_E) \to H^1(X,L\otimes M^{-2}).
\]
Under our assumptions, $\mm$ is contained in the closure of the positive cone and therefore $-\mm$ cannot be effective; hence, $H^0(X,M^{-1})=0$.\
Therefore, we only have to show  $H^1(X,L\otimes M^{-2})=0$ or, by Serre duality,  $H^3(X,  L^{-1}\ot M^{\otp 2})=0$.
This follows immediately from Lemma~\ref{lem:vanishing}.
\end{proof}

Next, we use Lemma~\ref{lem:bundle} to show $k_L=1$ and study the induced map $f_E\coloneqq f \vert_E \colon E\to \P^2$.

\begin{lemm}\label{lem:k=1}
The restriction morphism
\[
H^0(X,L)\lra H^0(E, L \vert_E) \isom \C^3
\]
is an isomorphism.
In particular, $k_L=1$ and $L$ is globally generated.
\end{lemm}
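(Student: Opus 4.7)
The natural starting point is the ideal sheaf exact sequence for $E$, twisted by $L$. Since $[E] = -\lll + \mm$ we have $\cO_X(-E) = L \otimes M^{-1}$, so the sequence reads
\[
0 \to L^{\otp 2} \otimes M^{-1} \to L \to L\vert_E \to 0.
\]
My plan is to show that both flanking cohomology groups $H^0(X, L^{\otp 2} \otimes M^{-1})$ and $H^1(X, L^{\otp 2} \otimes M^{-1})$ vanish, producing an isomorphism $H^0(X, L) \isomto H^0(E, L\vert_E)$, and then to bound $h^0(E, L\vert_E)$ from below by $3$ so that Lemma~\ref{lem:H0large} forces us into case~(b).

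For the $H^0$ vanishing I would invoke the description of the pseudo-effective cone in case~\ref{caseC2}, namely $\Psef(X) = \R_{\geq 0}\lll + \R_{\geq 0}(-\lll + \mm)$. In the basis $(\lll,\, -\lll+\mm)$ the class $2\lll - \mm$ has coordinates $(1,-1)$, hence is not pseudo-effective, so $H^0(X, L^{\otp 2} \otimes M^{-1}) = 0$. For the $H^1$ vanishing Serre duality on the hyper-K\"ahler fourfold $X$ gives $H^1(X, L^{\otp 2} \otimes M^{-1}) \cong H^3(X, L^{-2} \otimes M)^{\vee}$, and Lemma~\ref{lem:vanishing}, applied with $p = -2$, $q = 1$, $i = 3$, yields precisely this vanishing.

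To pin down $h^0(E, L\vert_E)$, I would use the $\P^1$-bundle presentation $E \cong \P_\Sigma(\cE^\vee)$ from Lemma~\ref{lem:EisP1bundle} together with the projection formula, which identifies $H^0(E, L\vert_E) \cong H^0(\Sigma, c_{E\,*}L) = H^0(\Sigma, \cE)$. By Lemma~\ref{lem:bundle}, $\cE$ is stable with Mukai vector $(2, H_\Sigma, 1)$, so Riemann--Roch on the K3 surface $\Sigma$ gives $\chi(\Sigma, \cE) = 3$; moreover $h^2(\Sigma, \cE) = h^0(\Sigma, \cE^{\vee}) = 0$ follows from stability, since $\cE^{\vee}$ has negative $H_\Sigma$-slope and hence admits no nonzero map from $\cO_\Sigma$. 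Combining these estimates gives $h^0(X, L) \geq h^0(\Sigma, \cE) \geq 3$, so case~(a) of Lemma~\ref{lem:H0large} is excluded and we land in case~(b): $k_L = 1$, $h^0(X, L) = 3$, and $L$ is globally generated. Equality of dimensions then forces the restriction map to be an isomorphism.

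The argument is essentially an assembly of Lemma~\ref{lem:EisP1bundle}, Lemma~\ref{lem:bundle}, Lemma~\ref{lem:vanishing}, and Lemma~\ref{lem:H0large}, glued together by Serre duality on $X$ and the explicit shape of $\Psef(X)$ in case~\ref{caseC2}. The only step that is not a direct citation is the $H^0$-vanishing via the effective cone, but this is immediate from the two-ray description of $\Psef(X)$, so no serious obstacle is expected; the proof should be short.
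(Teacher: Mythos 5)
Your proof is correct and follows essentially the same route as the paper: the ideal-sheaf sequence $0\to L^{\otp 2}\otimes M^{-1}\to L\to L\vert_E\to 0$, vanishing of $H^0$ and $H^1$ of $L^{\otp 2}\otimes M^{-1}$ (the paper gets both from Serre duality plus Lemma~\ref{lem:vanishing}, while you get the $H^0$ one from the shape of $\Psef(X)$ in case~\ref{caseC2} --- an equally valid, purely cosmetic difference), then the bound $h^0(E,L\vert_E)\ge \chi(\Sigma,\cE)=3$ via stability of $\cE$, and finally Lemma~\ref{lem:H0large}. Your stability argument killing $h^2(\Sigma,\cE)$ in fact spells out a step the paper leaves implicit in its ``by Riemann--Roch'' remark.
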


\begin{proof}
We consider again the exact sequence~\eqref{eq:Paris20211029}.\
By Lemma~\ref{lem:vanishing} and Serre duality, we get
\[
h^i(X,L (-E)) = h^i(X,L^{\otp 2}\otimes M^{-1})=h^{4-i}(X, L^{-2}\otimes M)=0,
\]
for $i\in\{0,1\}$, as we wanted.
The equality $h^0(E, L \vert_E)=3$ and the last statement follow since, by the Riemann--Roch Theorem, we have that $h^0(E, L \vert_E)\geq 3$, and then we apply Lemma~\ref{lem:H0large}.
\end{proof}

The moduli space $\M_0(\Sigma)$ was defined in Example~\ref{ex:K3n}.\
Let us denote by $E_0$ the exceptional divisor given by the universal family of curves in $\Sigma$ over $\P^2=|\hh |$.
Then $E_0$ is isomorphic to $ E$ over $\Sigma$ since they are projective bundles associated with  the same vector bundle $\cE$.

To finish the proof of Theorem~\ref{thm:MainThm} and Theorem~\ref{thm:SYZ}, we only need to show the following result, which can be seen as a version of~\cite[Theorem 1]{nag}.

\begin{prop}\label{thm:OGrady}
Let $(X,\lll,\mm)$ be a very general triple satisfying~\eqref{hypo} with $\mm$ isotropic and $\lll$ nef.\
Then $X$ is isomorphic to $\M_0(\Sigma)$.\
In particular, $X$ is of $\mathrm{K3}^{[2]}$ deformation type.
\end{prop}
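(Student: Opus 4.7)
The plan is to build a biregular isomorphism $X\cong \M_0(\Sigma)$ by matching the two Lagrangian fibrations over $\P^2=|\hh|$. The starting point is the observation that the smooth fibers $X_t$ of $f\colon X\to\P^2$ are principally polarized abelian surfaces with principal polarization $M\vert_{X_t}$ (since the integer $a$ from~\eqref{defaintro} equals $1$), and that for generic $t$ the theta divisor $\Theta_t\in |M\vert_{X_t}|$ coincides with $E\cap X_t$; this follows from $E\in |L^{-1}\otimes M|$ together with $L\vert_{X_t}\cong\cO_{X_t}$. Using the identity $(L\otimes M)\vert_E = c_E^*H_\Sigma$ and adjunction ($\Theta_t\vert_{\Theta_t}=K_{\Theta_t}$ on $X_t$), one finds $(c_E\vert_{\Theta_t})^*H_\Sigma = K_{\Theta_t}$, and a degree computation using $\NS(\Sigma)=\Z\hh$ (Lemma~\ref{lem:K3}) forces $c_E\vert_{\Theta_t}\colon\Theta_t\to\Sigma$ to be an embedding with image a smooth curve $C_t\in|\hh|$ (the equality of arithmetic genera forcing smoothness).

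This construction yields a morphism $\P^2\to|\hh|$, $t\mapsto[C_t]$, which I would identify with the canonical isomorphism $\P^2\cong\P(H^0(\Sigma,\cE)^\vee)=|\hh|$ arising from the data $E\cong E_0$ as $\P^1$-bundles over $\Sigma$: both Lagrangian fibrations restricted to the exceptional divisor, $f\vert_E\colon E\to\P^2$ and $f_0\vert_{E_0}\colon E_0\to|\hh|$, are induced by the same tautological line bundle $\cO_E(1)$ (whose pushforward to $\Sigma$ is $\cE$ by Lemma~\ref{lem:bundle}), hence are intertwined by the isomorphism $E\cong E_0$. Then over the open locus $U\subset\P^2$ where $C_t$ and $X_t$ are smooth, the principal polarization gives a canonical identification $X_t\cong\Pic^1(\Theta_t)\cong\Pic^1(C_t)=(\M_0(\Sigma))_{[C_t]}$. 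These fiberwise isomorphisms glue to a birational map $g\colon X\dra \M_0(\Sigma)$ over $\P^2=|\hh|$ via the representability of the relative Picard scheme, and by construction $g$ agrees with the identification $E\cong E_0$ along the exceptional divisor.

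Finally, I would upgrade $g$ to a biregular isomorphism. Since $g$ is defined on $f^{-1}(U)$ and on $E$ via the identification $E\cong E_0$, its indeterminacy locus has codimension at least $2$. By Proposition~\ref{lem:NefCone4folds} (applied to $X$) and its counterpart for $\M_0(\Sigma)$ (where $\Sigma$ is very general so $\M_0(\Sigma)$ also has Picard lattice $\Z\lll\oplus\Z\mm$ satisfying~\eqref{hypo}), neither space admits a nontrivial hyper-K\"ahler birational model, so the birational map $g$ must be biregular. The main obstacle I expect is handling the Lagrangian fibers over $\P^2\setminus U$, where the naive relative Jacobian description breaks down; for this I would appeal to Matsushita's flatness and equidimensionality of Lagrangian fibrations, combined with the properness of the relative compactified Jacobian of the universal curve over $|\hh|$, to see that the fiberwise identification of step two extends functorially across the singular locus. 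Once $g$ is shown biregular, Proposition~\ref{thm:OGrady} follows, and combined with the reductions at the start of Section~\ref{sec:OGrady} this completes the proofs of Theorem~\ref{thm:MainThm} and Theorem~\ref{thm:SYZ}.
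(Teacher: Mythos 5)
Your overall strategy mirrors the paper's own proof: identify $E$ with the universal family $E_0$ of curves in $\Sigma$ compatibly with the two fibrations over $\P^2=|\hh|$, identify the smooth fibres $X_t$ with $\Pic^1(C_t)$, and conclude that the resulting birational map is biregular because the nef and movable cones coincide (Proposition~\ref{lem:NefCone4folds}). Two smaller remarks first: the identification of $f\vert_E$ with the tautological projection $E_0\to|\hh|$, as well as the triviality of $L\vert_{X_t}$, is not a consequence of Lemma~\ref{lem:bundle} alone --- it needs $k_L=1$ and the bijectivity of $H^0(X,L)\to H^0(E,L\vert_E)$, i.e.\ Lemma~\ref{lem:k=1}, which you are using implicitly and should cite; and your worry about the fibres over $\P^2\setminus U$ is superfluous, since the cone argument only requires a birational map, so no extension across the singular fibres has to be constructed.

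The genuine gap is the gluing step. Knowing that each fibre $X_t$, $t\in U$, is canonically isomorphic to $\Pic^1(C_t)$ does not by itself produce a map over $U$, and ``representability of the relative Picard scheme'' does not supply one: to get a morphism $f^{-1}(U)\to\Pic^1(U_E/U)$ from the universal property you must exhibit a family of degree-$1$ line bundles on the curves $C_t$ parametrized by $f^{-1}(U)$, and no canonical such family is available because $f$ has no section --- every natural candidate involves translating by a point of the fibre, i.e.\ a choice of origin. This is precisely where the paper invokes Fujiki's relative Albanese \cite{fujiki}: the restriction $f_E\vert_{U_E}$ factors through $\Alb(U_E/U)$, giving a canonical proper morphism $g\colon \Alb(U_E/U)\to f^{-1}(U)$ over $U$ compatible with the inclusion of $U_E$; fibrewise this is a torsor map $\Pic^1(C_t)\to X_t$, an isomorphism because the smooth irreducible genus-$2$ curve $C_t$ induces a principal polarization, so $X_t$ is the Jacobian of $C_t$ (the product-of-elliptic-curves case being excluded). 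Since $\Alb(U_E/U)$ depends only on $U_E\to U$, it coincides with the corresponding open part of $\M_0(\Sigma)$, and the remainder of your argument goes through. So the plan is sound and essentially the paper's, but the family version of the fibrewise identification --- the heart of the construction --- is asserted rather than proved, and the missing ingredient is exactly the relative Albanese (or an equivalent canonical family of sheaves).
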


\begin{proof}
By Lemma~\ref{lem:k=1}, the morphism $f_E$ coincides with the composition
\[
E \isomlra E_0 \lhra \M_0(\Sigma) \xrightarrow{\ f_0\ } \P^2,
\]
and thus $E$ is isomorphic to the universal family of curves in $\Sigma$ over $\P^2=|\hh |$.
Let us consider the nonempty open subset $U\subset\P^2$ where all morphisms $f$, $f_0$ and their restrictions respectively to $E$ and $E_0$ are smooth.
Then the restriction $f_E \vert_{U_E}$ of $f_E$ to $U_E\coloneqq f_E^{-1}(U)$ will factor via the relative Albanese variety $\Alb (U_E/U)$ (see~\cite[Proposition 1]{fujiki}): there exists a proper morphism
\[
g\colon \Alb (U_E/U) \lra f^{-1}(U)
\]
over $U$ compatible with the inclusion of $U_E$.

For all points $u\in U$, the class of the curve $C_u\coloneqq f_E^{-1}(u)$ gives a principal polarization on the abelian surface $A_u\coloneqq f^{-1}(u)$.
Since $C_u$ lives in a very general K3 surface, $A_u$ cannot be isomorphic to the product of two elliptic curves.\
Hence, $A_u$ is isomorphic to the Jacobian of~$C_u$ and the principal polarization is the theta polarization.\
From this, we immediately deduce that $g$ is an isomorphism.\
But the relative Albanese variety only depends on $f_E$ and thus it is isomorphic to the relative Albanese variety of the universal family of curves in $\Sigma$ over $\P^2=|\hh |$.\
This gives a birational morphism $X \isomdra  \M_0(\Sigma)$ which is then an isomorphism, since the nef and movable cones coincide.
\end{proof}


\section{Further results}\label{sec:further}
We keep the same setup: $X$ is a \hk\ manifold of dimension $2n$ with classes $\lll,\mm\in H^2(X,\Z)$ such that $\int_X\lll^{2n}=0$ and
$$a\coloneqq \frac 1{n!} \int_X\lll^n\mm^n$$
is a nonzero integer (Lemma~\ref{lemmaa});  changing $\mm$ into $-\mm$ if necessary, we may assume that the integers~$q_X(\lll,\mm)$, hence also $a$, are positive.\ After dealing for most of   this article  with the case~$a=1$, we examine in this section the general case.

\subsection{Boundedness results}

We first prove a general boundedness result, assuming $n$ and $a$ are fixed (this is~\cite[Theorem~4.9]{kam}).

\begin{prop}\label{prop41}
Let $a$ be a fixed positive integer.\
The number of deformation types of \hk\ manifolds $X$ of fixed dimension $2n$ for which there are classes~$\lll,\mm\in H^2(X,\Z)$ such that $\int_X \lll^{2n}=0$ and $\int_X \lll^{n}\mm^{n}=an!$ is finite.
\end{prop}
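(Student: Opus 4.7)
The plan is to bound the Fujiki constant $c_X$ from the hypothesis, use this to constrain the Huybrechts--Riemann--Roch polynomial and the Chern numbers of $X$, and then invoke a finiteness result for hyper-K\"ahler manifolds with bounded topological invariants.

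First, I would invoke the polarized Fujiki relation~\eqref{defia}, which rewrites the hypothesis $\int_X \lll^n\mm^n = an!$ as
\[
c_X \cdot q_X(\lll,\mm)^n = a\cdot (2n-1)!!.
\]
Since $\lll,\mm \in H^2(X,\Z)$ and $a>0$, the integer $q_X(\lll,\mm)$ is strictly positive, so the positive rational $c_X$ is bounded above by $a(2n-1)!!$ and takes only finitely many values (each of the form $a(2n-1)!!/d^n$ with $d\in\Z_{\geq 1}$ and $d^n \leq a(2n-1)!!$). In particular, the leading coefficient $c_X/(2n)!$ of $P_{RR,X}(T)$ lies in a finite set.

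Next, I would combine this with the structural properties of $P_{RR,X}$: its constant term is $n+1$ (property~\ref{ita}), all its coefficients are positive by~\cite[Theorem~1.1]{jia}, and it takes integer values on all $q_X$-values of integral classes (Lemma~\ref{lem11}); moreover, by~\eqref{defP}, the shifted polynomial $k \mapsto P_{RR,X}(2kq_X(\lll,\mm)+q_X(\mm))$ is integer-valued of degree~$n$ with leading coefficient $a/n!$. Together with the Hirzebruch--Riemann--Roch identities expressing the coefficients of $P_{RR,X}$ as specific Chern characteristic integrals, these constraints should pin down $P_{RR,X}$ and hence the Chern numbers of $X$ to a finite set.

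Finally, to pass from bounded Chern numbers to finiteness of deformation types, I would use the surjectivity of the period map (\cite[Proposition~25.12]{huy}, \cite{HuyHK}) to deform $(X,\lll,\mm)$ so that both classes become of type $(1,1)$ and $X$ becomes projective; after normalizing $q_X(\mm)$ by replacing $\mm$ with $\mm + s\lll$ for a suitable $s\in\Z$, the big class $\lll + r\mm$ for a bounded $r\in\Z_{>0}$ has top self-intersection $c_X\cdot q_X(\lll+r\mm)^n$ uniformly bounded in terms of $a$ and $n$. A boundedness theorem for polarized projective hyper-K\"ahler manifolds of fixed dimension with bounded polarization degree then concludes the proof. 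The hard part will be this last invocation: such a boundedness statement is classical for the four currently known deformation classes via their explicit moduli descriptions, while in arbitrary dimension it rests on more recent work; here it is significantly simplified by the strong \emph{a priori} control obtained in the previous steps on the full Riemann--Roch polynomial and the Fujiki constant, which together essentially pin down the entire topological type of~$X$.
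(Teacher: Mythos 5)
Your overall strategy is the paper's: extract from \eqref{pol} the identity $c_Xq_X(\lll,\mm)^n=a(2n-1)!!$, bound the top self-intersection of a suitable positive combination of $\lll$ and $\mm$ in terms of $a$ and $n$, deform using the surjectivity of the period map, and conclude by Huybrechts's finiteness theorem \cite[Corollary~1.2]{huyf}. That theorem does exist in every dimension, so your hedging at the end is unnecessary; but two points in your write-up are genuine gaps.

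First, the middle step is both unjustified and, as you use it, misleading. Knowing the leading coefficient (bounded via $c_X$), the constant term $n+1$, positivity of the coefficients, and integrality of the values of $P_{RR,X}$ on $q_X$-values does \emph{not} confine $P_{RR,X}$, let alone the Chern numbers or ``the entire topological type'' of $X$, to a finite set: there is no a priori upper bound on the intermediate coefficients from these constraints alone. (This is precisely why the paper can only pin down such data in dimension $4$, where Guan's classification of Betti numbers enters; see Theorem~\ref{th43a}.) Since you invoke this claimed topological control to justify the final boundedness statement, the justification of your last step is defective as written. Second, the final step as you set it up applies a boundedness theorem for \emph{polarized} manifolds to a class that you only know to be \emph{big}: after deforming so that both $\lll$ and $\mm$ are of type $(1,1)$, the Picard rank is at least $2$ and $\lll+r\mm$ need not be ample. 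The paper's fix is exactly what your argument is missing: after normalizing $\mm$ so that $-q_X(\lll,\mm)<q_X(\mm)\le q_X(\lll,\mm)$, one has $\int_X(\lll+\mm)^{2n}\le a\,3^n(2n)!/(2^nn!)$, and one deforms to a manifold whose N\'eron--Severi group is generated by $\lll+\mm$ alone; then $\pm(\lll+\mm)$ is automatically ample and \cite[Corollary~1.2]{huyf} applies directly, with no input on the Riemann--Roch polynomial or Chern numbers needed. (A small additional remark: $q_X(\lll,\mm)$ is a priori only a half-integer, which does not affect the boundedness of $c_X$ but makes your phrase ``the integer $q_X(\lll,\mm)$'' inaccurate.)
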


\begin{proof}
As explained above, we may assume $q_X(\lll,\mm)>0$.\
Since we can always add to $\mm$ a  multiple of $\lll$ (which changes $q_X(\mm)$ by adding multiples of $2q_X(\lll,\mm)$ but neither $q_X(\lll,\mm)$ nor~$a$), we may further assume
\begin{equation}\label{eqq}
-q_X(\lll,\mm)<q_X(\mm)\le q_X(\lll,\mm).
\end{equation}

We have, using~\eqref{fuj}, \eqref{eqq}, and \eqref{defia},
\begin{equation*}
\int_X(\lll+\mm)^{2n}=c_X q_X(\lll+\mm)^n =c_X(2 q_X(\lll,\mm)+q_X(\mm) )^n\le c_X 3^n q_X(\lll,\mm)^n= a 3^n\frac{(2n)!}{2^nn!}  .
\end{equation*}
By the surjectivity of the period map, there exists a deformation of $X$ whose N\'eron--Severi group is generated by the class $\lll+\mm$, which is therefore ample (or antiample).\
One can then apply~\cite[Corollary~1.2]{huyf} to conclude.
\end{proof}

 In dimension~$4$, we improve this result by allowing $a$ to take infinitely many values.\

\begin{prop}\label{prop42}
The number of deformation types of  \hk\ fourfolds $X$   for which there are classes~$\lll, \mm\in H^2(X,\Z)$ such that   \mbox{$\int_X \lll^4=0$} and  the   integer $a= \frac{1}{2}\int_X \lll^{2}\mm^{2}$ is positive and square-free, is finite.\end{prop}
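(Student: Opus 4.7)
The plan is to reduce Proposition~\ref{prop42} to Proposition~\ref{prop41} by showing that, under the square-freeness hypothesis, the integer $a$ can take only finitely many values; applying Proposition~\ref{prop41} to each such $a$ will then yield the desired finiteness.

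The first step will use Guan's classification (\cite[Main Theorem]{gua}), as packaged in Lemma~\ref{lemmguan}: the Betti numbers of any \hk\ fourfold belong to a finite list, and consequently $c_4(X)$, hence also $A_X=\tfrac{1}{8}(7-c_4(X)/432)$, take only finitely many values. Explicitly, $A_X$ belongs to the finite subset $\{25/32\}\cup\{A\in[5/6,131/144]\mid 288A\in\Z\}$ of $\Q$.

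The second step will invoke Lemma~\ref{lemmax}, which states that whenever classes $\lll,\mm$ satisfying the hypotheses exist, $\sqrt{2aA_X}$ is rational. Equivalently, the class of $2aA_X$ is trivial in $\Q^\times/(\Q^\times)^2$, so $a$ and $2A_X$ represent the same element modulo squares. For each of the finitely many allowed values of $A_X$ produced in the first step, this element of $\Q^\times/(\Q^\times)^2$ is fixed. Since the map sending a positive square-free integer to its class in $\Q^\times/(\Q^\times)^2$ is injective (two positive square-free integers $a,b$ satisfy $ab\in(\Q^\times)^2$ only when $a=b$, by unique factorization), there is at most one square-free positive $a$ compatible with each value of $A_X$. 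Combining both steps, $a$ ranges over a finite set.

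The conclusion will then follow by applying Proposition~\ref{prop41} to each of these finitely many values of $a$ and taking the finite union of deformation types produced. There is no serious obstacle in this plan: the square-freeness assumption on $a$ is used exclusively to ensure that a single topological value of $A_X$ pins down $a$ uniquely, while the remaining ingredients (finiteness of Betti invariants, rationality of $\sqrt{2aA_X}$, and boundedness for fixed $a$) are already in place through Lemmas~\ref{lemmguan} and~\ref{lemmax} together with Proposition~\ref{prop41}.
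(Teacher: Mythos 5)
Your proposal is correct and takes essentially the same route as the paper: both combine Lemma~\ref{lemmguan} (which, together with the integrality of $288A_X$, leaves only finitely many possible values of $A_X$), Lemma~\ref{lemmax} (rationality of $\sqrt{2aA_X}$), and the square-freeness of $a$ to confine $a$ to a finite set, and then conclude by Proposition~\ref{prop41}. The only difference is cosmetic: the paper observes that the integer $288\,aA_X$ is a perfect square, so every prime dividing the square-free $a$ divides $288A_X\le 262$, whereas you note that each admissible value of $A_X$ fixes the square class of $a$ and hence the square-free $a$ itself --- an equivalent, in fact slightly sharper, way to finish.
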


\begin{proof}
The proposition is a consequence of Lemma~\ref{lemmax}, whose notation we keep.\ Indeed, that lemma and~\eqref{ax}
 say that the integer $aA'_X\coloneqq 12^2 \cdot 2aA_X$ is a perfect (positive) square.\
Since~$a$ is square-free and $A'_X\le 262$ by Lemma~\ref{lemmguan}, the integer $a$ is a product of (distinct) prime numbers that are all $<262$.\
It is therefore bounded (by the product of all these primes) and boundedness for $X$ follows from Proposition~\ref{prop41}.
\end{proof}

\subsection{Low values of $a$ for \hk\ fourfolds}

We obtain restrictions on the values that the integer $a$ may take for \hk\ fourfolds.\ The case $a=1$ was analyzed in Theorem~\ref{th43a} and is completely clarified by Theorem~\ref{thm:MainThm}:   it is only realized by \hk\ manifolds of~$\mathrm{K3}^{[2]}$ deformation type.

\begin{theo}\label{th43}
Let $X$ be a  \hk\ fourfold with classes~$\lll,\mm\in H^2(X,\Z)$ such that \mbox{$\int_X \lll^4=0$}.\   Assume $a\coloneqq \frac{1}{2}\int_X \lll^{2}\mm^{2}\in \{2,\dots,8\}$.\
 We are in one of the following cases:
\begin{enumerate}[{\rm (a)}]
 \item\label{enum:th43a} either $a=3$,
 $q_X(\lll,\mm)=1$, $c_X=9$,     $P_{RR,X}(T)= 3\binom{\frac{T}2+2}{2}$, and the quadratic form $q_X$ is even; moreover,
\subitem-- either  $(b_2(X),b_3(X),b_4(X))=(7,8, 108)${\rm ;}
 \subitem-- or $(b_2(X),b_3(X),b_4(X))=(6,4, 102)${\rm ;}
\subitem-- or $(b_2(X),b_3(X),b_4(X))=(5,0, 96)$.
\item\label{enum:th43b} or $a=4$, the Chern and Hodge numbers of $X$ are those of the Hilbert square of a K3 surface, and
\subitem-- either $q_X(\lll,\mm)=2$, $c_X=3$, the form $q_X$ is even, and $P_{RR,X}(T)=\binom{\frac{T}2+3}{2}${\rm ;}
\subitem-- or $q_X(\lll,\mm)=1$, $c_X=12$, and $P_{RR,X}(T)=\binom{T+3}{2}$.
 \end{enumerate}
\end{theo}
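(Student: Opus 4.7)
The plan is to mimic the proof of Theorem~\ref{th43a} while tracking the dependence on $a$. After replacing $\mm$ by $\mm+r\lll$ for a suitable integer $r$ and possibly by $-\mm$, we may assume $q_X(\lll,\mm)>0$ and $\gamma\coloneqq q_X(\mm)/q_X(\lll,\mm)\in(-1,1]$. Writing $\delta\coloneqq\sqrt{2aA_X}$, formula~\eqref{prr} gives
\[
P(k)\coloneqq P_{RR,X}\bigl(q_X(k\lll+\mm)\bigr)=\frac{a}{2}k^2+\Bigl(\frac{a\gamma}{2}+2\delta\Bigr)k+\Bigl(\frac{a\gamma^2}{8}+\gamma\delta+3\Bigr).
\]
Since $\delta\in\Q$ by Lemma~\ref{lemmax} and $P$ takes integer values on $\Z$, we obtain two integrality constraints: $(\mathrm{i})$ $\tfrac{a(1+\gamma)}{2}+2\delta\in\Z$ (from $P(1)-P(0)\in\Z$), and $(\mathrm{ii})$ $\tfrac{a\gamma^2}{8}+\gamma\delta\in\Z$ (from $P(0)-3\in\Z$).

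The first step is to solve, for each $a\in\{2,\ldots,8\}$, the rationality of $\delta$ under the constraints of Lemma~\ref{lemmguan}: either $A_X=25/32$, or $288A_X$ is an integer in $[240,262]$. Requiring $2aA_X$ to be the square of a rational is an elementary Diophantine check that cuts the admissible $A_X$ down to at most two possibilities per $a$. The second step is to test conditions $(\mathrm{i})$ and $(\mathrm{ii})$ on the finitely many $\gamma\in(-1,1]$ whose denominator divides that of $2\delta$. A direct computation then shows that $a\in\{2,5,6,7,8\}$ are all ruled out, as is the non-$\mathrm{K3}^{[2]}$ candidate $A_X=8/9$ for $a=4$; for $a=3$ the only survivor is $A_X=27/32$ with $\gamma=0$ and $\delta=9/4$, while for $a=4$ the only survivor is $A_X=25/32$ with $\gamma\in\{0,1\}$ and $\delta=5/2$.

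Once $\delta$ and $\gamma$ are pinned down, the polynomial $P_{RR,X}(T)=\tfrac{a}{8}(T/q')^2+\delta(T/q')+3$, with $q'\coloneqq q_X(\lll,\mm)$, is determined up to the value of $q'$. Applying Lemma~\ref{star} with a suitable triple $(c,c',q)$ restricts $q'$ to a small set and gives the parity of $q_X$: for $a=3$, choosing $c=2$, $c'=1$, $q=2q'$ forces $q'=1$, $c_X=3a/q'^2=9$, $q_X$ even, and $P_{RR,X}(T)=3\binom{T/2+2}{2}$; for $a=4$ a similar analysis yields $q'\in\{1,2\}$ with respective pairs $(c_X,P_{RR,X}(T))$ equal to $\bigl(12,\binom{T+3}{2}\bigr)$ and $\bigl(3,\binom{T/2+3}{2}\bigr)$. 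The Chern and Hodge data then follow from~\eqref{ax}: for $a=3$ we have $c_4(X)=108$, hence $4b_2-b_3=20$ via the relation $c_4(X)=3(4b_2+16-b_3)$, and Guan's classification for $b_2\le 8$ (\cite{gua}) yields exactly the three Betti profiles listed in~\ref{enum:th43a}, with $b_4$ recovered from $\chi(X)=c_4(X)$; for $a=4$ both surviving subcases have $A_X=25/32$, whence $c_4(X)=324$ and the Chern/Hodge numbers are those of $\mathrm{K3}^{[2]}$.

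The main obstacle is the arithmetic bookkeeping in step two: for each candidate $(a,A_X)$ one must enumerate the finitely many rational $\gamma\in(-1,1]$ satisfying $(\mathrm{i})$ and then eliminate them using $(\mathrm{ii})$, while respecting the normalization $\gamma\in(-1,1]$ throughout (the shifts $\mm\mapsto\mm+r\lll$ move $\gamma$ by even integers, so $\gamma=-1$ and $\gamma=1$ are not equivalent in this setup). A subtler point is the choice of $(c,c',q)$ in Lemma~\ref{star} when $a$ is even: for $a=4$ the naive choice $q=2q'$ produces a polynomial whose monic renormalization has a non-integer middle coefficient, and one must instead take $q=q'$, $c=2$, $c'=1$; without this distinction the two subcases $q'=1$ and $q'=2$, with distinct Fujiki constants $12$ and $3$, would be conflated.
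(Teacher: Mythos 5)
Your proposal is correct and is essentially the paper's own proof: it uses the same ingredients (Lemma~\ref{lemmax}, Lemma~\ref{lemmguan}, the integrality of $P(k)$ with the normalization $\gamma\in(-1,1]$, Lemma~\ref{star} with exactly the same choices of $(c,c',q)$ for $a=3$ and $a=4$, and then \eqref{ax} together with \cite{gua} for the Chern/Betti data), the only difference being organizational --- you shortlist $A_X$ via the rationality of $\sqrt{2aA_X}$ first and then eliminate with the two integrality constraints, whereas the paper first combines those constraints into $4A_X-\frac{b^2}{2a}\in\Z$ to pin down $A_X$ and invokes rationality afterwards; both are equivalent finite arithmetic checks and your claimed outcomes (only $A_X=\frac{27}{32}$, $\gamma=0$ for $a=3$; only $A_X=\frac{25}{32}$, $\gamma\in\{0,1\}$ for $a=4$; all other $a$ eliminated) do verify. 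One harmless slip: since the shifts $\mm\mapsto\mm+r\lll$ move $\gamma$ by even integers, $\gamma=-1$ and $\gamma=1$ \emph{are} equivalent (which is exactly why the half-open interval $(-1,1]$ is a fundamental domain), but this does not affect the argument.
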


\begin{proof}
We follow the proof of Theorem~\ref{th43a}, which dealt with the case $a=1$:  we may assume  $\gamma\coloneqq \frac{q_X(\mm)}{q_X(\lll,\mm)}\in (-1,1] $, we introduce the polynomial
$$P(k)\coloneqq P_{RR,X}(q_X(k\lll+ \mm))=P_{RR,X}(2kq_X(\lll,\mm)+q_X(\mm)),$$
and we compute
\begin{align*}
 P(k)&=\frac{a}{2}k^2+\Bigl( \frac{a}{2}\gamma+2\sqrt{2aA_X} \Bigr)k+\frac{a}{8}\gamma^2+ \gamma\sqrt{2aA_X}+3\\
 &=:\frac{a}{2}k^2 +  b  k +c.
\end{align*}
Since $P$ takes integral values on integers, $\frac{a}2+b=P(1)-P(0)$ and $c=P(0)$ are integers; when~$a$ is odd, we write $b=\frac12+b'$, with $b'\in\Z$.\ We also check
 \begin{equation}\label{abc}
    4A_X-\frac{b^2}{2a} =3-c \in\Z.
\end{equation}
 Finally, when $a$ is not a perfect square, the fact that $\sqrt{2aA_X}$ is rational (Lemma~\ref{lemmax}), implies $A_X\ne\frac{25}{32}$ hence, by Lemma~\ref{lemmguan}\ref{bax},    $\frac56\le A_X\le\frac{131}{144}$.

\noindent{\bf Case $a=2$.} By~\eqref{abc}, we  have $4A_X-\frac{b^2}4\in\Z$ and $b\in\Z$.\ By Lemma~\ref{lemmguan}, this case does not happen.

\medskip
\noindent{\bf Case $a=3$.} By~\eqref{abc}, we  have $4A_X- \frac{b^2}6=4A_X-\frac{b'(b'+1)}6-\frac1{24}\in\Z$, hence $4A_X -\frac1{24}\in \Z+\{0,\frac13\}$.\
Since $a$ is not a perfect square, we have (as noted above) $\frac{5}{6}\le A_X\le\frac{131}{144} $, hence
\[
\frac{10}{3} -\frac1{24} \le 4A_X -\frac1{24}\le\frac{131}{36} -\frac1{24}.
\]
The only possibility is  $4A_X-\frac1{24}=3+\frac{1}3$, that is, $A_X=\frac{27}{32}$.\
This implies
$$\frac12+b'=b=\frac{3}{2}\gamma+\frac92,$$
so that $\gamma$ is an even integer.\ As above, this implies $\gamma=q_X(\mm)=0$, $b=\frac92$, and $c = 3$.\ Furthermore,
\[
P_{RR,X}(2kq_X(\lll,\mm))=  \frac{3}{2}k^2 +  \frac92  k +3=3\binom{k+2}{2}.
\]
By Lemma~\ref{star} (applied with $c=2$, $c'=3$, and $q=2q_X(\lll,\mm)$),
we get $q_X(\lll,\mm)=1$, the form~$q_X$ is even, $c_X=9$, and $P_{RR,X}(T)=   3\binom{\frac{T}2+2}{2}$, as in the generalized Kummer case.

From~\eqref{ax}, we obtain $c_4(X)=108$ and $4b_2(X)-b_3(X)=20 $.\
The possible Betti numbers listed in the theorem then follow from \cite{gua}.

\medskip
\noindent{\bf Case $a=4$.} By~\eqref{abc}, we   have $4A_X-\frac{b^2}8\in\Z$ and $b\in\Z$.\
If $\frac{5}{6}\le A_X\le\frac{131}{144} $, we have $b\equiv 2\pmod4$ and $A_X=\frac78$, which contradicts the fact that $\sqrt{2aA_X}$ is   rational.\
Hence, by Lemma~\ref{lemmguan}, we have $A_X=\frac{25}{32}$ and $b^2\equiv 1\pmod8$, so that $b$ is odd.\ Furthermore, $b=2\gamma +5$, hence $\gamma$ is an integer, which can only be $0$ or $1$.

When $\gamma=q_X( \mm)=0$ and $b=5$, we
obtain $c=3$ and
\[
P_{RR,X}(2kq_X(\lll,\mm))=P(k)= 2k^2 + 5  k +3=\binom{2k+3}{2}.
\]
When
$\gamma=1$ and $b=7$, we   get $q_X(\lll,\mm)=q_X(\mm)$ and $c=6$, and
\[
P_{RR,X}((2k+1)q_X(\lll,\mm))=P(k)= 2k^2 +7  k +6=\binom{(2k+1)+3}{2}.
\]
 In both cases, we have
 $$
 P_{RR,X}(q_X(\lll,\mm)T)=\binom{T+3}{2}.
 $$
 By Lemma~\ref{star} (applied with $c=2$, $c'=1$, and $q=q_X(\lll,\mm)$),
we get $q_X(\lll,\mm)\in\{1,2\}$.

Finally, since $A_X=\frac{25}{32}$, Lemma~\ref{lemmguan} implies that  the Chern and Hodge numbers of $X$ are those of the Hilbert square of a K3 surface.

\medskip
\noindent{\bf Case $a=5$.} We   obtain $4A_X-\frac{b^2}{10}\in\Z$ and $b\in  \Z +\frac12$ and, since $a$ is not a perfect square, $\frac{5}{6}\le A_X<1 $.\
We get $A_X=\frac{29}{32} $, which contradicts  $\sqrt{2aA_X}\in\Q$.

\medskip
\noindent{\bf Case $a=6$.} We   obtain $4A_X-\frac{b^2}{12}\in\Z$ and $A_X\in\{\frac56,\frac{15}{16}\}$, which contradicts  $\sqrt{2aA_X}\in\Q$.

\medskip
\noindent{\bf Case $a=7$.} We   obtain $4A_X-\frac{b^2}{14}\in\Z$ and $A_X\in\{\frac{193}{224},\frac{217}{224}\}$, which contradicts  $\sqrt{2aA_X}\in\Q$.

\medskip
\noindent{\bf Case $a=8$.} We   obtain $4A_X-\frac{b^2}{16}\in\Z$ and $A_X=\frac{57}{64}$, which contradicts  $\sqrt{2aA_X}\in\Q$.
\end{proof}

As we saw in Example~\ref{ex:Kumn},  the first item of the case $a=3$ in Theorem~\ref{th43} is realized by \hk\ fourfolds  of generalized Kummer deformation type;  we do not know whether the other two items occur.\
In the case $a=4$, the first item is realized  on \hk\ fourfolds of $\mathrm{K3}^{[2]}$ deformation type with $\mm$ divisible by 2.

Under a condition on some generalized Fujiki constants of $X$, Beckmann and Song prove in~\cite{bs} (see also~\cite{saw2}) that the only possible Betti numbers $b_2(X)$, $b_3(X)$, and $b_4(X)$ for $X$ are as in case~\ref{enum:th43a} of Theorem~\ref{th43}.\ 
Moreover, by~\cite[Proposition 5.6]{bs}, \cite[Conjecture 1.2]{bs} would imply that the Fujiki constant $c_X$ is either~$3$ or~$9$.\ In particular, the second case in~\ref{enum:th43b} should not occur.


\end{document}